\documentclass[11pt,reqno]{amsart}
\usepackage[T1]{fontenc}
\usepackage{lmodern}
\usepackage[margin=1in,headheight=14pt]{geometry}
\usepackage{amsmath,amssymb,mathtools,mathrsfs}
\usepackage{booktabs,array,longtable}
\usepackage{enumitem}
\usepackage{microtype,needspace}
\numberwithin{equation}{section}
\newtheorem{theorem}{Theorem}[section]
\newtheorem{proposition}[theorem]{Proposition}
\newtheorem{lemma}[theorem]{Lemma}
\newtheorem{corollary}[theorem]{Corollary}
\theoremstyle{definition}
\newtheorem{definition}[theorem]{Definition}
\newtheorem{example}[theorem]{Example}
\theoremstyle{remark}
\newtheorem{remark}[theorem]{Remark}
\setlist[enumerate,1]{label=\textup{(\roman*)},leftmargin=2.5em,itemsep=3pt,topsep=5pt}
\allowdisplaybreaks[2]
\title[Rational equivalence closures and Lawson homology]{Topology of rational equivalence closures and Lawson homology}
\author{Wenchuan Hu}
\address{School of Mathematics, Sichuan University, Chengdu 610064, P. R. China}
\email{huwenchuan@gmail.com}
\email{wenchuan@scu.edu.cn}
\date{\today}
\subjclass[2020]{14C25, 14C30, 55Q52, 55R10}
\keywords{Rational equivalence, Chow topology, Lawson homology, Abel--Jacobi maps, integral period lattices}
\begin{document}
\raggedbottom
\begin{abstract}
Rational equivalence need not be closed in the Chow topology, and its closure can have infinitely generated homotopy groups. For a connected reduced complex projective variety, we prove that this closure is locally contractible and that its quotient in the algebraically trivial cycles is a finite or countable weak product of circles. The quotient map admits a global continuous section. Consequently, the higher homotopy of the cycle space is carried by the closure and is canonically identified with Lawson homology. We describe the remaining fundamental group quotient and its integral Walker comparison. Applications include smooth projective examples with infinite rank closure homotopy throughout prescribed finite intervals and a family of abelian threefolds with four simultaneously dense, nonconstructible quotient rank strata.
\end{abstract}
\maketitle

\section{Introduction}
Rational equivalence is the relation defining Chow groups and the intersection theory of algebraic cycles. Its behavior in the topology of cycle spaces raises a different question: which cycles can be approximated by rationally trivial cycles, and what topology is carried by their closure? The passage to the closure need not preserve rational equivalence, and neither the resulting subgroup nor its homotopy groups are determined by the abstract definition of a Chow group. Understanding this passage is therefore part of understanding the topology of algebraic cycles, rather than merely a separation convention for their quotient.

A precise motivation appears in Lima-Filho's discussion of rational equivalence closures \cite[Section 3.5.3, pp.~104--106]{LF04}. In the \emph{Speculations and remarks} on pp.~105--106, he proposes a filtration with smooth successive quotients and suggests that the more complicated homotopy of the cycle group should be concentrated in the closure of rational equivalence. This is a proposed picture, not a theorem about the topology of that closure. It leads to concrete questions about local contractibility, higher homotopy, and the actual Hausdorff quotient. These questions cannot be answered simply by observing that the subgroup is closed: closed subgroups of locally contractible groups need not themselves be locally contractible.

We prove a structure theorem in the actual Chow topology that makes this concentration of higher homotopy precise. The quotient is a weak product of circles, whereas the closure retains all homotopy groups of the cycle space in degrees at least two. Local contractibility does not imply finite generation of the homotopy groups of the closure. Indeed, the examples constructed below have infinite rank homotopy in any prescribed finite interval of degrees at least two. At the same time, classical examples have finitely generated homotopy groups. Thus the assertion is not that every closure has infinite rank homotopy, but that its homotopy groups can be infinitely generated even though the quotient is aspherical.

\medskip
\noindent\textit{Conventions.}
All varieties are reduced and defined over $\mathbb{C}$. Unless stated otherwise, $X$ is smooth, connected, and projective of dimension $n\geq1$; each structural result that permits singular, reducible, or nonequidimensional varieties is stated in that generality. We use ordinary dimension indexed Chow groups on singular varieties, including cycles supported in the singular locus. The integer $p$ denotes cycle dimension, $m$ denotes homotopy degree, and $k=2p+m$ denotes Lawson homological degree. For smooth $X$, write $r=n-p$ for codimension.

All homotopy groups are based at the zero cycle, and singular homology has integral coefficients unless another coefficient group is displayed. For an abelian group $B$, set $B_{\mathbb{Q}}=B\otimes_{\mathbb{Z}}\mathbb{Q}$, let $B_{\mathrm{tors}}$ be its torsion subgroup, and write $B_{\mathrm{free}}=B/B_{\mathrm{tors}}$ and $\operatorname{rank}B=\dim_{\mathbb{Q}}B_{\mathbb{Q}}$. We set $H_j(-,\mathbb{Z})=0$ for $j<0$. Write $\mathbb{T}=\mathbb{R}/\mathbb{Z}$, with identity $0$. For a topological group, $\pi_0=0$ means path connectedness. We denote homotopy equivalence by $\simeq$ and use $K(B,j)$ for an Eilenberg--Mac Lane space. All asserted product decompositions are specified as group isomorphisms, homeomorphisms, or weak homotopy equivalences; these assertions are not interchangeable.

For smooth projective $X$, put $b_j(X)=\dim_{\mathbb{Q}}H_j(X,\mathbb{Q})$, $h^{a,b}(X)=\dim_{\mathbb{C}}H^{a,b}(X)$, $q(X)=h^{1,0}(X)$, and $p_g(X)=h^{n,0}(X)$. The notation $\operatorname{PD}$ denotes integral Poincar\'e duality, and
\[
 F^aH^j(X,\mathbb{C})=\bigoplus_{u\geq a}H^{u,j-u}(X)
\]
is the Hodge filtration. For groups that need not be finitely generated, we use filtered colimits of integral mixed Hodge structures, rather than finite type mixed Hodge structures.

Let $Z_p(X)$ be the topological group of algebraic $p$-cycles with the Chow topology, and put
\begin{equation}\label{eq:groups}
 G_p(X)=Z_p(X)_{\mathrm{alg}},\qquad R_p(X)=Z_p(X)_{\mathrm{rat}},\qquad
 \operatorname{CR}_p(X)=\overline{R_p(X)}\subset Z_p(X).
\end{equation}
The subscripts $\mathrm{alg}$ and $\mathrm{rat}$ mean algebraic and rational equivalence to zero, respectively. We use the dimension indexed Chow group $\operatorname{CH}_p(X)=Z_p(X)/R_p(X)$; for smooth $X$, $\operatorname{CH}^{r}(X)=\operatorname{CH}_{n-r}(X)$. The bar always means closure in the actual Chow topology. The closure carries its induced topology, not a topology defined by parameter spaces of rational equivalences. We distinguish cycles from their Chow classes by writing
\begin{equation}\label{eq:abstract}
 A_p(X)=\operatorname{CH}_p(X)_{\mathrm{alg}}=G_p(X)/R_p(X),\qquad
 D_p(X)=\operatorname{CR}_p(X)/R_p(X)\subset A_p(X).
\end{equation}
Unless a quotient topology is explicitly mentioned, $A_p(X)$ and $D_p(X)$ are regarded as abstract groups. For a connected projective parameter variety $T$, the notation $Z_0(T)^0$ means its degree zero cycle subgroup. In contrast,
\begin{equation}\label{eq:quotient}
 Q_p(X)=G_p(X)/\operatorname{CR}_p(X),\qquad q_p:G_p(X)\longrightarrow Q_p(X)
\end{equation}
is a Hausdorff topological group. Abstractly, $Q_p(X)=A_p(X)/D_p(X)$. It is the maximal Hausdorff quotient of the algebraically trivial part of the topological Chow group. Lawson homology is
\begin{equation}\label{eq:lawson}
 L_pH_{2p+m}(X)=\pi_mZ_p(X)\quad(m\geq0).
\end{equation}
For $m>0$, the cycle space in this formula may be replaced by $G_p(X)$. Let $\Phi_{p,k}:L_pH_k(X)\to H_k(X,\mathbb{Z})$ be the natural cycle map, and put $L_pH_k(X)_{\mathrm{hom}}=\ker\Phi_{p,k}$. Likewise, $Z_p(X)_{\mathrm{hom}}$ denotes the subgroup of cycles with zero integral homology class.

For smooth projective $X$, continuity of the classical Abel--Jacobi map on homologically trivial cycles \cite[Proposition 3.5.12(iii), p.~105]{LF04}, restricted to $G_p(X)$, gives
\[
 R_p(X)\subset\operatorname{CR}_p(X)
 \subset\ker\bigl(\operatorname{AJ}|_{G_p(X)}\bigr).
\]
Lima-Filho's codimension indexed notation $Z^a(X)$ corresponds to $Z_{n-a}(X)$ here. The second inclusion can be strict, as explained in Section~\ref{sec:invariants}. We do not construct the entire filtration proposed in \cite{LF04}. Unless specified otherwise, every classical Abel--Jacobi kernel in this paper is taken on $G_p(X)$, not on all homologically trivial cycles.
Define
\begin{equation}\label{eq:lambda}
 \Lambda_p(X)=\pi_1Q_p(X),\qquad 0\leq p\leq n.
\end{equation}
We call these groups quotient lattices; the term permits countably infinite rank. They are defined through the Hausdorff Chow quotients. At the classical indices they recover familiar integral lattices; the general comparison involves an additional free abelian group. The top index is included: $G_n=R_n=\operatorname{CR}_n=Q_n=\Lambda_n=0$.

\begin{theorem}\label{thm:main}
Let $X$ be a connected reduced complex projective variety of dimension $n\geq1$, and let $0\leq p<n$.
\begin{enumerate}
\item There are compact connected real tori $\mathcal{A}_d\subset \mathcal{A}_{d+1}\subset Q_p(X)$, with closed subgroup inclusions, such that
\[
 Q_p(X)=\bigcup_{d\geq1}\mathcal{A}_d
\]
with the final topology of this exhaustion. Noncanonically, as topological groups,
\begin{equation}\label{eq:weakproduct}
 Q_p(X)\cong\mathop{\bigoplus}\nolimits_{j\in I}\mathbb{T},
\end{equation}
where $I$ is finite or countable and the right-hand side has finite support and the final topology of its finite coordinate tori. In particular, $\pi_mQ_p(X)=0$ for $m\geq2$.
\item The quotient map $q_p$ is a numerable principal $\operatorname{CR}_p(X)$-bundle and a Hurewicz fibration. The group $\operatorname{CR}_p(X)$ is path connected, locally contractible, and of CW homotopy type. Moreover, $q_p$ admits a global continuous section through zero, yielding a noncanonical homeomorphism of spaces
\begin{equation}\label{eq:globalproduct}
 G_p(X)\cong\operatorname{CR}_p(X)\times Q_p(X)
\end{equation}
over $Q_p(X)$. Unless $Q_p(X)=0$, no section can be a group homomorphism.
\item The group
\[
 \Lambda_p(X)=\operatorname*{colim}_d H_1(\mathcal{A}_d,\mathbb{Z})
\]
is countable free abelian. There is a canonical exact sequence
\begin{equation}\label{eq:exact}
 0\longrightarrow\pi_1\operatorname{CR}_p(X)\longrightarrow L_pH_{2p+1}(X)
 \xrightarrow{\ \kappa_p\ }\Lambda_p(X)\longrightarrow0,
 \qquad\kappa_p=(q_p)_*,
\end{equation}
and inclusion induces canonical isomorphisms
\begin{equation}\label{eq:higher}
 \pi_m\operatorname{CR}_p(X)\xrightarrow{\ \sim\ } L_pH_{2p+m}(X)\qquad(m\geq2).
\end{equation}
\end{enumerate}
\end{theorem}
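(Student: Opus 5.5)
The strategy is to build an exhaustion of $G_p(X)$ by images of Jacobians of smooth projective curves, and to read off the structure of $Q_p(X)$ from the resulting quotient tori.

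\textbf{Step 1: curve parametrizations.} Every algebraically trivial cycle is a difference $\Gamma_*(a-b)$ for a correspondence $\Gamma\subset C\times X$ from a smooth projective curve $C$. One can therefore choose an increasing sequence of pairs $(C_d,\Gamma_d)$, for instance by taking products of curves covering the finitely many Chow varieties of bounded degree. For each $d$ take the continuous homomorphism
\[
f_d\colon Z_0(C_d)^0\to G_p(X),
\]
which factors through $J(C_d)$ on the rationally trivial part. Its images $K_d$ form compact subsets whose union is $G_p(X)$. Since the Chow topology is the colimit of the bounded degree pieces, $G_p(X)$ carries the final topology of this exhaustion. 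Composing with $q_p$, the image $\mathcal{A}_d=q_p(f_d(J(C_d)))$ is a quotient of the compact connected torus $J(C_d)$ by the closed subgroup $f_d^{-1}(\operatorname{CR}_p)$. That subgroup contains $f_d^{-1}(R_p)$, which is a countable union of abelian subvarieties, so its closure has identity component a real subtorus. Hence $\mathcal{A}_d$ is a compact connected real torus. Each inclusion $\mathcal{A}_d\subset\mathcal{A}_{d+1}$ is a continuous injection of compact Hausdorff groups, hence a closed embedding.

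\textbf{Step 2: the weak product and the section.} An increasing union of tori with closed subgroup inclusions splits. Choose complementary subtori $\mathcal{A}_{d+1}=\mathcal{A}_d\times\mathcal{B}_{d+1}$, which is possible because closed connected subgroups of tori are direct factors. This identifies the colimit with $\bigoplus_{j\in I}\mathbb{T}$. Two points need checking here: that the final topology agrees with the quotient topology from $G_p(X)$, using compactness of the $K_d$ and Hausdorffness, and that the splittings are topological isomorphisms. With those, part (i) follows, and $\pi_m$ vanishes for $m\geq2$ by compactness of spheres.

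For the section, choose continuous local lifts of the coordinate circles. Each $\mathbb{T}$-factor lifts along the torus quotient $J(C_d)\to\mathcal{A}_d$ via a linear path in the universal cover, giving a map $[0,1)\to G_p$. This is not a loop, and fixing that is exactly the obstruction behind the failure of a homomorphic section. Instead I would construct $s$ inductively on the finite subproducts. On $\mathcal{A}_d$, the restriction $q_p\colon q_p^{-1}(\mathcal{A}_d)\cap(\text{compact piece})\to\mathcal{A}_d$ is a principal bundle whose fiber is locally contractible. The fiber is also path connected, because $R_p$ is path connected (it is generated by images of $\mathbb{P}^1$-families) and dense. With a CW structure on the torus, obstruction theory extends sections cell by cell, provided the fiber has vanishing homotopy in the relevant sense. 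Alternatively, the principal bundle over $\mathcal{A}_d$ is classified by a map $\mathcal{A}_d\to B\operatorname{CR}_p$. This map is trivial because the bundle is pulled back from the bundle $J(C_d)\to\mathcal{A}_d$, and the latter becomes trivial after lifting through $\mathbb{R}^k\to\mathcal{A}_d$. Making this compatible over $d$ and continuous in the colimit topology is the main obstacle. The first thing I would try is to use the local triviality of $J(C_d)\to \mathcal{A}_d$, namely a linear complement $V$ to the kernel's Lie algebra. Then $\exp(V)$ gives a section over a neighborhood, and a partition of unity together with path connectedness of $\operatorname{CR}_p$ patches these sections. Local contractibility of $\operatorname{CR}_p$ is inherited from $G_p$, which is locally contractible as a colimit of compact ANR pieces: a neighborhood of $0$ in $G_p$ is $\operatorname{CR}_p$-neighborhood times torus-neighborhood via the local section, and a retraction onto the first factor gives it. CW homotopy type then follows from Milnor's theorem applied to the ANR-type pieces. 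The product homeomorphism $(c,t)\mapsto c+s(t)$ follows. A homomorphic section would embed $\mathbb{T}$ as a subgroup splitting off, but then $\kappa_p$ would have to split compatibly with torsion-free $\pi_1$. More simply, $G_p$ has no nontrivial compact subgroups mapping onto a circle, since it is free abelian.

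\textbf{Step 3: homotopy.} The long exact sequence of the Hurewicz fibration $\operatorname{CR}_p\to G_p\to Q_p$, together with $\pi_mQ_p=0$ for $m\geq2$ and $\pi_0\operatorname{CR}_p=0$, gives the exact sequence \eqref{eq:exact} and the isomorphisms \eqref{eq:higher}. Finally, $\pi_1 Q_p=\operatorname{colim}_d H_1(\mathcal{A}_d)$ because compact maps factor through a stage. Each $H_1(\mathcal{A}_d)$ is a direct summand of the next, so the colimit is a countable free abelian group.
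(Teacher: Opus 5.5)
Your overall architecture matches the paper's: exhaust $Q_p(X)$ by compact tori, split into circles, construct sections, then use the fibration sequence. However, two load-bearing steps are missing.

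\textbf{Step 1 does not produce the exhaustion.} If the $C_d$ are curves, the union of the images of the $f_d$ is not $G_p(X)$. Take $p=0$ and $\dim X\geq2$: every such cycle is supported on a countable union of curves, and by Baire $X$ is not such a union. If instead you use higher-dimensional parameter spaces, such as products of curves, Abel's theorem no longer gives the factorization through a compact torus.

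What is actually needed is that each bounded stage $q_p(K_d\cap G_p(X))$ lies in a single compact torus. The paper obtains this in three steps:
\begin{enumerate}
\item It resolves the finitely many Chow components in $K_d$ to smooth $T_V$.
\item It shows that $q_p\Gamma_{f_V}$ factors through $\operatorname{Alb}(T_V)$ (Lemma~\ref{lem:familyfactor}).
\item That factorization rests on $\ker a_T=\overline{R_0(T)}$ (Proposition~\ref{prop:zero-closure}). This is proved by approximating an Albanese-trivial cycle on a curve $D\subset T$ through torsion points of $\ker(J(D)\to\operatorname{Alb}(T))$, lifting them by Abel-map local sections, and killing the resulting torsion classes with Roitman's theorem.
\end{enumerate}
Nothing in your proposal substitutes for this input. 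Without it you know neither that $Q_p(X)$ is a countable union of compact tori nor that its final topology is the quotient topology.

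\textbf{None of your mechanisms for the global section works.}
\begin{enumerate}
\item Cellular obstruction theory meets obstructions in $H^{k+1}(\mathcal{A}_d;\pi_k\operatorname{CR}_p(X))$, and $\pi_k\operatorname{CR}_p(X)=L_pH_{2p+k}(X)$ is typically nonzero.
\item Pulling back to $\mathbb{R}^k$ says nothing about triviality over $\mathcal{A}_d$.
\item Partitions of unity cannot patch sections with a noncontractible fiber. A principal circle bundle over $\mathbb{T}^2$ with nonzero Chern class has path-connected fiber and no section.
\end{enumerate}
The missing idea is to use the group law, as in Lemma~\ref{lem:global-section}. Lift the coordinate loop $\iota_j$ to a path from $0$ in $G_p(X)$; its endpoint $h_j$ lies in $\operatorname{CR}_p(X)$. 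Subtract a path in $\operatorname{CR}_p(X)$ from $0$ to $h_j$ to get a based loop $s_j$ with $q_ps_j=\iota_j$. Then set $s((t_j))=\sum_js_j(t_j)$, which is continuous by the final topology. You identified the endpoint problem but not this correction.

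The same coordinatewise summation, using arc lifts, is what gives local sections when $\operatorname{rank}\Lambda_p(X)=\infty$. In that case no neighborhood of $0$ in $Q_p(X)$ lies in a finite torus. So your ``$\operatorname{CR}_p$-neighborhood times torus-neighborhood'' does not give local triviality over $Q_p(X)$, the Hurewicz property, or the retraction used for local contractibility.

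\textbf{Smaller gaps.}
\begin{enumerate}
\item Density of the path-connected $R_p(X)$ only makes the closure connected. Path connectedness needs the local path connectedness supplied by the retraction $g\mapsto g-s(q_p(g))$.
\item Local contractibility of $G_p(X)$ should come from Friedlander's actual CW structure, not from being ``a colimit of compact ANRs''.
\item Milnor's ANR theorem is not a valid route to CW homotopy type here. The paper uses Sch\"on's fiber theorem; domination of $\operatorname{CR}_p(X)$ by $G_p(X)$ through the retraction would also work.
\end{enumerate}
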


The weak product in \eqref{eq:weakproduct} is an aspherical CW complex. Consequently $Q_p(X)$ has homotopy type $K(\Lambda_p(X),1)$, and its universal cover is contractible by Whitehead's theorem. These conclusions have no dimension-dependent upper bound.

\begin{corollary}\label{cor:moore}
Noncanonically, the weak homotopy type of the closure is
\begin{equation}\label{eq:moore}
 K(\ker\kappa_p,1)\times\prod_{m\geq2}K\bigl(L_pH_{2p+m}(X),m\bigr).
\end{equation}
Furthermore, $\bigl(\pi_1\operatorname{CR}_p(X)\bigr)_{\mathrm{tors}}\cong\bigl(L_pH_{2p+1}(X)\bigr)_{\mathrm{tors}}$.
\end{corollary}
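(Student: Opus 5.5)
The plan is to build the corollary entirely from Theorem~\ref{thm:main}, together with the standard fact that a path-connected topological abelian group of CW homotopy type splits, up to weak equivalence, as a product of Eilenberg--Mac Lane spaces. The group $\operatorname{CR}_p(X)$ is path connected, abelian and of CW homotopy type by Theorem~\ref{thm:main}(ii). Its classifying space $B\operatorname{CR}_p(X)$ is again a topological abelian group, or at least an $H$-group, and $\operatorname{CR}_p(X)\simeq\Omega B\operatorname{CR}_p(X)$. I would use the following form of the splitting theorem: a connected homotopy-commutative $H$-space of CW type that is a loop space of an abelian topological group has Postnikov $k$-invariants that vanish after one delooping. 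Concretely, I would realize $\operatorname{CR}_p(X)$ up to weak equivalence as a simplicial abelian group, namely the singular complex $S_\bullet\operatorname{CR}_p(X)$, which is a simplicial abelian group because the group law is abelian and continuous. Moore's theorem then gives that every simplicial abelian group is weakly equivalent to $\prod_m K(\pi_m,m)$, noncanonically; the splitting comes from choosing splittings of the normalized chain complex over $\mathbb{Z}$, which is possible since subgroups of free abelian groups are free.

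That gives $\operatorname{CR}_p(X)\simeq_w\prod_{m\ge1}K(\pi_m\operatorname{CR}_p(X),m)$. I would then substitute the homotopy groups. In degrees $m\ge2$, the isomorphism \eqref{eq:higher} gives $\pi_m\operatorname{CR}_p(X)\cong L_pH_{2p+m}(X)$. In degree $1$, the exact sequence \eqref{eq:exact} identifies $\pi_1\operatorname{CR}_p(X)$ with $\ker\kappa_p$ via the injective first map. Together these yield \eqref{eq:moore}. One point to check is that the infinite product is a weak product in the appropriate sense: weak homotopy type only concerns homotopy groups, and the product of Eilenberg--Mac Lane spaces has the correct $\pi_m$ in each degree, so there is no issue.

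For the torsion statement, I would apply torsion to \eqref{eq:exact}. Taking torsion subgroups is left exact, so we get $0\to(\ker\kappa_p)_{\mathrm{tors}}\to (L_pH_{2p+1}(X))_{\mathrm{tors}}\to(\Lambda_p(X))_{\mathrm{tors}}$. The last group vanishes because $\Lambda_p(X)$ is free abelian by Theorem~\ref{thm:main}(iii). Hence the inclusion-induced map $(\pi_1\operatorname{CR}_p(X))_{\mathrm{tors}}\to(L_pH_{2p+1}(X))_{\mathrm{tors}}$ is an isomorphism. It is surjective because any torsion element maps to a torsion element of a torsion-free group, hence to zero, so it lies in the kernel.

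The main obstacle is the first step: justifying that the topological group $\operatorname{CR}_p(X)$, which is not a priori a CW complex, is weakly equivalent to a simplicial abelian group whose homotopy agrees with it. The plan is to use the singular simplicial abelian group $S_\bullet\operatorname{CR}_p(X)$, whose geometric realization is weakly equivalent to $\operatorname{CR}_p(X)$. One must check that the group structure is compatible, which holds degreewise because the maps $\Delta^k\to\operatorname{CR}_p(X)$ form an abelian group under pointwise addition. Moore's splitting then applies directly, without needing local contractibility beyond what is used to get the homotopy groups themselves.
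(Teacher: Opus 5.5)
Your proposal is correct and is essentially the paper's argument: apply Moore's splitting theorem to the path-connected topological abelian group $\operatorname{CR}_p(X)$, substitute \eqref{eq:higher} and the injection in \eqref{eq:exact} from Theorem~\ref{thm:main}, and obtain the torsion claim from left exactness of torsion together with the freeness of $\Lambda_p(X)$. One small correction to your sketch of Moore's theorem: the normalized chains of $S_\bullet\operatorname{CR}_p(X)$ need not be free abelian, so the chain-level splitting has to be done after passing to a quasi-isomorphic free complex (equivalently, use that every complex over the hereditary ring $\mathbb{Z}$ is quasi-isomorphic to its homology), and with that change the argument goes through.
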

The proof is given in Section~\ref{sec:sections}. Formula~\eqref{eq:higher} identifies where higher homotopy occurs; it does not compute the Lawson groups for a general $X$. In particular, replacing Lawson homology by singular homology requires a separate comparison theorem. Nor does density of $R_p(X)$ identify that subgroup with its closure up to weak equivalence.

The topology of the closure has the following consequence for infinite generation. The Lawson existence assertion was stated in \cite[Theorem 1.7 and Corollary 1.8]{HuAJ}. Section~\ref{sec:infinite} supplies the passage from Clemens's theorem to the required projected invariant and proves the following explicit form together with its consequences for closures.

\begin{theorem}\label{thm:infiniteexistence}
There is a smooth quintic threefold $Y\subset\mathbb{P}^4$ such that, for every elliptic curve $E$ and all integers $p\geq1$, $m\geq0$, the smooth projective variety
\begin{equation}\label{eq:infinitevariety}
 X_{p,m}=E^m\times Y\times\mathbb{P}^{p-1},\qquad \dim X_{p,m}=p+m+2,
\end{equation}
satisfies
\begin{equation}\label{eq:infinitelawson}
 \dim_{\mathbb{Q}}\bigl(L_pH_{2p+m}(X_{p,m})_{\mathrm{hom}}\otimes\mathbb{Q}\bigr)=\infty.
\end{equation}
Consequently, for every $k\geq2p>1$ there is a smooth projective $X$ of dimension $k-p+2$ for which $L_pH_k(X)$ is not finitely generated. For $m\geq2$, one also has
\begin{equation}\label{eq:infiniteclosure}
 \dim_{\mathbb{Q}}\bigl(\pi_m\operatorname{CR}_p(X_{p,m})\otimes\mathbb{Q}\bigr)
 =\dim_{\mathbb{Q}}H_m(\operatorname{CR}_p(X_{p,m}),\mathbb{Q})=\infty.
\end{equation}
\end{theorem}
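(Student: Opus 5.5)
The argument has three stages. First, produce infinitely many independent classes on $Y$ itself, using Clemens's theorem. Second, transport them to $X_{p,m}$ by the product and projective-bundle structure of Lawson homology. Third, pass from Lawson homology to the closure using Theorem~\ref{thm:main}.

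\emph{Step 1: the base case on $Y$.} By Clemens's theorem, choose a general smooth quintic threefold $Y\subset\mathbb{P}^4$. There are infinitely many curves (differences of lines and conics) in $\ker(\mathrm{cl})\subset Z_1(Y)$ whose Griffiths group $\operatorname{Griff}_1(Y)\otimes\mathbb{Q}$ has infinite dimension. Their independence is detected by the Abel--Jacobi map to the intermediate Jacobian $J^2(Y)$.

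Friedlander's identification $L_1H_2(Y)_{\mathrm{hom}}\cong\operatorname{Griff}_1(Y)$, namely $\pi_0$ of homologically trivial cycles modulo algebraic equivalence, settles the case $p=1$, $m=0$. For $m=0$ and general $p$, the projective bundle formula gives a summand $L_pH_{2p}(Y\times\mathbb{P}^{p-1})\supset L_1H_2(Y)$ via $Z\mapsto Z\times\mathbb{P}^{p-1}$. This is compatible with $\Phi$, so the homologically trivial parts inject.

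\emph{Step 2: raising $m$ with elliptic factors.} For $m\ge1$, use the operation $\alpha\mapsto\alpha\times[E]$ together with the suspension/Künneth pairing
\[
L_0H_1(E)\otimes L_pH_{2p+m-1}(X_{p,m-1})\to L_pH_{2p+m}(X_{p,m}).
\]
Here $L_0H_1(E)=\pi_1 Z_0(E)\cong H_1(E)\cong\mathbb{Z}^2$. The product lands in $L_pH_{2p+m}$ because the $0$-cycle factor adds $0$ to the cycle dimension and $1$ to the homotopy degree. Iterating gives classes $\gamma_1\times\cdots\times\gamma_m\times\alpha$ with $\gamma_i\in H_1(E)$.

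Homological triviality is preserved since $\Phi$ is multiplicative. The real work is injectivity on the span of the infinitely many Clemens classes. To prove it, I would construct the ``projected invariant'' the paper alludes to. Apply slant product with the dual classes $\gamma_i^\vee\in H^1(E)$, or equivalently contract against $E^m$. Combined with a Walker-type Abel--Jacobi map on $L_pH_{2p+m}$ taking values in a Deligne-type torus, this recovers $\operatorname{AJ}(\alpha)\in J^2(Y)$ up to a nonzero integer.

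If this composite is defined and natural, a finite linear dependence among images in $L_pH_{2p+m}(X_{p,m})_{\mathrm{hom}}\otimes\mathbb{Q}$ would yield a dependence among Clemens's classes. That contradicts their independence, which gives \eqref{eq:infinitelawson}. This step is the main obstacle: one must show that the higher Abel--Jacobi invariant of an exterior product with $H_1(E)$ classes factors through the classical one on $Y$, with no loss to the algebraic part. I would attempt it by Walker's morphic Abel--Jacobi map and the Künneth compatibility of Deligne cohomology.

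\emph{Step 3: consequences.} For $k\ge2p>1$, set $m=k-2p\ge0$. The variety $X_{p,m}$ has dimension $p+m+2=k-p+2$, and an abelian group containing a subgroup of infinite rank is not finitely generated.

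For $m\ge2$, Theorem~\ref{thm:main}\eqref{eq:higher} gives $\pi_m\operatorname{CR}_p\cong L_pH_{2p+m}$, so the rational homotopy is infinite dimensional. For rational homology, Corollary~\ref{cor:moore} gives the weak type of a product of Eilenberg--Mac Lane spaces. The Hurewicz map is rationally split injective on the factor $K(L_pH_{2p+m},m)\otimes\mathbb{Q}$, since $H_m(K(B,m),\mathbb{Q})=B_{\mathbb{Q}}$ and that factor is a retract. Hence $H_m(\operatorname{CR}_p,\mathbb{Q})$ is infinite dimensional as well, which proves \eqref{eq:infiniteclosure}.
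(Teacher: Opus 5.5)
Your Steps 1 and 3 are fine. For $m\geq2$, your rational Hurewicz argument through Corollary~\ref{cor:moore} is an acceptable substitute for the split Hurewicz map the paper uses. Step 2, however, has a genuine gap, exactly at the point you flagged.

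A slant product with $\gamma_i^\vee\in H^1(E)$ is not available on Lawson homology itself. It can only be applied on the target of an Abel--Jacobi invariant of $L_pH_{2p+m}(X_{p,m})_{\mathrm{hom}}$. For $m\geq1$ such an invariant must be homotopy invariant. The homotopy trace of a based map $S^m\to Z_p$ has bidimensions $(p+i,p+m+1-i)$, $0\leq i\leq m+1$, so only pairings with $F^{p+m+2}H^{2p+m+1}$ are well defined. For your product classes the only test forms in this range are $\eta\wedge\beta\wedge h^{p-1}$ with $\eta\in H^{m,0}(E^m)$ and $\beta\in H^{3,0}(Y)$. Replacing $\beta$ by a class in $H^{2,1}(Y)$ gives type $(p+m+1,p)$, which is excluded.

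Hence no such construction recovers $\operatorname{AJ}(\alpha)\in J^2(Y)$. At most it recovers the top-form projection $\operatorname{AJ}^{\mathrm{top}}_Y(\alpha)\in H^{3,0}(Y)^*/P_Y$. Even that is recovered only modulo a larger rational period span: the image of $P_Y+\tau P_Y$ when $m=1$, and in general the span $R_{\mathbb{Q}}$ of projected integral periods of $H_{m+3}(E^m\times Y,\mathbb{Z})$. So your argument cannot give injectivity ``up to a nonzero integer'' on the span of the Clemens classes. A rational relation modulo these larger periods need not come from a relation in $\operatorname{Griff}_1(Y)\otimes\mathbb{Q}$.

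Two ingredients are therefore missing.

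First, Clemens's theorem gives independence of the full Abel--Jacobi values in $J(Y)$. That does not imply independence of their images in $H^{3,0}(Y)^*/P_Y$, and the paper proves this projected statement separately (Theorem~\ref{thm:projectedquintic}):
\begin{enumerate}
\item It spreads finitely many Clemens cycles over a finite \'etale cover of an open set of quintics.
\item It uses Lemma~\ref{lem:firstjet}, which rests on horizontality of normal functions and on $\Omega$ together with its first derivatives spanning $F^2$ (by completeness of quintic deformations). This shows that an identically vanishing projected relation would make a nonzero integral combination of the normal functions torsion. So each of the countably many relations cuts out a proper analytic locus.
\item A Baire argument then gives a dense $G_\delta$ of quintics with infinite projected rank.
\end{enumerate}

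Second, injectivity is replaced by a finite-kernel argument (Propositions~\ref{prop:ellipticrepair} and~\ref{prop:zerocycleproduct}). Take a class $\xi\in H_m(E^m,\mathbb{Z})$, realized by a Dold--Thom sphere in $Z_0(E^m)$, with nonzero period functional $a_\xi$ on $H^{m,0}(E^m)$. Then $v\mapsto a_\xi\otimes v$ induces a map $V_Y/(P_Y)_{\mathbb{Q}}\to(H^{m,0}(E^m)^*\otimes V_Y)/R_{\mathbb{Q}}$. Its kernel is finite dimensional over $\mathbb{Q}$ because $R_{\mathbb{Q}}$ is, and this suffices to keep the image infinite dimensional. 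The factor $\mathbb{P}^{p-1}$ is then handled by the test form of type $(p+m+2,p-1)$, whose period subgroup is unchanged by integral K\"unneth.
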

The quintic may be chosen in one dense $G_\delta$ subset of the analytic parameter space of smooth quintics. This analytic genericity is sufficient for existence; no algebraicity of the additional exceptional sets is assumed. Corollary~\ref{cor:simultaneous} realizes a prescribed finite interval of Lawson degrees on one fixed smooth projective variety. In degree $m=1$ the exact sequence \eqref{eq:exact}, rather than \eqref{eq:higher}, governs the conclusion.

The results allow singular, reducible and nonequidimensional projective varieties. Smoothness is imposed only for the classical Jacobians and pure Hodge theory. For singular targets all Chow groups are ordinary dimension indexed Chow groups, including cycles in the singular locus.

The second structural comparison is the integral quotient
\[
 \Lambda_p(X)\twoheadrightarrow H_1(J_W^{2r-1}(X),\mathbb{Z}),
 \qquad r=\dim X-p,
\]
proved in Theorem~\ref{thm:walkerquotient}. Here $J_W^{2r-1}(X)$ is the Walker intermediate Jacobian, defined in Section~\ref{sec:invariants}. We retain its integral lattice, rather than replacing it by an isogenous torus. The remaining factor measures precisely the part of the quotient lattice not detected by the usual cycle map. Proposition~\ref{prop:comparisoncriterion} shows that this factor vanishes when the rational Lawson cycle map in degree $2p+1$ is injective.

Sections~\ref{sec:topology}--\ref{sec:sections} prove the closure theorem. Sections~\ref{sec:torsion} and~\ref{sec:invariants} treat regular quotients, torsion and integral periods. The classical calculations in Section~\ref{sec:examples} illustrate the hypotheses. The coniveau computation in Section~\ref{sec:variation} uses established Hodge group results; its additional point is the simultaneous density of four rank strata. Finally, Section~\ref{sec:infinite} supplies the projected period argument required for the infinite rank applications. The Lawson existence statement was already made in \cite{HuAJ}; we include the missing passage from the full Abel--Jacobi invariant to its top form projection rather than treating the older statement as a new result.

\section{The topology of cycle groups}\label{sec:topology}
Let $X$ be any connected reduced complex projective variety. Fix an embedding and let $C_{p,d}(X)$ be its projective Chow variety of effective $p$-cycles of degree $d$. Write $M_p(X)=\coprod_{d\geq0}C_{p,d}(X)$. Its group completion is $Z_p(X)$, with compact filtration
\begin{equation}\label{eq:Kd}
 K_d=\operatorname{im}\left(\coprod_{a+b\leq d}C_{p,a}(X)\times C_{p,b}(X)
 \xrightarrow{(u,v)\mapsto u-v} Z_p(X)\right).
\end{equation}
The final topology of the compact Hausdorff spaces $K_d$ is Hausdorff and independent of the embedding \cite{Fri91,LF92}; see also \cite[Section 3.2, Properties 1 and 5, pp.~78--79; Section 3.2.2, equation (4), p.~82]{LF04}. Thus $Z_p(X)$ is a $k_\omega$ topological abelian group: its topology is determined by a countable increasing compact exhaustion. Finite products of Hausdorff $k_\omega$ spaces are $k_\omega$, so the finite product topologies used here agree with their compactly generated versions. Addition restricts to continuous maps $K_d\times K_e\longrightarrow K_{d+e}$. Theorem~3.2.16 of \cite[p.~83]{LF04} identifies this topology with the topology generated by flat algebraic families in Example~3.2.4; Corollary~3.2.9 identifies the latter with the equidimensional-family topology. Here ``flat'' refers to the algebraic families, not to an unfiltered flat-norm topology on currents.

Property~1 in \cite[p.~78]{LF04} asserts CW homotopy type. Local contractibility requires a statement about the actual space. We use \cite[Theorem 1.5(d), with Definition 1.2 and Lemma 1.3]{Fri98} with source a point and ambient target $\mathbb{P}^N$. The monoid $E_p=M_p(X)\subset M_p(\mathbb{P}^N)$ is cancellative. In every degree, the condition that a cycle be supported on the closed subvariety $X$ is Zariski closed. Thus $E_p$ is constructibly embedded in the sense of that theorem, and the zero submonoid $F_p=0$ is Zariski closed. No normality or equidimensionality of $X$ is needed, since the normal ambient target is $\mathbb{P}^N$.

The naive group completion used in \cite{Fri98} has the quotient topology from $E_p\times E_p$. Its restriction to a finite union of degree pairs is a continuous surjection from a compact space to the Hausdorff set $K_d$, hence a quotient map. Consequently a subset of the completion is open if and only if its inverse image on every degree pair is open, equivalently if and only if its intersection with every $K_d$ is open. This is precisely the topology of \eqref{eq:Kd}. Theorem~1.5(d) of \cite{Fri98} therefore supplies a CW structure on $Z_p(X)$ with the topology used here. In particular, $Z_p(X)$ is locally contractible. We use the neighborhood formulation: every neighborhood of a point contains a smaller neighborhood whose inclusion in the original neighborhood is nullhomotopic. The Hurewicz fibration used below is established by our local section argument, not inferred from a weaker use of the word \emph{fibration} in the literature.

\begin{lemma}\label{lem:components}
The subgroup $G_p(X)$ is open and closed in $Z_p(X)$ and is its path component of zero. The subgroup $R_p(X)$ is path connected. Its closure is connected and contained in $G_p(X)$.
\end{lemma}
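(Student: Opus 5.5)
The plan is to work from the local structure established above: $Z_p(X)$ is a locally contractible $k_\omega$ topological group of CW type. Four claims need proof: $G_p(X)$ is the path component of zero, it is open and closed, $R_p(X)$ is path connected, and $\operatorname{CR}_p(X)$ is connected and lies in $G_p(X)$.

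\emph{Step 1: path components.} In a locally path connected topological group, the path component of the identity is an open subgroup. Its complement is a union of cosets, which are also open, so it is closed as well. It therefore suffices to show that this path component equals $G_p(X)$.

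\emph{Step 2: $G_p(X)$ is path connected.} Recall that a cycle $z$ is algebraically equivalent to zero when $z=\sum_i (W_{i,t_i}-W_{i,s_i})$. Here each $W_i$ is a family of effective cycles parametrized by a connected (after normalizing, irreducible) smooth projective curve $T_i$, and $t_i,s_i\in T_i$. The map $T_i\to C_{p,d_i}(X)\subset Z_p(X)$ is continuous, since Chow-variety valued morphisms are continuous in the analytic topology. Hence a path in $T_i$ from $s_i$ to $t_i$ gives a path in $Z_p(X)$ from $0$ to $W_{i,t_i}-W_{i,s_i}$. Adding these paths, using continuity of addition, gives a path from $0$ to $z$.

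\emph{Step 3: the path component is contained in $G_p(X)$.} This is the main obstacle. Let $\gamma:[0,1]\to Z_p(X)$ be a path from $0$ to $z$. Its image is compact. In a $k_\omega$ space with exhaustion $K_d$, every compact set lies in some $K_d$, so $\gamma$ lands in a single $K_d$. Now $K_d$ is the image of the compact space $\coprod_{a+b\le d} C_{p,a}\times C_{p,b}$, which has finitely many connected components, each of them a projective algebraic set. I would argue that each fiber $\{u-v\}$ of this map over a point of $K_d$ is Zariski closed, and that the relation ``$u-v$ lie in a common coset of $G_p$'' is constant on connected components. Concretely, on a connected component $C$ of $C_{p,a}\times C_{p,b}$, any two points are joined by a chain of irreducible curves. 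The corresponding differences are therefore algebraically equivalent. So each connected component of the source maps into a single coset of $G_p(X)$, and $K_d$ meets only finitely many cosets. These finitely many images are compact, hence closed, and their pairwise intersections lie in common cosets. Therefore $K_d\cap (z+G_p(X))$ is a union of finitely many of these closed pieces, so each coset meets $K_d$ in a clopen subset of $K_d$. Consequently $\gamma^{-1}(z+G_p)$ is clopen in $[0,1]$, and connectedness of $[0,1]$ forces $z\in G_p(X)$. As a byproduct, this argument directly shows that $G_p$ is clopen in the final topology.

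\emph{Step 4: $R_p(X)$ and its closure.} A rationally trivial cycle is a sum of differences $W_{i,\infty}-W_{i,0}$ of families over $\mathbb{P}^1$. The argument of Step 2 with $T_i=\mathbb{P}^1$ therefore gives paths inside $R_p(X)$, because every intermediate cycle $W_{i,t}-W_{i,0}$ is itself rationally trivial. So $R_p(X)$ is path connected, hence connected. The closure of a connected set is connected. Finally, $R_p(X)\subset G_p(X)$ and $G_p(X)$ is closed by Step 1, so $\operatorname{CR}_p(X)\subset G_p(X)$.
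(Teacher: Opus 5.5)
Your proposal is correct and follows essentially the paper's argument: constancy of the algebraic-equivalence class on the finitely many connected components of each compact presentation $K_d$ gives clopenness, curve families give path connectedness of $G_p(X)$, and $\mathbb{P}^1$-families give path connectedness of $R_p(X)$. The only differences are that you derive the identification of $G_p(X)$ with the path component of zero yourself instead of citing the standard $\pi_0$ computation, and that your Step~1 appeal to local path connectedness is redundant once Step~3 shows $G_p(X)$ is clopen.
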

\begin{proof}
The standard component calculation identifies $\pi_0Z_p(X)$ with cycles modulo algebraic equivalence \cite{Fri91,LF92,Walker}. On each compact presentation in \eqref{eq:Kd}, the class is constant on each of finitely many connected components. The final topology therefore makes the class map to the discrete class group continuous, proving openness and closedness. The curve family description gives the path component assertion. A generator of rational equivalence is a difference of fibers over $\mathbb{P}^1$. A parameter path, followed by subtraction of its initial fiber, joins this difference to zero inside $R_p(X)$. Finite sums and inverses prove path connectedness. Taking closure gives the final assertion.
\end{proof}

We shall repeatedly use the curve presentation
\begin{equation}\label{eq:curvepresentation}
 z=\sum_{i=1}^s\epsilon_i\Gamma_{f_i}(u_i),\qquad
 \Gamma_f\left(\sum_jn_j[t_j]\right)=\sum_jn_jf(t_j),
\end{equation}
where $z\in G_p(X)$, $\epsilon_i\in\{1,-1\}$, $D_i$ is a smooth connected projective curve, $f_i:D_i\to C_{p,d_i}(X)$ is a morphism, and $u_i\in Z_0(D_i)^0$. This is an equality of cycles. The curve description of algebraic equivalence gives such a presentation modulo rational equivalence \cite[Section 10.3]{Fulton}; any rationally trivial residual cycle is a sum of $\mathbb{P}^1$-family differences \cite[proof of Theorem 5.7(2)]{Walker}. Properness of the Chow variety extends morphisms from smooth parameter curves to their projective completions.

We also use the following precise fiber theorem: a Hurewicz fibration whose total space and base have CW homotopy type has fibers of CW homotopy type \cite[Proposition 3]{Schon}.

\section{Zero-cycles on parameter spaces}\label{sec:parameters}
The parameter spaces in this section are smooth and projective, whereas the target of a cycle family may be singular. For such a parameter space $T$, write $\operatorname{Alb}(T)$ for its Albanese variety. For a smooth projective curve $D$, put $J(D)=\operatorname{Pic}^0(D)$ and let $g(D)$ denote its genus; $\operatorname{Sym}^eD$ is its $e$th symmetric power.

\begin{lemma}\label{lem:albsections}
For a smooth connected projective variety $T$, the continuous Albanese homomorphism
\[
 a_T:Z_0(T)^0\longrightarrow\operatorname{Alb}(T)
\]
is surjective and admits continuous local sections through any prescribed point of its total space.
\end{lemma}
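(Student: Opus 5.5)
We reduce to curves and then use the smoothness of the Abel--Jacobi map on symmetric powers. Let $g=\dim\operatorname{Alb}(T)$. Continuity and additivity of $a_T$ are standard. On each compact piece $K_d$ of $Z_0(T)^0$, the map $a_T$ is induced by the algebraic maps $\operatorname{Sym}^aT\times\operatorname{Sym}^bT\to\operatorname{Alb}(T)$, $(u,v)\mapsto\sum\mathrm{alb}(u_i)-\sum\mathrm{alb}(v_j)$. These maps are continuous, and they are compatible with the quotient description of $K_d$ from Section~\ref{sec:topology}.

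For surjectivity and local sections, choose a smooth connected curve $D\subset T$ that is a complete intersection of general ample hypersurfaces. By the Lefschetz theorem for curves, $J(D)\to\operatorname{Alb}(T)$ is surjective. Since $\operatorname{Alb}(T)$ is an abelian variety, Poincaré reducibility gives an abelian subvariety $B\subset J(D)$ that is isogenous onto $\operatorname{Alb}(T)$. Fix $e\ge g(D)$ and a base point $t_0\in D$. The Abel--Jacobi map $\alpha:\operatorname{Sym}^eD\to J(D)$ is then a surjective morphism of smooth varieties. Its generic fiber is a projective space, and it is smooth over the dense open set where the fiber has the minimal dimension $e-g(D)$. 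Composing $\alpha$ with the projection $J(D)\to\operatorname{Alb}(T)$ gives a smooth morphism $\beta$ on a nonempty Zariski open $U\subset\operatorname{Sym}^eD$. Surjectivity of $a_T$ already follows: every point of $\operatorname{Alb}(T)$ is $a_T([x]-[t_0])$ for some $x\in T$ when $\dim T\ge1$, and otherwise $\operatorname{Alb}(T)=0$.

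For continuous local sections, use the holomorphic implicit function theorem. Smoothness of $\beta$ at a point $w\in U$ gives a holomorphic local section $\sigma$ of $\beta$ near $\beta(w)$ with $\sigma(\beta(w))=w$. The map $y\mapsto\sigma(y)-e[t_0]$ is then a continuous local section of $a_T$ into $Z_0(T)^0$. Continuity holds because $\operatorname{Sym}^eD\to C_{0,e}(T)\subset Z_0(T)$ is continuous.

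To pass through a prescribed point $z\in Z_0(T)^0$ over $x=a_T(z)$, translate. Take any $w\in U$ and a local section $\sigma$ near $\beta(w)$, and set
\[
 s(y)=z+\sigma\bigl(y-x+\beta(w)\bigr)-w .
\]
Here $w$ is regarded as the cycle $w-e[t_0]+e[t_0]$, so $s(y)$ is the cycle $z+(\sigma(\cdot)-w)$. Then $s(x)=z$. Moreover $a_T(s(y))=x+(y-x+\beta(w))-\beta(w)=y$ after the base-point terms cancel. Finally, $s$ is continuous because addition $K_d\times K_{d'}\to K_{d+d'}$ is continuous.

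The main obstacle is the smoothness of $\beta$ at some point, that is, surjectivity of the differential $T_w\operatorname{Sym}^eD\to T\operatorname{Alb}(T)$. This holds generically by generic smoothness in characteristic zero, since $\beta$ is a dominant morphism of smooth varieties. Only one point is needed, because the translation trick handles arbitrary base points. If the curve Lefschetz step seems delicate, we can instead use $T$ directly: choose $e\ge g$ and consider $T^e\to\operatorname{Alb}(T)$, $(t_i)\mapsto\sum\mathrm{alb}(t_i)-e\,\mathrm{alb}(t_0)$. This map is surjective because the image of $T$ generates $\operatorname{Alb}(T)$, so generic smoothness again yields a point where it is a submersion. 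Composing with $T^e\to\operatorname{Sym}^eT\to Z_0(T)$ then gives the section in the same way.
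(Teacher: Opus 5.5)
Your justification of surjectivity is false as written. It is not true that every point of $\operatorname{Alb}(T)$ has the form $a_T([x]-[t_0])$ with $x\in T$: for a curve of genus $g\geq 2$, the image of $T$ in $J(T)$ is only a curve. The conclusion is still right, and your own setup gives it. For $e\geq g(D)$, Jacobi inversion makes $\alpha$ surjective, so $\beta$ is surjective, and $a_T(u-e[t_0])=\beta(u)$. Alternatively, use the sum map $T^e\to\operatorname{Alb}(T)$ from your last paragraph. With that sentence replaced, the proof is correct. The Poincar\'e-reducibility step is never used and can be dropped.

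The overall skeleton matches the paper's: a complete-intersection curve $D$ with $J(D)\to\operatorname{Alb}(T)$ onto, Abel maps on $\operatorname{Sym}^eD$, pushforward to $T$, then translation. The difference is how you get the local section.

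\begin{itemize}
\item The paper takes $e\geq 2g(D)-1$. There the Abel map is a projective bundle, so it has local sections through every effective divisor. The paper composes one through a chosen $E$ with a Lie-group local section of $J(D)\to\operatorname{Alb}(T)$ at zero, which gives a section through $0$ directly.
\item You take $e\geq g(D)$ and obtain a section only through a point $w$ where $\beta$ is a submersion, using $h^1(\mathcal{O}(w))=0$ or generic smoothness plus the holomorphic implicit function theorem. The translation $y\mapsto z+\sigma(y-x+\beta(w))-w$ then moves it to any prescribed point.
\end{itemize}

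For the lemma as stated, your route is legitimate. Your $T^e$ variant even avoids the Lefschetz theorem for curves. What the paper's version buys is local sections of the Abel map through an arbitrary prescribed effective divisor. The paper reuses exactly that in Proposition~\ref{prop:zero-closure} and Theorem~\ref{thm:torsioncriterion}, to approximate $u+E$ by effective divisors with prescribed classes while keeping a fixed degree bound.
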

\begin{proof}
The assertion is immediate for a point. Otherwise choose a smooth ample complete intersection curve $D\subset T$ for which $J(D)\to\operatorname{Alb}(T)$ is surjective; take $D=T$ in dimension one. A surjective homomorphism of compact Lie groups has a local section at zero. For $e\geq\max\{0,2g(D)-1\}$, the Abel map $\operatorname{Sym}^eD\to\operatorname{Pic}^e(D)$ is a projective bundle. Choose an effective divisor $E$ of degree $e$, identify $J(D)$ with $\operatorname{Pic}^e(D)$ by adding $E$, and take a local section through $E$. Composing the two sections, subtracting $E$, and pushing to $T$ gives a local section through the zero cycle. Translations give sections through any prescribed cycle. Surjectivity follows from the same Jacobian and Abel maps.
\end{proof}

\begin{proposition}\label{prop:albkernel}
Put $K_0(T)=\ker a_T$ with its induced topology. Without using Roitman's theorem, this space is path connected, locally contractible, and of CW homotopy type, and
\begin{equation}\label{eq:albkernel}
 \pi_1K_0(T)=H_1(T,\mathbb{Z})_{\mathrm{tors}},\qquad
 \pi_mK_0(T)=H_m(T,\mathbb{Z})\quad(m\geq2).
\end{equation}
\end{proposition}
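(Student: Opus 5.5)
The plan is to treat $a_T:Z_0(T)^0\to\operatorname{Alb}(T)$ as a principal $K_0(T)$-bundle and read off the homotopy of the fiber from the long exact sequence. The total space is handled by Dold--Thom, and the base is a compact torus.

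First, the translates of the local sections of Lemma~\ref{lem:albsections} trivialize $a_T$ over a finite open cover of the compact base $\operatorname{Alb}(T)$. If $s$ is a continuous local section on an open set $U$, the map $(u,k)\mapsto s(u)+k$ is a continuous bijection $U\times K_0(T)\to a_T^{-1}(U)$. Its inverse $z\mapsto (a_T(z),z-s(a_T(z)))$ is continuous because addition and negation are continuous in the $k_\omega$ group $Z_0(T)^0$. Thus $a_T$ is a locally trivial principal bundle over a paracompact base, hence numerable and a Hurewicz fibration. Local contractibility of $K_0(T)$ then follows from local contractibility of $Z_0(T)^0$ and of $\operatorname{Alb}(T)$: near $0$, the open set $a_T^{-1}(U)$ is homeomorphic to $U\times K_0(T)$, and $K_0(T)$ is a retract of it. Retracts of locally contractible spaces are locally contractible in the neighborhood sense, via the projection $z\mapsto z-s(a_T(z))$.

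Second, $Z_0(T)$ is the group completion of $\coprod_d\operatorname{Sym}^dT$, and by the Dold--Thom theorem (Lawson homology for $p=0$) one has $\pi_mZ_0(T)^0=H_m(T,\mathbb{Z})$ for $m\geq1$. This space has CW homotopy type by the Friedlander result recalled in Section~\ref{sec:topology}, and so does the torus. Schön's theorem \cite{Schon} then gives $K_0(T)$ CW homotopy type. Since $\operatorname{Alb}(T)$ is a $K(H_1(T)_{\mathrm{free}},1)$, the long exact sequence gives $\pi_mK_0(T)\cong H_m(T,\mathbb{Z})$ for $m\geq2$. In low degrees it gives
\[
 0\to\pi_1K_0(T)\to H_1(T,\mathbb{Z})\xrightarrow{(a_T)_*}H_1(\operatorname{Alb}(T),\mathbb{Z})\to\pi_0K_0(T)\to0,
\]
using that $\pi_0Z_0(T)^0=0$ by Lemma~\ref{lem:components} applied to $p=0$.

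The main obstacle is identifying $(a_T)_*$ with the canonical surjection $H_1(T,\mathbb{Z})\to H_1(T,\mathbb{Z})_{\mathrm{free}}=H_1(\operatorname{Alb}(T),\mathbb{Z})$, without invoking Roitman's theorem. I would check this by naturality along the inclusion $T\hookrightarrow Z_0(T)^0$, $t\mapsto[t]-[t_0]$. This inclusion induces the Dold--Thom isomorphism on $\pi_1$ through the Hurewicz map, and its composite with $a_T$ is the Albanese morphism $T\to\operatorname{Alb}(T)$. By construction of the Albanese, the latter induces the canonical surjection on $H_1$, with kernel exactly the torsion. Exactness then yields $\pi_1K_0(T)=H_1(T,\mathbb{Z})_{\mathrm{tors}}$ and $\pi_0K_0(T)=0$. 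Because $K_0(T)$ is locally contractible, hence locally path connected, $\pi_0=0$ gives path connectedness. One subtlety to verify is that the Dold--Thom identification is compatible with the Chow topology on $Z_0(T)$; this holds by the final-topology description \eqref{eq:Kd}.
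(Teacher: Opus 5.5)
Your proposal is correct and follows essentially the same route as the paper. Lemma~\ref{lem:albsections} gives a locally trivial principal bundle, hence a Hurewicz fibration; the retraction $z\mapsto z-s(a_T(z))$ gives local contractibility; Sch\"on's fiber theorem gives CW homotopy type; and Dold--Thom with the long exact sequence over the aspherical torus gives the homotopy groups. Your naturality argument through $t\mapsto[t]-[t_0]$ and the Albanese morphism is simply a more explicit version of the paper's appeal to the period definition of $a_T$ on $\pi_1$.
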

\begin{proof}
Lemma~\ref{lem:albsections} makes $a_T$ a locally trivial principal $K_0(T)$-bundle over a compact torus, hence a numerable bundle and a Hurewicz fibration \cite{Dold}. A local section $s$ through zero gives the retraction $z\mapsto z-s(a_T(z))$ from an open subset of $Z_0(T)^0$ to $K_0(T)$. Local contractibility follows from the actual CW structure of the cycle space and the neighborhood definition of local contractibility.

Dold--Thom gives $\pi_mZ_0(T)^0=H_m(T,\mathbb{Z})$ for $m\geq1$ \cite[Section 2]{LF92}. The period definition of the Albanese map identifies its map on $\pi_1$ with $H_1(T,\mathbb{Z})\twoheadrightarrow H_1(T,\mathbb{Z})_{\mathrm{free}}=H_1(\operatorname{Alb}(T),\mathbb{Z})$. This is independent of a Chow-group torsion theorem. The homotopy exact sequence and asphericity of the base give path connectedness and \eqref{eq:albkernel}. The total component is a CW subcomplex, so the fiber theorem quoted in Section~\ref{sec:topology} gives CW homotopy type.
\end{proof}

\begin{proposition}\label{prop:zero-closure}
For every smooth connected projective $T$,
\begin{equation}\label{eq:zero-closure}
 \overline{R_0(T)}=\ker a_T.
\end{equation}
\end{proposition}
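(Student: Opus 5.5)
We prove the two inclusions separately. For $\overline{R_0(T)}\subset\ker a_T$ we use continuity. The Albanese map $a_T:Z_0(T)^0\to\operatorname{Alb}(T)$ is a continuous homomorphism by Lemma~\ref{lem:albsections}, and $\operatorname{Alb}(T)$ is Hausdorff, so $\ker a_T$ is closed. Every generator of $R_0(T)$ is a difference of fibers $f(0)-f(\infty)$ of a family over $\mathbb{P}^1$. Its Albanese image is the image of $[0]-[\infty]$ under $J(\mathbb{P}^1)=0\to\operatorname{Alb}(T)$, hence zero. Therefore $R_0(T)\subset\ker a_T$, and taking closures gives the inclusion.

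For the reverse inclusion we must show that $R_0(T)$ is dense in $K_0(T)=\ker a_T$. We first reduce to curves. Let $z\in K_0(T)$. Since $T$ is smooth and connected, any finite set of points lies on a smooth connected curve $D\subset T$, for instance a complete intersection through those points, chosen so that $J(D)\to\operatorname{Alb}(T)$ is surjective as in Lemma~\ref{lem:albsections}. Then $z=\iota_*w$ with $w\in Z_0(D)^0$. The homomorphism $\iota_*:J(D)\to\operatorname{Alb}(T)$ is surjective with kernel $B$, a compact abelian Lie group whose identity component is an abelian variety, and $w$ maps into $B$ in $J(D)$.

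On the curve, the kernel of $Z_0(D)^0\to J(D)$ is exactly $R_0(D)$ by Abel's theorem, and $\iota_*R_0(D)\subset R_0(T)$. It therefore suffices to show two things:
\begin{enumerate}
\item every class in $B$ is a limit of Abel--Jacobi classes of cycles $w'$ on $D$ with $\iota_*w'\in R_0(T)$;
\item the local sections of $Z_0(D)^0\to J(D)$ from Lemma~\ref{lem:albsections} convert convergence of classes in $J(D)$ into convergence of the cycles $\iota_*w'$ to $z$ in $Z_0(T)$.
\end{enumerate}
Given (i), we take a local section $\sigma$ through $w$. If the classes $a_D(w'_k)$ converge to $a_D(w)$ inside $B$, then $w+\sigma(a_D(w'_k))-\sigma(a_D(w))$ differs from $w'_k$ by an element of $R_0(D)$ and converges to $w$. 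Pushing forward gives elements of $R_0(T)$ converging to $z$.

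To prove (i), we enlarge the curve. The classes in $J(D)$ of cycles whose push-forward is rationally trivial on $T$ form a subgroup $N\subset B$. It contains all classes coming from curves $D\cap S$ for surfaces, or from other curves $D'$ rationally connecting points of $D$. A cleaner route is to use the universal property of the Albanese map. The group $N$ is the kernel of $J(D)\to\operatorname{CH}_0(T)_{\mathrm{alg}}$, a countable union of translates of abelian subvarieties, by the standard countability argument for the Chow variety of rational equivalences. We show that $\overline N\supset B^0$ as follows. Suppose the identity component of $\overline N$ is an abelian subvariety $C\subsetneq B^0$. Then $J(D)/C$ is an abelian variety through which the map $Z_0(T)^0\to\operatorname{CH}_0(T)_{\mathrm{alg}}$, restricted to cycles on varying curves, factors modulo countable sets. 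By the universal property, $J(D)\to\operatorname{Alb}(T)$ would then factor through $J(D)/C$, contradicting $C\neq B^0$. This is the main obstacle. Making the \emph{countable union} argument and the universal property rigorous is delicate: it must produce a regular homomorphism from $\operatorname{CH}_0(T)_{\mathrm{alg}}$ to $J(D)/\overline N$ compatible with all curves. Mumford--Roitman style arguments do this via symmetric powers $\operatorname{Sym}^eT$.

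The component group $B/B^0$ is finite. We handle it either by varying $D$, since for a sufficiently ample $D$ Lefschetz gives connected kernel, or by noting that torsion classes in $B$ are detected by $H_1(T)_{\mathrm{tors}}$ and can be approximated via the path-connectedness of $R_0(T)$ from Lemma~\ref{lem:components}. Choosing $D$ ample enough that $B$ is connected avoids this issue, so we make that choice from the start.
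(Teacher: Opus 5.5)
\textbf{Verdict: genuine gap.} Your easy inclusion is sound and matches the paper's proof. So are the reduction to a smooth curve $D$ through the support of $z$ and your step (ii), which turns convergence of classes in $J(D)$ into convergence of cycles via local sections of the Abel map. The gap is step (i): the density of $N=\ker\bigl(J(D)\to\operatorname{CH}_0(T)\bigr)$ in $B$. This is the whole content of the proposition, and you leave it unproved.

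Your universal-property sketch cannot be repaired as stated.
\begin{enumerate}
\item The identity component of the closed subgroup $\overline N$ is only a real subtorus of $B^0$, so $J(D)/C$ need not be an abelian variety.
\item The universal property of $\operatorname{Alb}(T)$ applies only to a regular homomorphism defined on all of $\operatorname{CH}_0(T)_{\mathrm{alg}}$. A single Jacobian $J(D)$ does not dominate that group when $p_g(T)>0$ (Mumford), so no homomorphism $\operatorname{CH}_0(T)_{\mathrm{alg}}\to J(D)/C$ is available.
\item If you instead take $C$ to be the abelian subvariety $K$ whose countably many translates make up $N$, the target $K=B^0$ is false in general. Take a K3 surface and a very general ample $D$, so $B=J(D)$. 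If $N=J(D)$ held for very general $D$, a countability argument would make it hold for every member of the linear system. These members cover $S$ and pairwise meet, so $\operatorname{CH}_0(S)$ would be $\mathbb{Z}$, contradicting Mumford. Thus $K\subsetneq B^0$, even though $N$ is dense.
\end{enumerate}
Density of $N$ is therefore not a statement about the algebraic part of $N$. Your closing idea of approximating torsion classes via path connectedness of $R_0(T)$ does not address it either: path connectedness says nothing about whether a given torsion class lies in $R_0(T)$.

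The missing ingredient is Roitman's torsion theorem, which the paper invokes at exactly this point.
\begin{enumerate}
\item Torsion points are dense in every component of $B$, so your Lefschetz choice making $B$ connected is harmless but unnecessary.
\item Let $b\in B$ have finite order $M$, and write $b=a_D(w')$ with $w'\in Z_0(D)^0$. Abel's theorem gives $Mw'\in R_0(D)$. Hence $[\iota_*w']$ is a degree-zero torsion class in $\operatorname{CH}_0(T)$ with $a_T(\iota_*w')=0$.
\item Roitman's injectivity on torsion then gives $\iota_*w'\in R_0(T)$.
\end{enumerate}
So $N$ contains the dense subgroup $B_{\mathrm{tors}}$, and your step (ii) finishes the argument. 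As Remark~\ref{rem:roitman} stresses, the paper uses Roitman's theorem as an independent algebro-geometric input. It is not extracted from the topology of $\ker a_T$ established in Proposition~\ref{prop:albkernel}.
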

\begin{proof}
Continuity and rational invariance give the inclusion from left to right. Write an element of the right-hand side as $z=u-v$, with effective cycles $u,v$ of equal degree. In dimension at least two, choose a smooth projective ample curve $D\subset T$ through their finite supports; in dimension one take $D=T$. The zero-dimensional case is trivial. The divisor class $b=[u-v]\in J(D)$ belongs to the algebraic subgroup $B=\ker(J(D)\to\operatorname{Alb}(T))$, which need not be connected.

Torsion points are dense in every component of $B$. Indeed, torsion is dense in $B^0$, and every component has a torsion representative: if $Nx\in B^0$, choose $y\in B^0$ with $Ny=Nx$, so $x-y$ is such a representative. Choose torsion points $b_\nu\to b$. Add a fixed effective divisor $E$ so that $u+E$ and $v+E$ have common degree at least $2g(D)-1$. A local section of the Abel projective bundle through $u+E$ produces effective divisors $u_\nu\to u+E$ satisfying $[u_\nu-(v+E)]=b_\nu$. Pushing to $T$ gives $z_\nu\to z$ in a fixed bounded degree stage. The classes $[z_\nu]$ are torsion and Albanese-trivial. Roitman's torsion injectivity \cite{Roitman}, also proved in \cite[Theorem 4.2]{Bloch}, gives $[z_\nu]=0$. Thus $z$ lies in the closure of $R_0(T)$.
\end{proof}

\begin{remark}\label{rem:roitman}
Proposition~\ref{prop:albkernel} is topological and does not prove Roitman's theorem. Proposition~\ref{prop:zero-closure} uses that theorem independently. Algebraically, torsion injectivity says that $R_0(T)$ is saturated in $K_0(T)$:
\[
 z\in K_0(T),\quad Nz\in R_0(T),\quad N\geq1\quad\Longrightarrow\quad z\in R_0(T).
\]
Although $K_0(T)/R_0(T)$ has the indiscrete quotient topology by density, that topology does not determine its abstract torsion. This distinction prevents a circular deduction of a torsion theorem from local contractibility.
\end{remark}

\begin{lemma}\label{lem:familyfactor}
Let $T$ be smooth connected projective and $f:T\to C_{p,d}(X)$ a morphism. The continuous homomorphism $\Gamma_f:Z_0(T)^0\to G_p(X)$ sends $R_0(T)$ into $R_p(X)$. There is a unique continuous homomorphism
\begin{equation}\label{eq:familyfactor}
 \alpha_f:\operatorname{Alb}(T)\longrightarrow Q_p(X),\qquad q_p\Gamma_f=\alpha_fa_T.
\end{equation}
\end{lemma}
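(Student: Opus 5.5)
The plan has three steps: show that $\Gamma_f$ is a continuous homomorphism landing in $G_p(X)$, check that it carries $R_0(T)$ into $R_p(X)$, and then factor $q_p\Gamma_f$ through $a_T$ using Proposition~\ref{prop:zero-closure} and Lemma~\ref{lem:albsections}.

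\emph{Continuity and target.} The map $\Gamma_f$ is additive by definition. On the compact stage $\operatorname{Sym}^a T\times\operatorname{Sym}^b T$ it is the composite of $\operatorname{Sym}^a f\times\operatorname{Sym}^b f$, the sum morphisms $\operatorname{Sym}^aC_{p,d}(X)\to C_{p,ad}(X)$, and subtraction into $K_{(a+b)d}$. Each of these is continuous, so $\Gamma_f$ is continuous for the final topologies of \eqref{eq:Kd}. The group $Z_0(T)^0$ is path connected, by Proposition~\ref{prop:albkernel} together with the connected base $\operatorname{Alb}(T)$. Hence its image is path connected and contains $0$, so by Lemma~\ref{lem:components} it lies in $G_p(X)$.

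\emph{Rational equivalence.} It suffices to treat a generator $[\phi(0)]-[\phi(\infty)]$, where $\phi:\mathbb{P}^1\to T$ is a morphism. More generally, take a curve $C\to T$ and the divisor of a rational function on $C$; after normalizing, $C$ is smooth. Composing with $f$ gives a family $f\circ\phi:C\to C_{p,d}(X)$. For such a family, $\Gamma_{f\phi}(\operatorname{div} g)$ is rationally trivial. The reason is that $g$ defines a map $C\to\mathbb{P}^1$, and the cycle is the difference of the fibers over $0$ and $\infty$ of the family over $\mathbb{P}^1$ obtained by pushing forward the incidence cycle $\{(c,x):x\in f\phi(c)\}\subset C\times X$ along $g\times\mathrm{id}$. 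This is the standard fact that correspondences preserve rational equivalence, as in \cite[Section 1.4, Prop.~16.1.1]{Fulton}. Since $R_0(T)$ is generated by pushforwards of such divisors from curves, $\Gamma_f(R_0(T))\subset R_p(X)$.

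\emph{Factorization.} By continuity, $\Gamma_f(\overline{R_0(T)})\subset\overline{R_p(X)}=\operatorname{CR}_p(X)$. Proposition~\ref{prop:zero-closure} identifies $\overline{R_0(T)}$ with $\ker a_T$, so $q_p\Gamma_f$ kills $\ker a_T$. Lemma~\ref{lem:albsections} says $a_T$ is surjective, so $q_p\Gamma_f$ descends to a unique homomorphism $\alpha_f$ with $q_p\Gamma_f=\alpha_f a_T$. Continuity of $\alpha_f$ is local: near a point $t$, choose a continuous local section $s$ of $a_T$ through a preimage of $t$, which Lemma~\ref{lem:albsections} provides. Then $\alpha_f=q_p\Gamma_f s$ on that neighborhood, and this composite is continuous. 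Uniqueness follows from surjectivity of $a_T$.

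The main obstacle is the rational-equivalence step. One must verify that $\Gamma_f$ on divisors of functions on curves in $T$ really lands in $R_p(X)$ as defined, that is, as sums of $\mathbb{P}^1$-family differences in the Chow variety. When the target $X$ is singular, this requires phrasing the pushforward in terms of families, via $C\to C_{p,d}(X)$ and the pushforward of the incidence cycle to $\mathbb{P}^1\times X$. I would do this through the family over $\mathbb{P}^1$ given by $t\mapsto\Gamma_{f\phi}(g^{*}[t])$. This is a morphism $\mathbb{P}^1\to C_{p,\deg(g)d}(X)$, since $g^*[t]$ varies algebraically in $\operatorname{Sym}^{\deg g}C$.
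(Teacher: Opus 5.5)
Your proposal is correct and follows essentially the same route as the paper. Continuity is checked on symmetric powers. The $\mathbb{P}^1$-family description sends principal divisors on parameter curves into $R_p(X)$. Proposition~\ref{prop:zero-closure} together with continuity shows that $q_p\Gamma_f$ kills $\ker a_T$. Finally, the local sections of Lemma~\ref{lem:albsections} make $a_T$ a quotient map, which gives uniqueness and continuity of $\alpha_f$.

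Your explicit family $t\mapsto\Gamma_{f\phi}(g^*[t])$ and the path-connectedness check that the image lies in $G_p(X)$ only spell out details that the paper's terse proof leaves implicit.
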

\begin{proof}
On symmetric powers, $\Gamma_f$ is addition of the cycles $f(t)$ and multiplies the degree bound by $d$, proving continuity on group completions. The image of a principal divisor on a parameter curve is rationally trivial, by the $\mathbb{P}^1$-family description of rational equivalence \cite[proof of Theorem 5.7(2)]{Walker}; this holds for singular targets. Continuity and Proposition~\ref{prop:zero-closure} imply that $q_p\Gamma_f$ kills $\ker a_T$. It therefore factors uniquely, and the factor is continuous because Lemma~\ref{lem:albsections} makes $a_T$ a quotient map.
\end{proof}
Resolving entire projective Chow components, rather than using unrelated individual curves, will now provide finitely many parameter spaces for each bounded degree stage.

\section{The Hausdorff quotient is a union of tori}\label{sec:tori}
Here and in Section~\ref{sec:sections}, $X$ is allowed to be singular and reducible.

\begin{proposition}\label{prop:exhaustion}
There are compact connected real tori $\mathcal{A}_1\subset\mathcal{A}_2\subset\cdots\subset Q_p(X)$ such that
\[
 q_p(K_d\cap G_p(X))\subset \mathcal{A}_d,\qquad Q_p(X)=\bigcup_d\mathcal{A}_d.
\]
The final topology of this exhaustion is the quotient topology. Moreover, for each $d$ there is an integer $e(d)$ with $\mathcal{A}_d\subset q_p(K_{e(d)}\cap G_p(X))$.
\end{proposition}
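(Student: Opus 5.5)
The approach is to parametrize each bounded degree stage by finitely many smooth projective families, push their Albanese tori into $Q_p(X)$ by Lemma~\ref{lem:familyfactor}, and take the sum of the images. Fix $d$. The compact set $K_d$ is covered by the images of $C_{p,a}(X)\times C_{p,b}(X)$ with $a+b\leq d$, and each Chow variety has finitely many irreducible components. For each component $W$ of each $C_{p,a}(X)$ with $a\leq d$, choose a resolution $T_W\to W$. Then $T_W$ is smooth projective, and composition gives a morphism $f_W:T_W\to C_{p,a}(X)$. Lemma~\ref{lem:familyfactor} supplies continuous homomorphisms $\alpha_{f_W}:\operatorname{Alb}(T_W)\to Q_p(X)$. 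Define $\mathcal{A}_d$ as the sum of the images $\alpha_{f_W}(\operatorname{Alb}(T_W))$ over these finitely many $W$.

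Concretely, $\mathcal{A}_d$ is the image of the compact connected abelian Lie group $\prod_W\operatorname{Alb}(T_W)$ under a continuous homomorphism into the Hausdorff group $Q_p(X)$. It is therefore compact and connected. It is a torus provided the kernel is a closed subgroup, so that the image is a quotient Lie group, and this holds since the image is compact Hausdorff. The families for $d$ are among those for $d+1$, so $\mathcal{A}_d\subset\mathcal{A}_{d+1}$. Each inclusion is a closed subgroup inclusion because both groups are compact Hausdorff.

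To prove $q_p(K_d\cap G_p(X))\subset\mathcal{A}_d$, take $z=u-v$ with $u\in W_1$ and $v\in W_2$ algebraically equivalent. I will reduce to differences within a single component. The points $u$ and $v$ lift to $t_1\in T_{W_1}$ and $t_2\in T_{W_2}$. If $W_1=W_2$, then $q_p(z)=\alpha_f(a_T(t_1-t_2))$ directly. Otherwise, use that $q_p(z)$ only depends on $z$ modulo $\operatorname{CR}_p$. This is the main obstacle: $u$ and $v$ may lie in different components, and $z$ may not be expressible with bounded-degree connecting families.

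For this case, fix a base point $c_W$ in each component. Write $z=(u-c_{W_1})-(v-c_{W_2})+(c_{W_1}-c_{W_2})$. The first two terms map into $\mathcal{A}_d$. The finitely many constants $c_{W_1}-c_{W_2}$ that are algebraically trivial have images in $Q_p(X)$, and these lie in a torus only if we enlarge $\mathcal{A}_d$. I would enlarge it by the images of the finitely many curve presentations \eqref{eq:curvepresentation} of these constants, which adds finitely many further Albanese tori. These tori come from families of possibly larger degree, so the monotonicity must be re-indexed by taking cumulative sums. The presentation \eqref{eq:curvepresentation} also gives the exhaustion $Q_p(X)=\bigcup\mathcal{A}_d$: every $z\in G_p(X)$ is a finite signed sum of $\Gamma_{f_i}(u_i)$, up to a rationally trivial residual, and each $q_p\Gamma_{f_i}(u_i)$ lies in the torus attached to a component containing the image of $D_i$.

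The reverse bound $\mathcal{A}_d\subset q_p(K_{e(d)}\cap G_p(X))$ follows from the surjectivity in Lemma~\ref{lem:albsections}. Because $\operatorname{Alb}(T)$ is a compact torus, a fixed symmetric power $\operatorname{Sym}^eT$ already surjects onto it; this is the Jacobi inversion on the curve in that lemma. So each generating torus is the image of cycles $\Gamma_f(u-v)$ of degree bounded by $e\cdot\deg$. Summing finitely many such tori stays in a bounded stage $K_{e(d)}$.

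Finally, the quotient topology agrees with the final topology of the exhaustion. Suppose $U\subset Q_p(X)$ meets every $\mathcal{A}_d$ in a relatively open set. By the first inclusion, $q_p^{-1}(U)\cap K_d\cap G_p(X)$ is the preimage of $U\cap\mathcal{A}_d$ under the continuous map $K_d\cap G_p(X)\to\mathcal{A}_d$, hence open in $K_d\cap G_p(X)$. Since $G_p(X)$ is open in the $k_\omega$ group $Z_p(X)$, this makes $q_p^{-1}(U)$ open, and so $U$ is open in the quotient topology. The converse is immediate because each inclusion $\mathcal{A}_d\to Q_p(X)$ is continuous.
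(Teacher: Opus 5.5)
Your proposal is correct and follows the paper's proof essentially step for step. You resolve the finitely many Chow components in each stage and map their Albaneses into $Q_p(X)$ via Lemma~\ref{lem:familyfactor}, absorb the finitely many algebraically trivial base-point differences $c_{W_1}-c_{W_2}$ through their curve presentations (enlarging cumulatively), obtain the reverse bound $e(d)$ from Jacobi inversion on covering curves, and finish with the $k_\omega$ final-topology argument. The only redundancy is your separate curve-presentation argument for $Q_p(X)=\bigcup_d\mathcal{A}_d$, which already follows from the first inclusion because every cycle lies in some $K_d$.
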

\begin{proof}
The quotient is Hausdorff because $\operatorname{CR}_p(X)$ is a closed subgroup of the Hausdorff topological group $G_p(X)$. For a fixed $d$, only finitely many irreducible Chow components occur in \eqref{eq:Kd}. Resolve each one by a smooth connected projective variety $T_V\to V$ and choose a base point $t_V$. For the corresponding family $f_V$, Lemma~\ref{lem:familyfactor} places
\[
 q_p\bigl(f_V(t)-f_V(t_V)\bigr)
 \quad\text{in}\quad \operatorname{im}\bigl(\operatorname{Alb}(T_V)\to Q_p(X)\bigr).
\]
For a pair of components $V,W$ whose differences lie in $G_p(X)$, the quotient of any difference $a-b$ is a sum of two such Albanese images and the fixed element $q_p(f_V(t_V)-f_W(t_W))$. The cycle inside this last expression is algebraically trivial. The literal curve presentation \eqref{eq:curvepresentation} and Lemma~\ref{lem:familyfactor} put its image in a finite sum of curve Jacobian images. Signs are absorbed by inversion on the parameter Jacobians.

Thus $q_p(K_d\cap G_p(X))$ lies in the image of a finite product $B_d$ of complex abelian varieties. Include all previously chosen factors and let $\mathcal{A}_d$ be this image. A continuous homomorphism from a compact torus to a Hausdorff group identifies its image with the quotient by a closed subgroup. Such a quotient is a compact abelian Lie group, and here it is connected. Hence $\mathcal{A}_d$ is a real torus. The inclusions are closed, and their union is $Q_p(X)$.

For the bounded presentation, cover each nontrivial parameter Albanese by the Jacobian of a smooth curve $D_i$. For a sufficiently large effective divisor $E_i$ on $D_i$, the Abel map from $\operatorname{Sym}^{\deg E_i}D_i$, with $E_i$ subtracted, covers that Jacobian. The product of these symmetric powers maps onto $B_d$. Applying the finitely many cycle-family maps yields a compact set of cycles of one bounded degree mapping onto $\mathcal{A}_d$. This proves the assertion involving $e(d)$.

If $U\subset Q_p(X)$ is open, each $U\cap \mathcal{A}_d$ is open. Conversely, suppose all these intersections are open. The continuous maps $K_d\cap G_p(X)\to \mathcal{A}_d$ give open inverse images of $U$. Since $G_p(X)$ is open and closed, its topology is final for $K_d\cap G_p(X)$. Hence $q_p^{-1}(U)$ is open, and the quotient topology makes $U$ open.
\end{proof}

\begin{lemma}\label{lem:tori}
A countable direct limit of compact connected real tori with closed subgroup inclusions is noncanonically a finite or countable weak product of circles. It is a countable CW complex, is paracompact, and satisfies
\[
 \pi_1Q=\operatorname*{colim}_dH_1(\mathcal{A}_d,\mathbb{Z}),\qquad \pi_mQ=0\quad(m\geq2).
\]
Its fundamental group is free abelian.
\end{lemma}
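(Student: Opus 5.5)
The plan is to reduce everything to linear algebra on lattices, using the fact that a closed connected subgroup of a compact torus is a subtorus that splits off as a direct factor. Write $Q=\bigcup_d\mathcal{A}_d$ with closed inclusions $\mathcal{A}_d\subset\mathcal{A}_{d+1}$ of compact connected tori, and $Q$ carrying the final topology. Each $\mathcal{A}_d$ is a closed connected subgroup of $\mathcal{A}_{d+1}$, hence a subtorus. For a subtorus $S\subset A$ the sequence $0\to H_1(S,\mathbb{Z})\to H_1(A,\mathbb{Z})\to H_1(A/S,\mathbb{Z})\to0$ is exact with free cokernel, since $A/S$ is again a torus. Hence $H_1(S,\mathbb{Z})$ is a direct summand of $H_1(A,\mathbb{Z})$. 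Choosing a complementary lattice $L'$ and tensoring with $\mathbb{R}$, we get a complementary subtorus $S'=(L'\otimes\mathbb{R})/L'$ with $A=S\times S'$ as topological groups.

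First I would construct the splittings inductively. Let $\Lambda_d=H_1(\mathcal{A}_d,\mathbb{Z})$, so $\mathcal{A}_d=(\Lambda_d\otimes\mathbb{R})/\Lambda_d$. Choose a basis of $\Lambda_1$, then extend it at each stage by a basis of a complement of $\Lambda_d$ in $\Lambda_{d+1}$. This gives a basis $\{e_j\}_{j\in I}$ of $\Lambda=\operatorname{colim}\Lambda_d=\bigcup_d\Lambda_d$. The index set $I$ is countable, and finite if the ranks stabilize. For each $d$ this yields compatible group isomorphisms $\mathcal{A}_d\cong\mathbb{T}^{I_d}$, where $I_d\subset I$ is the finite set of indices introduced by stage $d$. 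These isomorphisms assemble into a group isomorphism $Q\cong\bigoplus_{j\in I}\mathbb{T}$. It is a homeomorphism because both sides carry the final topology of the same compatible compact exhaustion: the weak product has by definition the final topology of its finite coordinate tori, which are cofinal with the $\mathbb{T}^{I_d}$.

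Next come the CW and homotopy statements. Give $\mathbb{T}$ the CW structure with one $0$-cell $0$ and one $1$-cell. Then $\mathbb{T}^{I_d}$ carries the product cell structure, and $\mathbb{T}^{I_d}$ is a subcomplex of $\mathbb{T}^{I_{d+1}}$, embedded by putting $0$ in the new coordinates. The union is therefore a countable CW complex whose topology is the final topology, which is exactly the weak-product topology above. Paracompactness follows since CW complexes are paracompact.

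Homotopy groups commute with this colimit. Indeed, any map from a compact space, such as a sphere or a sphere times an interval, into a CW complex lands in a finite subcomplex, hence in some $\mathcal{A}_d$. Using this, $\pi_mQ=\operatorname{colim}_d\pi_m\mathcal{A}_d$. This vanishes for $m\geq2$, and for $m=1$ it equals $\operatorname{colim}_dH_1(\mathcal{A}_d,\mathbb{Z})=\Lambda$. The group $\Lambda$ is free abelian on $\{e_j\}$, since it is a union of lattices each a direct summand of the next.

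I expect the main obstacle to be topological rather than algebraic: one must check that the compact-support lemma applies to $Q$ in its given final topology. For this it suffices that the homeomorphism with the CW weak product has been established first, so I would prove the identification of topologies before invoking any cellular argument. A secondary point is to verify that the closed subgroup $\mathcal{A}_d$ really is connected and closed as a subgroup of $\mathcal{A}_{d+1}$, so that the subtorus splitting applies; this is part of the hypotheses.
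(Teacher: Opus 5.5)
Your proposal is correct and follows essentially the same route as the paper. Both arguments use primitivity of each lattice inclusion (because $\mathcal{A}_{d+1}/\mathcal{A}_d$ is a torus), extend a lattice basis successively to get compatible circle coordinates, identify the final topology with the weak product, and deduce the homotopy groups from the fact that compact subsets of a CW complex lie in finite subcomplexes. The paper states one step explicitly that you use only tacitly: a homomorphism of tori lifts to a real linear map of universal covers, which is what makes your identifications $\mathcal{A}_d\cong\mathbb{T}^{I_d}$ compatible with the given inclusions.
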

\begin{proof}
For $\mathcal{A}_d\subset \mathcal{A}_{d+1}$ the quotient is a torus. The injection on fundamental group lattices is primitive, since its cokernel is the free lattice of $\mathcal{A}_{d+1}/\mathcal{A}_d$. Choose a basis for $H_1(\mathcal{A}_1,\mathbb{Z})$ and extend it successively. A homomorphism of real tori is determined by its lattice map: its lift to universal covers is real linear. These bases therefore give compatible coordinates identifying $\mathcal{A}_d$ with the first $\operatorname{rank} H_1(\mathcal{A}_d,\mathbb{Z})$ circle factors. The final topology gives the asserted weak product.

Give each circle its finite CW structure with $0$ as a vertex. The coordinate tori are finite subcomplexes; their union has a countable CW structure and is paracompact. A compact subset of a CW complex lies in a finite subcomplex. Consequently sphere maps and their homotopies lie in finite coordinate tori, giving the homotopy calculation. The compatible lattice bases identify the fundamental group with a direct sum of copies of $\mathbb{Z}$.
\end{proof}

The use of real tori is intentional: the kernel of a parameter Albanese map to $Q_p(X)$ is known here to be a closed topological subgroup, not necessarily a complex abelian subvariety. Primitivity concerns the inclusions of the actual image tori $\mathcal{A}_d$, not the surjections $B_d\to \mathcal{A}_d$, which may have finite kernels.

\Needspace{9\baselineskip}
\section{Sections and the homotopy calculation}\label{sec:sections}
\subsection{Local sections and the fiber}
\begin{proposition}\label{prop:localsections}
The quotient map $q_p:G_p(X)\to Q_p(X)$ admits a continuous local section through zero.
\end{proposition}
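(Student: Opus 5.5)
We need a continuous local section of $q_p$ near $0$ in $Q_p(X)$. Since $Q_p(X)$ carries the final topology of the exhaustion by the tori $\mathcal{A}_d$, a map out of a neighborhood of zero is continuous exactly when its restriction to each $\mathcal{A}_d\cap U$ is continuous. The plan is to build the section coordinatewise, using the weak product structure of Lemma~\ref{lem:tori}, and to lift each circle coordinate through a parameter family.

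First, for each circle coordinate $j\in I$, we produce a continuous local lift through zero. By the proof of Proposition~\ref{prop:exhaustion}, each $\mathcal{A}_d$ is the image of a product $B_d$ of parameter Albaneses (or Jacobians $J(D_i)$) under the homomorphisms $\alpha_f$ of Lemma~\ref{lem:familyfactor}. The surjection $B_d\to\mathcal{A}_d$ is a surjective homomorphism of compact Lie groups. It therefore has a local section at zero, which can be taken to be a local homomorphism, for instance by lifting the Lie algebra linearly. Composing with the local section of $a_T$ from Lemma~\ref{lem:albsections} and then with $\Gamma_f$ gives a continuous local section $\sigma_d$ of $q_p$ over a neighborhood of $0$ in $\mathcal{A}_d$, with values in a bounded stage $K_{e}$. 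Using the compatible lattice bases of Lemma~\ref{lem:tori}, we arrange that the lift of the coordinates $\mathcal{A}_{d+1}/\mathcal{A}_d$ is chosen only once. Concretely, for each new circle coordinate $j$ we fix a single continuous map $s_j:(-\varepsilon_j,\varepsilon_j)\to G_p(X)$ with $s_j(0)=0$ and $q_ps_j(t)=t\,e_j$, where $e_j$ is the $j$th generator. This map comes from a real one-parameter subgroup lifted into some $B_d$ and then pushed through the Albanese sections.

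Second, we define the section on the neighborhood $U=\{x=\sum_j x_je_j : |x_j|<\varepsilon_j\}$ of the weak product by
\[
 s(x)=\sum_{j} s_j(x_j).
\]
This is a finite sum because $x$ has finite support and $s_j(0)=0$. On $U\cap\mathcal{A}_d$ only finitely many coordinates occur, so the restriction is a finite sum of continuous maps into $G_p(X)$. It lands in a compact stage $K_{e(d)}$, and addition $K_a\times K_b\to K_{a+b}$ is continuous. Hence $s$ is continuous on each $U\cap\mathcal{A}_d$, and therefore continuous on $U$ in the final topology. We need $U$ to be open, which holds since each $U\cap\mathcal{A}_d$ is an open box in a finite torus. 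Moreover $q_ps=\mathrm{id}$ because $q_p$ is a homomorphism.

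The main obstacle is checking that each coordinate lift $s_j$ really exists inside $G_p(X)$ in the required form. The difficulty is that the coordinates of Lemma~\ref{lem:tori} come from a lattice basis of the image tori, not from the parameter Albaneses, and $B_d\to\mathcal{A}_d$ may have finite kernel. To handle this, I would first take a real line in $\operatorname{Lie}(B_d)$ mapping onto the direction $e_j$, which exists because the map on Lie algebras is surjective. I would then restrict to a small interval, where the exponential map gives a continuous lift into $B_d$. After that, I would lift from $B_d$ to zero-cycles using the Abel projective bundle sections of Lemma~\ref{lem:albsections}, translating by a fixed divisor $E$ so that each $s_j(0)=0$. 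The choice of $\varepsilon_j$ depends only on $j$, so it is compatible across all $d$.
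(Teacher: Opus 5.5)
Your proposal is correct and follows essentially the same route as the paper: lift each compatible circle coordinate through a local section of the Lie surjection $B_d\to\mathcal{A}_d$, then through the Albanese local sections of Lemma~\ref{lem:albsections} and the family maps $\Gamma_f$, and finally sum the finitely many coordinate lifts on a box neighborhood $U$, using the final topology for continuity. The one point the paper spells out that you leave implicit is this: because $U$ is open, its induced topology is final for the sets $U\cap\mathcal{A}_d$, and that is what lets you pass from continuity on each $U\cap\mathcal{A}_d$ to continuity on $U$.
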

\begin{proof}
For a fixed image torus $\mathcal{A}_d$, its construction gives a surjection
\[
 b_d:B_d=\prod_{i=1}^s\operatorname{Alb}(T_i)\longrightarrow \mathcal{A}_d
\]
from parameter Albaneses. Choose local sections $\ell_i:V_i\to Z_0(T_i)^0$ of their Albanese maps, with $\ell_i(0)=0$. The finite sum
\[
 \ell(t_1,\ldots,t_s)=\sum_i\Gamma_{f_i}\ell_i(t_i)
\]
is continuous on $V=\prod_iV_i$ and satisfies $q_p\ell=b_d|_V$. The Lie-group surjection $b_d$ has a local section $j:W\to B_d$ through zero, even when its kernel is disconnected. Shrinking $W$ to ensure $j(W)\subset V$ gives a local lift $\ell j$ of $\mathcal{A}_d$.

Choose compatible circle coordinates from Lemma~\ref{lem:tori}. Each inclusion $\iota_j:\mathbb{T}\hookrightarrow Q_p(X)$ lies in some $\mathcal{A}_d$, so it admits a lift $s_j:I_j\to G_p(X)$ on an open arc $I_j$ through zero, with $s_j(0)=0$. Let $U$ consist of the finite-support points whose $j$th coordinate lies in $I_j$ for every $j$. Its intersection with every finite coordinate torus is open, so $U$ is open. Define
\begin{equation}\label{eq:localsection}
 s((t_j))=\sum_js_j(t_j),\qquad (t_j)\in U.
\end{equation}
The sum is finite at each point. It is continuous on every $U\cap \mathcal{A}_d$. For a subset $V\subset U$, the condition that $V\cap\mathcal{A}_d$ be open in $U\cap\mathcal{A}_d$ for every $d$ makes these intersections open in $\mathcal{A}_d$, since $U\cap\mathcal{A}_d$ is open. The final topology then makes $V$ open in $Q_p(X)$. Thus the induced topology on $U$ is indeed final for its intersections with the finite tori. Thus $s$ is continuous, $q_ps=\operatorname{id}_U$, and $s(0)=0$. No convergence of an infinite series is involved.
\end{proof}

\begin{proof}[Proof of Theorem~\ref{thm:main}]
Part (i) follows from Proposition~\ref{prop:exhaustion} and Lemma~\ref{lem:tori}. Translate the section of Proposition~\ref{prop:localsections} to obtain local sections everywhere. A section on $U$ gives the homeomorphism
\[
 \operatorname{CR}_p(X)\times U\longrightarrow q_p^{-1}(U),\qquad(c,u)\longmapsto c+s(u),
\]
with inverse $g\mapsto(g-s(q_p(g)),q_p(g))$. Therefore $q_p$ is a locally trivial principal bundle. Its CW base is paracompact, so the bundle is numerable and is a Hurewicz fibration \cite{Dold}.

The map $g\mapsto g-s(q_p(g))$ retracts the open subset $q_p^{-1}(U)$ of $G_p(X)$ onto $\operatorname{CR}_p(X)$. A retract of a locally contractible space is locally contractible in the neighborhood sense: shrink an ambient neighborhood so that its contraction stays in the inverse image of a prescribed neighborhood under the retraction, and then compose with the retraction. This applies to the actual cycle topology by Section~\ref{sec:topology}. The same neighborhood property implies local path connectedness, since the smaller neighborhood lies in a single path component of the larger open set. The closure is connected by Lemma~\ref{lem:components}, hence path connected. The total component is a CW subcomplex and the base is a CW complex, so \cite[Proposition 3]{Schon} gives CW homotopy type of the fiber.

The homotopy exact sequence now yields \eqref{eq:higher} and
\[
 0\to\pi_1\operatorname{CR}_p(X)\to\pi_1G_p(X)\to\pi_1Q_p(X)\to0.
\]
This proves (iii). Finally, Lemma~\ref{lem:global-section} below applies to this fibration with path-connected kernel and proves the global section and \eqref{eq:globalproduct}. Its last assertion also proves the claimed failure of a group section when the quotient is nonzero. The global argument therefore uses, rather than assumes, the connectedness of the fiber established above.
\end{proof}

\begin{proof}[Proof of Corollary~\ref{cor:moore}]
Moore's splitting theorem for topological abelian groups, recalled in \cite[p.~295, before Theorem 11.5]{McCord}, identifies their weak homotopy type with a product of Eilenberg--Mac Lane spaces. Apply it to $\operatorname{CR}_p(X)$ and use Theorem~\ref{thm:main}. The torsion assertion follows from \eqref{eq:exact}, whose quotient is torsion free.
\end{proof}

\begin{remark}\label{rem:density}
Density of $R_p(X)$ in $\operatorname{CR}_p(X)$ does not imply that their inclusion is a weak equivalence. All local and homotopy statements here concern the closure with its induced topology, not the unclosed rational equivalence subgroup.
\end{remark}

\subsection{Functoriality and rank bounds}
\begin{proposition}\label{prop:functoriality}
For projective varieties, without smoothness assumptions, proper pushforward, flat pullback of relative dimension $e$, and exterior product with a fixed $q$-cycle give continuous maps
\[
 \begin{split}
 f_* &: \operatorname{CR}_p(X)\longrightarrow\operatorname{CR}_p(Y),\qquad
 f^* : \operatorname{CR}_p(Y)\longrightarrow\operatorname{CR}_{p+e}(X),\\
 (-)\times\zeta &: \operatorname{CR}_p(X)\longrightarrow\operatorname{CR}_{p+q}(X\times Y).
 \end{split}
\]
The groups and maps are independent of the projective embedding. These operations induce maps on $Q_p$ and $\Lambda_p$, and \eqref{eq:exact} and \eqref{eq:higher} are natural for them.
\end{proposition}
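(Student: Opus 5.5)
The plan has three steps: show each operation is a continuous homomorphism of the ambient cycle groups that preserves $R$ and $G$; pass to closures using continuity alone; then read off the induced maps on $Q_p$, $\Lambda_p$ and the exact sequences from naturality of the fibrations $q_p$.

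\emph{Continuity on cycle groups.} For proper pushforward $f_*:Z_p(X)\to Z_p(Y)$, continuity is standard in the Chow topology. I would verify it directly through the compact filtration \eqref{eq:Kd}. Pushforward of effective cycles gives morphisms (or at least continuous maps) $C_{p,a}(X)\to\coprod_{a'\le ca}C_{p,a'}(Y)$ for a constant $c$ depending on the embeddings. Hence $f_*(K_d)\subset K_{cd}$, and the restriction to each compact $K_d$ is continuous. The final topology then gives continuity. The same argument applies to exterior product with a fixed effective $\zeta$: the map $C_{p,a}(X)\to C_{p+q,a\deg\zeta}(X\times Y)$, $u\mapsto u\times\zeta$, is algebraic, and for a general $\zeta$ one splits it into positive and negative parts and uses additivity. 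For flat pullback I would cite the continuity of flat pullback on Chow varieties in \cite{Fri91,LF04}, which gives bounded degree growth on $K_d$ for the relevant flat families.

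\emph{Preservation of the subgroups and passage to closures.} All three operations preserve algebraic equivalence, so they map $G$ to $G$. They also preserve rational equivalence: by the $\mathbb{P}^1$-family description \cite[proof of Theorem 5.7(2)]{Walker}, a generator $W(0)-W(\infty)$ is sent to the difference of fibers of the transformed family. Since the maps are continuous homomorphisms sending $R_p$ into $R$, they send $\operatorname{CR}_p=\overline{R_p}$ into the closure of the target $R$. This gives the continuous restrictions displayed in the statement. They therefore induce continuous homomorphisms of the Hausdorff quotients $Q$ with $q\circ\phi=\bar\phi\circ q$. On $\pi_1$ this gives the maps on $\Lambda$.

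\emph{Naturality of \eqref{eq:exact} and \eqref{eq:higher}.} Each operation gives a morphism of the principal bundles $q_p$ of Theorem~\ref{thm:main}(ii), that is, a commutative square of Hurewicz fibrations with fiber maps the restrictions to $\operatorname{CR}$. The homotopy long exact sequence is natural for maps of fibrations, and the maps on $\pi_*G_p=L_pH_{2p+*}$ are the Lawson functorialities. This yields naturality of $\kappa_p$, of the inclusion of $\pi_1\operatorname{CR}_p$, and of the isomorphisms \eqref{eq:higher}.

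\emph{Embedding independence.} Section~\ref{sec:topology} recalls that the topology on $Z_p(X)$ does not depend on the embedding. Since $R_p$, $G_p$ and the closure are then defined intrinsically, $\operatorname{CR}_p$ and $Q_p$ are independent as well. Alternatively, one can apply the pushforward by the identity between two embeddings.

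The main obstacle is flat pullback for singular $X$ and $Y$. One must check that $f^*$ is continuous in the Chow topology, i.e.\ that it has bounded degree on each $K_d$ and is continuous there. I would handle this by reducing, via the equidimensional-family characterization of the topology (Corollary~3.2.9 and Theorem~3.2.16 of \cite{LF04}), to showing that pullback of an equidimensional family along a flat morphism is again equidimensional over the same base. This holds because flatness preserves fiber dimension.
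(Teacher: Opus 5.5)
Your proposal is correct and follows essentially the same route as the paper. Both arguments use continuity of the three operations on the ambient cycle groups, which the paper simply cites from Lima-Filho rather than re-deriving through the compact filtration; preservation of algebraic and rational equivalence; $h(\overline R)\subset\overline{h(R)}\subset\overline{R'}$; descent to the Hausdorff quotients; and naturality of the homotopy sequences of the fibrations $q_p$. One cosmetic slip: under the Segre embedding the degree of $u\times\zeta$ for effective $\zeta$ is $\binom{p+q}{p}\,a\deg\zeta$, not $a\deg\zeta$, which does not matter since only a bounded degree growth is needed.
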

\begin{proof}
The Chow topology is independent of the embedding, and the identity on cycles preserves both equivalence subgroups and their closures. Proper pushforward and flat pullback are continuous by \cite[Section 3.2, Property 3, p.~78; Corollary 3.2.10, p.~81]{LF04}. Continuity of exterior product is \cite[Claim 3.3.7, p.~89; Section 3.6, item 1, p.~106]{LF04}; restricting one factor gives the operation used here. These operations preserve algebraic and rational equivalence. If $h(R)\subset R'$, continuity gives $h(\overline R)\subset\overline{h(R)}\subset\overline{R'}$. Hence the maps restrict to closures and descend to Hausdorff quotients. Naturality of homotopy exact sequences proves the rest. No assertion about arbitrary intersections on cycle spaces is used.
\end{proof}

\begin{corollary}\label{cor:lawsonbound}
If $\operatorname{rank} L_pH_{2p+1}(X)<\infty$, then $Q_p(X)$ is a finite dimensional compact real torus and
\[
 \dim_{\mathbb{R}}Q_p(X)=\operatorname{rank}\Lambda_p(X)\leq\operatorname{rank} L_pH_{2p+1}(X).
\]
If $L_pH_{2p+1}(X)\otimes\mathbb{Q}=0$, then $\operatorname{CR}_p(X)=G_p(X)$.
\end{corollary}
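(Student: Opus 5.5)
The plan is to read both assertions off the exact sequence \eqref{eq:exact} together with the structure of $Q_p(X)$ in Theorem~\ref{thm:main}(i) and Lemma~\ref{lem:tori}. The sequence shows that $\kappa_p$ is surjective, so $\Lambda_p(X)\otimes\mathbb{Q}$ is a quotient of $L_pH_{2p+1}(X)\otimes\mathbb{Q}$. This gives $\operatorname{rank}\Lambda_p(X)\leq\operatorname{rank}L_pH_{2p+1}(X)$. Since $\Lambda_p(X)$ is free abelian by Theorem~\ref{thm:main}(iii), finite rank means $\Lambda_p(X)\cong\mathbb{Z}^N$ with $N$ finite.

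Next we pass from finite rank of $\Lambda_p(X)$ to finiteness of the exhaustion. By Lemma~\ref{lem:tori}, $\Lambda_p(X)=\operatorname{colim}_dH_1(\mathcal{A}_d,\mathbb{Z})$. Each transition map is a primitive injection, and $\operatorname{rank}H_1(\mathcal{A}_d,\mathbb{Z})=\dim_{\mathbb{R}}\mathcal{A}_d$. Hence the dimensions $\dim\mathcal{A}_d$ form a nondecreasing sequence bounded by $N$, so they stabilize at some $d_0$. For $d\geq d_0$ the inclusion $\mathcal{A}_d\subset\mathcal{A}_{d+1}$ is a closed subgroup of the same dimension, and its cokernel is the torus $\mathcal{A}_{d+1}/\mathcal{A}_d$ of dimension zero. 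Thus the inclusion is an equality, and $Q_p(X)=\mathcal{A}_{d_0}$. The final topology of a stationary exhaustion is the topology of $\mathcal{A}_{d_0}$, so $Q_p(X)$ is a compact real torus of dimension $N=\operatorname{rank}\Lambda_p(X)$. Equivalently, in the weak product \eqref{eq:weakproduct} the index set $I$ has cardinality $N$.

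For the second assertion, suppose $L_pH_{2p+1}(X)\otimes\mathbb{Q}=0$. Then $\Lambda_p(X)$ is a torsion quotient of $L_pH_{2p+1}(X)$. Being free, it must vanish. By the first part, $Q_p(X)$ is then a torus of dimension $0$, which is connected and hence trivial. Triviality of $Q_p(X)=G_p(X)/\operatorname{CR}_p(X)$ gives $\operatorname{CR}_p(X)=G_p(X)$.

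The only step needing care is the stabilization argument: a closed connected subgroup of equal dimension in a torus is the whole torus. This follows directly from the primitivity and quotient-torus statement in the proof of Lemma~\ref{lem:tori}, so I do not expect a genuine obstacle.
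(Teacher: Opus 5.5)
Your proposal is correct and follows the paper's own argument: tensor the exact sequence \eqref{eq:exact} with $\mathbb{Q}$, use that $\Lambda_p(X)$ is free, and read off compactness from the torus structure of $Q_p(X)$. The paper takes that last step directly from the weak product \eqref{eq:weakproduct}, where $|I|=\operatorname{rank}\Lambda_p(X)$; your stabilization of the exhaustion $\mathcal{A}_d$ is a more explicit version of the same step.
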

\begin{proof}
Tensor \eqref{eq:exact} with $\mathbb{Q}$ and use the free quotient $\Lambda_p(X)$ and the weak product description. A free group of rank zero is zero, and its corresponding weak product is trivial. Finite generation of the Lawson group is not required.
\end{proof}

\begin{proposition}\label{prop:lowerbound}
Let $B$ be a finite dimensional compact connected real torus. A continuous surjective homomorphism $\beta:G_p(X)\to B$ that vanishes on $R_p(X)$ induces a surjection
\[
 \Lambda_p(X)\otimes\mathbb{Q}\longrightarrow H_1(B,\mathbb{Q}).
\]
Thus $\operatorname{rank}\Lambda_p(X)\geq\dim_{\mathbb{R}}B$. If such targets have unbounded dimensions for one fixed $X,p$, then $\Lambda_p(X)$ has countably infinite rank.
\end{proposition}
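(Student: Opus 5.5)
The plan is to show that $\beta$ factors through $Q_p(X)$ and then compare lattices. First, since $\beta$ is continuous and vanishes on $R_p(X)$, it vanishes on the closure $\operatorname{CR}_p(X)$. Because $B$ is Hausdorff and $q_p$ is a quotient map, $\beta$ factors uniquely as $\beta=\bar\beta\circ q_p$ with $\bar\beta:Q_p(X)\to B$ a continuous surjective homomorphism. By Proposition~\ref{prop:exhaustion}, $Q_p(X)=\bigcup_d\mathcal{A}_d$ with each $\mathcal{A}_d$ a compact connected torus. Then $\bar\beta(\mathcal{A}_d)$ is an increasing sequence of compact connected subgroups of $B$, hence subtori, whose union is $B$.

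Next I would show that $\bar\beta(\mathcal{A}_d)=B$ for some $d$. A finite dimensional torus has no infinite strictly increasing chain of closed connected subgroups, since the dimension increases at each strict step. So the chain stabilizes at some subtorus $B'$, and $B'=B$ because the union is $B$. This is the only genuinely non-formal point. For robustness one could also argue by Baire category: $B$ is a countable union of the compact sets $\bar\beta(\mathcal{A}_d)$, so one has interior, and a subgroup with interior in a connected group is the whole group. Either route works, and I expect this step to be the main obstacle only in that it must be stated cleanly.

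Then $\bar\beta|_{\mathcal{A}_d}:\mathcal{A}_d\to B$ is a surjective homomorphism of compact tori. Lifting to universal covers, it is a surjective real linear map carrying lattice to lattice, so $H_1(\mathcal{A}_d,\mathbb{Q})\to H_1(B,\mathbb{Q})$ is surjective. Composing with $H_1(\mathcal{A}_d,\mathbb{Z})\to\operatorname*{colim}_e H_1(\mathcal{A}_e,\mathbb{Z})=\Lambda_p(X)$ from Lemma~\ref{lem:tori}, naturality of $\pi_1$ shows that $\bar\beta_*:\Lambda_p(X)\otimes\mathbb{Q}\to H_1(B,\mathbb{Q})$ is surjective. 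This gives $\operatorname{rank}\Lambda_p(X)\geq\dim_{\mathbb{R}}B$.

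The final assertion follows at once: if such $B$ occur with unbounded dimension, then $\operatorname{rank}\Lambda_p(X)$ is not finite. Since $\Lambda_p(X)$ is countable free abelian by Theorem~\ref{thm:main}(iii), its rank is countably infinite.
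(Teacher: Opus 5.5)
Your proposal is correct and follows the paper's proof essentially step for step. You descend $\beta$ to $\bar\beta$ on $Q_p(X)$, stabilize the nested image tori $\bar\beta(\mathcal{A}_d)$ by dimension so that some $\mathcal{A}_d$ surjects onto $B$, rationalize the full-rank lattice map, and invoke the countability from Theorem~\ref{thm:main}; your Baire-category alternative is also valid but not needed.
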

\begin{proof}
The map kills $\operatorname{CR}_p(X)$ by continuity and descends to a continuous surjection $\bar\beta:Q_p(X)\to B$. The nested image tori $\bar\beta(\mathcal{A}_d)$ have bounded dimensions and eventually stabilize, because an inclusion of connected tori of the same dimension is equality. Their union is $B$, so some $\mathcal{A}_d$ surjects onto $B$. A surjection of finite dimensional tori has a full rank lattice map, possibly of finite index. Rationalizing gives the assertion. Countability follows from Theorem~\ref{thm:main}.
\end{proof}
Integral surjectivity is not claimed here: a nontrivial isogeny is the basic obstruction.

\subsection{Global triviality and integral homology}
\begin{lemma}\label{lem:global-section}
Let $q:G\to Q$ be a surjective homomorphism of topological abelian groups which is a Hurewicz fibration. Suppose its kernel $H$ is path connected and $Q$ is a finite or countable weak product of circles with its finite-stage final topology. Then $q$ has a global continuous section $s$ with $s(0)=0$, and
\begin{equation}\label{eq:generaltrivialization}
 H\times Q\xrightarrow{\ \sim\ } G,\qquad (h,t)\longmapsto h+s(t)
\end{equation}
is a homeomorphism and a trivialization of the principal $H$-bundle whenever that bundle structure is given. If $G$ is torsion free and $Q\neq0$, no section is a group homomorphism.
\end{lemma}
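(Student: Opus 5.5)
We build the section one coordinate circle at a time, using the homotopy lifting property of $q$, and then add the coordinate lifts together with the group law. Write $Q=\bigoplus_{j\in I}\mathbb{T}$ and let $\iota_j:\mathbb{T}\to Q$ be the $j$th coordinate inclusion.

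\emph{Lifting one circle.} Regard $\iota_j$ as a loop $\gamma_j:[0,1]\to Q$ based at $0$. Since $q$ is a Hurewicz fibration, $\gamma_j$ lifts to a path $\tilde\gamma_j$ in $G$ with $\tilde\gamma_j(0)=0$. Its endpoint $h_j=\tilde\gamma_j(1)$ lies in $H$. Because $H$ is path connected, choose a path $c_j$ in $H$ from $h_j$ to $0$. Concatenating $\tilde\gamma_j$ with $c_j$, and reparametrizing so that $c_j$ is traversed over a terminal interval on which we use the constant loop at $0$ downstairs, gives a closed lift. More cleanly, $\tilde\gamma_j(t)-t\,\cdot$ cannot be used in general, so we instead apply homotopy lifting to the homotopy between $\gamma_j$ and $\gamma_j$ followed by the constant path. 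This produces a lift $\sigma_j:[0,1]\to G$ with $\sigma_j(0)=\sigma_j(1)=0$ and $q\sigma_j=\gamma_j$, so it descends to a continuous $s_j:\mathbb{T}\to G$ with $q s_j=\iota_j$ and $s_j(0)=0$. This step is where path connectedness of $H$ is used; it is the only obstruction, namely the connecting map $\pi_1Q\to\pi_0H$.

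\emph{Assembling and checking continuity.} Define $s((t_j))=\sum_j s_j(t_j)$, which is a finite sum because the point has finite support. Then $q s=\operatorname{id}$ because $q$ is a homomorphism, and $s(0)=0$. On each finite coordinate torus, $s$ is a finite sum of continuous maps composed with addition in $G$, so it is continuous. By the finite-stage final topology, exactly as in the proof of Proposition~\ref{prop:localsections}, $s$ is continuous on $Q$. I expect this final-topology bookkeeping to be the main point requiring care. It is not a real obstruction, however, since the topology on $Q$ is by hypothesis the colimit topology.

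\emph{The trivialization and the non-homomorphism claim.} The map $(h,t)\mapsto h+s(t)$ is continuous. Its inverse $g\mapsto(g-s(q(g)),q(g))$ is continuous and lands in $H\times Q$, so the map is a homeomorphism. It is $H$-equivariant and commutes with the projection to $Q$, hence it trivializes the principal bundle. Finally, suppose $s$ were a homomorphism and $Q\neq0$. Pick a coordinate $j$ and a point $t$ of order $2$ in the $j$th circle. Then $2s(t)=s(2t)=0$, while $s(t)\neq0$ because $q s(t)=t\neq0$. So $s(t)$ is a nonzero torsion element of $G$, contradicting the assumption that $G$ is torsion free.
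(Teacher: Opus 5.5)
Your outline matches the paper's proof at every step except the closing-up of the lifted loop, and that step, as written, does not deliver what you claim. Suppose you lift a homotopy $F$ between $\gamma_j$ and $\gamma_j$ followed by the constant path, starting from the closed lift $\tilde\gamma_j\ast c_j$, using only the homotopy lifting property. Then the two side edges $u\mapsto\widetilde F(0,u)$ and $u\mapsto\widetilde F(1,u)$ are merely lifts of the constant path at $0$, i.e.\ arbitrary paths in $H$. The resulting lift of $\gamma_j$ again has its endpoints somewhere in $H$, so you are back where you started. To get $\sigma_j(0)=\sigma_j(1)=0$ you must lift relative to $\partial I$, prescribing the lift to be $0$ on $\partial I\times I$. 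That is legitimate for a Hurewicz fibration, because $(I\times I,\,I\times\{0\}\cup\partial I\times I)$ is homeomorphic to $(I\times I,\,I\times\{0\})$, but it has to be stated.

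The paper avoids the second lifting altogether by using the group structure. This is exactly the idea you set aside with ``$\tilde\gamma_j(t)-t\,\cdot$ cannot be used'': instead of $t\cdot h_j$, subtract a path $\delta_j$ in $H$ from $0$ to $h_j$. Since $q$ is a homomorphism that kills $H$, the path $t\mapsto\tilde\gamma_j(t)-\delta_j(t)$ still maps to $\gamma_j$ and starts and ends at $0$, so it descends to $s_j$ directly. With either repair, the rest of your proof is the same as the paper's and is fine: finite sums on finite coordinate tori, the final topology, the inverse $g\mapsto(g-s(q(g)),q(g))$, and the order-two element forcing torsion in $G$.
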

\begin{proof}
Write $Q=\mathop{\bigoplus}\nolimits_{j\in I}\mathbb{T}$ and let $\iota_j:\mathbb{T}\to Q$ be the coordinate inclusion. The path $\gamma_j(t)=\iota_j(t\bmod\mathbb{Z})$, $0\leq t\leq1$, lifts to a path $\widetilde\gamma_j$ in $G$ starting at zero. Its endpoint $h_j$ belongs to $H$. Choose a path $\delta_j$ in $H$ from $0$ to $h_j$. Then
\[
 t\longmapsto\widetilde\gamma_j(t)-\delta_j(t)
\]
starts and ends at zero and still maps to $\gamma_j$. It descends to a based continuous map $s_j:\mathbb{T}\to G$ with $qs_j=\iota_j$.

For a finite-support point $(t_j)$, set $s((t_j))=\sum_js_j(t_j)$. This is well defined since $s_j(0)=0$. On each finite coordinate torus it is a finite sum of continuous maps; the final topology therefore proves its global continuity. It satisfies $qs=\operatorname{id}$ and $s(0)=0$. The inverse of \eqref{eq:generaltrivialization} is $g\mapsto(g-s(q(g)),q(g))$, which is continuous. Translation in the first factor proves equivariance for the principal action.

A nonzero weak product of circles has nonzero torsion. A group section would be injective and send such torsion into the torsion free group $G$, a contradiction.
\end{proof}

The cycle group $Z_p(X)$ is free abelian on the irreducible $p$-dimensional subvarieties, so its subgroup $G_p(X)$ is torsion free. This verifies the final hypothesis when the lemma is used in Theorem~\ref{thm:main}. In particular, global topological triviality must not be written as a splitting of topological groups.

\begin{corollary}\label{cor:homologyproduct}
For every $k\geq0$ there are noncanonical integral isomorphisms
\begin{equation}\label{eq:homologyproduct}
 H_k(G_p(X),\mathbb{Z})\cong
 \bigoplus_{a=0}^k H_{k-a}(\operatorname{CR}_p(X),\mathbb{Z})\otimes_{\mathbb{Z}}
 \bigwedge\nolimits_{\mathbb{Z}}^{a}\Lambda_p(X).
\end{equation}
\end{corollary}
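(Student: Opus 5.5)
The plan is to use the global homeomorphism \eqref{eq:globalproduct} of Theorem~\ref{thm:main}(ii), $G_p(X)\cong\operatorname{CR}_p(X)\times Q_p(X)$, and then apply a Künneth theorem. Since this is only a homeomorphism of spaces and not of groups, we work with singular homology of spaces and never with group structure. By Lemma~\ref{lem:tori}, $Q_p(X)$ is a countable CW complex, namely the weak product $\bigoplus_{j\in I}\mathbb{T}$, and $\operatorname{CR}_p(X)$ has CW homotopy type by Theorem~\ref{thm:main}(ii). Singular homology of a product of spaces is computed by the Eilenberg--Zilber theorem, so
\[
 H_k(G_p(X),\mathbb{Z})\cong H_k\bigl(S_*(\operatorname{CR}_p(X))\otimes S_*(Q_p(X))\bigr).
\]
No CW product topology issue arises here, because singular chains are used directly.

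Next I will compute $H_*(Q_p(X),\mathbb{Z})$. Each finite coordinate torus $\mathbb{T}^N$ has homology $\bigwedge^*_{\mathbb{Z}}\mathbb{Z}^N$, which is free. A compact subset of a CW complex lies in a finite subcomplex, so homology commutes with the colimit over the finite coordinate tori. The inclusions correspond to the coordinate inclusions $\mathbb{Z}^N\hookrightarrow\mathbb{Z}^{N'}$. Hence
\[
 H_a(Q_p(X),\mathbb{Z})\cong\operatorname*{colim}_N\bigwedge\nolimits^a\mathbb{Z}^N\cong\bigwedge\nolimits^a_{\mathbb{Z}}\Lambda_p(X),
\]
using $\Lambda_p(X)=\pi_1Q_p(X)=H_1(Q_p(X))$ from Lemma~\ref{lem:tori} and Theorem~\ref{thm:main}(iii). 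This group is free abelian, being a colimit of split injections of free groups with free cokernels, or directly because it has a basis of wedge monomials in the chosen basis of $\Lambda_p(X)$.

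Then I will apply the algebraic Künneth formula. Since $H_*(Q_p(X))$ is free, and $S_*(Q_p(X))$ is a free complex, all Tor terms vanish. One way to see this is to note that $S_*(Q_p(X))$ is chain equivalent to its homology, viewed as a complex with zero differential, because the homology is free. The Künneth sequence therefore collapses to
\[
 H_k(G_p(X))\cong\bigoplus_{a+b=k}H_b(\operatorname{CR}_p(X))\otimes H_a(Q_p(X)),
\]
which is \eqref{eq:homologyproduct}. The isomorphism is noncanonical because the section, and hence the homeomorphism, is noncanonical; this also explains why the identification of $\Lambda_p$ uses chosen bases.

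The main point to check is the colimit step for the infinite weak product. I must verify that singular homology of $Q_p(X)$ is the colimit of the homology of the finite tori $\mathcal{A}_d$ in the final topology. This follows from compactness of singular simplices and the CW structure of Lemma~\ref{lem:tori}. No finiteness hypothesis on $\operatorname{CR}_p(X)$ is needed, since freeness is needed only on the $Q_p(X)$ side.
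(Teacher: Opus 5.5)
Your proposal is correct and follows the paper's argument. Both apply the global homeomorphism \eqref{eq:globalproduct}, identify $H_*(Q_p(X),\mathbb{Z})$ with the free exterior algebra on $\Lambda_p(X)$ by computing on finite coordinate tori and passing to their union, and then use integral K\"unneth with vanishing $\operatorname{Tor}$ terms. You simply make explicit the Eilenberg--Zilber and free-complex details that the paper leaves implicit.
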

\begin{proof}
The homology of the weak product of circles is the exterior algebra on $\Lambda_p(X)$, as is seen on finite coordinate tori and then by passage to their union. It is free over $\mathbb{Z}$ in every degree. Apply integral K\"unneth to \eqref{eq:globalproduct}; all $\operatorname{Tor}^{\mathbb{Z}}_1$ terms vanish. The isomorphisms depend on the chosen section, even though the exact sequence \eqref{eq:exact} is canonical.
\end{proof}

\subsection{Hurewicz maps and higher Hodge structures}
\begin{corollary}\label{cor:hodgeclosure}
For every $m\geq1$, the Hurewicz homomorphism
\[
 h_m:\pi_m\operatorname{CR}_p(X)\longrightarrow
 H_m(\operatorname{CR}_p(X),\mathbb{Z})
\]
is split injective. For $m\geq2$, the canonical isomorphism \eqref{eq:higher} equips $\pi_m\operatorname{CR}_p(X)$ with a functorial structure of a filtered colimit of integral mixed Hodge structures. Proper pushforward and flat pullback preserve these structures. If $\mathbb{Z}(t)=(2\pi i)^t\mathbb{Z}$ denotes the Tate structure, the composite
\begin{equation}\label{eq:closurehodgecycle}
 \pi_m\operatorname{CR}_p(X)\xrightarrow{\ \sim\ }L_pH_{2p+m}(X)
 \xrightarrow{\ \Phi_{p,2p+m}\ }H_{2p+m}(X,\mathbb{Z})(-p)
\end{equation}
is a morphism of such colimits.
\end{corollary}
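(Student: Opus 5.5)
The plan is to get split injectivity of the Hurewicz map from Moore's splitting, and the Hodge-theoretic assertions by transporting the known structure on Lawson homology through \eqref{eq:higher}.

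\emph{Split injectivity.} By Corollary~\ref{cor:moore}, $\operatorname{CR}_p(X)$ is weakly equivalent to a product $P=\prod_{m}K(B_m,m)$ of Eilenberg--Mac Lane spaces, with $B_m=\pi_m\operatorname{CR}_p(X)$. I would use the Moore splitting in its realized form: for a topological abelian group of CW type, one can choose based maps $K(B_m,m)\to\operatorname{CR}_p(X)$ inducing the identity on $\pi_m$. Since the group is abelian, their pointwise sum gives a weak equivalence from the weak product. Take $M(B_m,m)\subset K(B_m,m)$ as the $(m+1)$-skeleton, a Moore space. The composite $M(B_m,m)\to K(B_m,m)\to\operatorname{CR}_p(X)$ realizes the identity on $\pi_m$. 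Naturality of the Hurewicz map reduces the claim to the Hurewicz map of $K(B_m,m)$ for $m\geq1$. For $m=1$, $B_1$ is abelian, so $\pi_1\to H_1$ is an isomorphism. For $m\geq2$, the Hurewicz theorem gives an isomorphism $B_m\cong H_m(K(B_m,m))$. Next I would use the projection $P\to K(B_m,m)$. It is a map of spaces whose composite with the inclusion $K(B_m,m)\to P$ is the identity. On $H_m$ this retraction gives $r:H_m(\operatorname{CR}_p(X))\to H_m(K(B_m,m))\cong B_m$ with $r\circ h_m=\operatorname{id}$. Weak equivalences induce isomorphisms on integral homology, so passing between $P$ and $\operatorname{CR}_p(X)$ is harmless. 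This gives the splitting.

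\emph{Hodge structures.} For $m\geq2$, I would transport structure along \eqref{eq:higher}. Lawson homology $L_pH_{2p+m}(X)$ of a projective variety carries a structure as a filtered colimit of integral mixed Hodge structures. I would take this from the standard construction via the compact filtration $K_d$, or via the Friedlander--Walker/Dold--Thom presentation as a colimit of singular homology of compact algebraic parameter spaces. Functoriality for proper pushforward and flat pullback on the Lawson side, together with the naturality of \eqref{eq:higher} from Proposition~\ref{prop:functoriality}, then gives functoriality on $\pi_m\operatorname{CR}_p(X)$. The Tate twist $(-p)$ enters in the standard way in the cycle map $\Phi_{p,2p+m}$: it is the composite of $s$-map iterations with Dold--Thom. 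The $s$-map, i.e.\ slant product with $\mathbb{P}^1$ classes, shifts weight by $\mathbb{Z}(1)$ at each step.

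\emph{Main obstacle.} The main obstacle is making the mixed Hodge structure on Lawson homology precise and showing that $\Phi$ is a morphism of colimits. My approach would be to write $\pi_m G_p(X)$ as the colimit over $d$ of images of $\pi_m$ of compact parameter spaces. Concretely, I would use maps from spheres into $K_d$, which factor through the homology of Chow varieties $C_{p,a}\times C_{p,b}$, each carrying Deligne's mixed Hodge structure. Surjectivity onto $\pi_m$ in a stable range would come from the CW structure. The cycle map at each finite stage is induced by the universal family correspondence, an algebraic cycle on $C\times X$, and so is a morphism of Hodge structures with the expected twist. Passing to the colimit then gives the claim for \eqref{eq:closurehodgecycle}.
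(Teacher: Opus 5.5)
Your proposal is correct. The Hodge-theoretic part follows the paper; the split injectivity takes a different route.

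\textbf{Split injectivity.} The paper does not use Corollary~\ref{cor:moore} here. It first checks that the closed subgroup $\operatorname{CR}_p(X)$ carries the topology final for the compact sets $\operatorname{CR}_p(X)\cap K_d$, so that \cite[Lemma 3.5.9]{LF04} applies. It then takes from that proof an explicit retraction $r_m:H_m(\operatorname{CR}_p(X),\mathbb{Z})\to\pi_m\operatorname{CR}_p(X)$, built from the summation maps on infinite symmetric products.

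You instead derive the splitting formally from the Eilenberg--Mac Lane decomposition. Projection onto $K(B_m,m)$ and the Hurewicz isomorphism for $K(B_m,m)$ (using that $B_1$ is abelian) give a left inverse. This avoids the compact-filtration check. It also needs less than you invoke: split injectivity of a Hurewicz map is invariant under zigzags of weak equivalences. So the weak homotopy type in Corollary~\ref{cor:moore} already suffices, and you do not need realized maps $K(B_m,m)\to\operatorname{CR}_p(X)$. The Moore-space sentence plays no role and can be dropped.

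What the paper's route buys is a canonical retraction, natural for continuous homomorphisms such as those of Proposition~\ref{prop:functoriality}. Yours depends on a noncanonical splitting. For the statement as posed, which only asserts split injectivity, either suffices.

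\textbf{Hodge assertions.} Your main argument is the paper's. You cite the filtered-colimit structure; the paper uses \cite[Definition 3.5.6 and Theorem 3.5.7]{LF04}, defined on the homology of the cycle space and passed to Lawson homology through the Hurewicz image. You transport it along \eqref{eq:higher} using naturality. You obtain the twist $(-p)$ by composing $p$ $s$-maps $L_aH_k\to L_{a-1}H_k(-1)$, whose composite is the cycle map.

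The from-scratch mechanism in your last paragraph would not work as written. A sphere map into $K_d$ need not factor through $C_{p,a}(X)\times C_{p,b}(X)$: the presentation map in \eqref{eq:Kd} is only a quotient map, not a fibration. Likewise, $H_*(K_d)$ need not be an image of the homology of those Chow varieties. For the compatibility of \eqref{eq:closurehodgecycle}, rely on the $s$-map factorization, as the paper does, rather than a universal-family correspondence on the singular Chow varieties.
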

\begin{proof}
Set $H=\operatorname{CR}_p(X)$. Since $H$ is closed in $Z_p(X)$, its induced topology is final for the compact sets $H\cap K_d$: a subset closed on every $H\cap K_d$ is closed on every $K_d$, and hence closed in $Z_p(X)$. Thus the compact-filtration hypothesis of \cite[Lemma 3.5.9, pp.~102--103]{LF04} is satisfied. The statement there asserts injectivity, and its proof constructs a homomorphism
\[
 r_m:H_m(H,\mathbb{Z})\longrightarrow\pi_mH,
 \qquad r_m\circ h_m=\operatorname{id},
\]
using the summation maps from infinite symmetric products. Hence the Hurewicz map is split injective.

Definition~3.5.6 and Theorem~3.5.7(i)--(iii) of \cite[p.~100]{LF04} give filtered colimits of integral mixed Hodge structures on the homology of the cycle space, and then on Lawson homology through its Hurewicz image. Their proper and flat functoriality passes to $\pi_mH$, for $m\geq2$, through the canonical isomorphism \eqref{eq:higher}. This is a filtered-colimit structure; no finite-generation assertion is involved.

For $a\geq1$, the $s$-map in \cite[Proposition 3.5.10(ii) and its proof, p.~104]{LF04} is a morphism
\[
 L_aH_k(X)\longrightarrow L_{a-1}H_k(X)(-1).
\]
Set $k=2p+m$ and compose for $a=p,p-1,\ldots,1$. The composite is the cycle map by \cite[Definition 3.4.1(2), p.~93; Theorem 3.4.3, p.~94]{LF04}. Under Dold--Thom, its target is $H_{2p+m}(X,\mathbb{Z})(-p)$, which gives \eqref{eq:closurehodgecycle}; for $p=0$ the twist is zero.
\end{proof}
This construction concerns higher homotopy groups. It does not endow the full quotient lattice $\Lambda_p(X)$ with a mixed Hodge structure, nor does it construct such a structure on the homology of the closure in all degrees.

\section{A torsion criterion, applications, and periods}\label{sec:torsion}
For smooth projective $X$, a homomorphism $\rho:A_p(X)\to A(\mathbb{C})$ to an abelian variety is regular if its evaluation on every cycle correspondence over a smooth pointed parameter variety is algebraic; fibers are refined Gysin fibers, as in \cite[Section 3.1]{ACMV17}. In particular, for a Chow family $f:T\to C_{p,d}(X)$ over a smooth pointed variety $(T,t_0)$, the map
\[
 t\longmapsto\rho\bigl([f(t)-f(t_0)]\bigr)
\]
is a morphism. For singular $X$ we use this Chow-family condition as the definition of regularity. Signed families are treated by addition. The cycle realization is the composite $\beta:G_p(X)\to A_p(X)\xrightarrow{\rho}A(\mathbb{C})$.

\begin{lemma}\label{lem:regularcontinuity}
Every regular homomorphism in the preceding sense has continuous cycle realization. It therefore annihilates $D_p(X)$ on Chow classes and factors continuously through $Q_p(X)$ on cycles.
\end{lemma}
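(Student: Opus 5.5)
The plan is to prove continuity of $\beta=\rho\circ(G_p(X)\to A_p(X))$ one compact stage at a time, and then read off the two remaining assertions formally. By Section~\ref{sec:topology}, $G_p(X)$ carries the final topology of the compact sets $K_d\cap G_p(X)$. It therefore suffices to show that $\beta$ restricted to each $K_d\cap G_p(X)$ is continuous.

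Fix $d$. As in the proof of Proposition~\ref{prop:exhaustion}, only finitely many irreducible Chow components $V$ occur in \eqref{eq:Kd}. For each one we take a resolution $T_V\to V$ with base point $t_V$ and the associated family $f_V$. Regularity makes $t\mapsto\rho([f_V(t)-f_V(t_V)])$ a morphism $T_V\to A$, hence continuous in the analytic topology. For a pair $(V,W)$ whose differences are algebraically trivial, consider the continuous surjection
\[
 T_V\times T_W\to K_d\cap G_p(X),\qquad (t,t')\mapsto f_V(t)-f_W(t').
\]
Its composite with $\beta$ equals
\[
 \rho[f_V(t)-f_V(t_V)]-\rho[f_W(t')-f_W(t_W)]+c_{VW},
\]
where $c_{VW}=\rho[f_V(t_V)-f_W(t_W)]$ is a constant. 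This expression is continuous, and here additivity of $\rho$ on Chow classes is what we use. The source is the finite disjoint union of the compact spaces $T_V\times T_W$, which maps continuously onto a compact Hausdorff space. That map is therefore a quotient map, so $\beta$ is continuous on $K_d\cap G_p(X)$.

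One technical point needs checking: the map from $\coprod T_V\times T_W$ must genuinely surject onto $K_d\cap G_p(X)$, including the pairs of degrees $a+b\le d$ with the zero cycle allowed. This holds because resolutions are surjective, and degree-zero components are single points. For singular $X$, the Chow-family definition of regularity is exactly what is used above, so no smoothness is needed. This step, reconciling the Chow-family condition with the components used in \eqref{eq:Kd}, is the only real obstacle. The rest is formal.

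With continuity in hand, the remaining claims follow quickly. The realization $\beta$ vanishes on $R_p(X)$ because $\rho$ is defined on $A_p(X)=G_p(X)/R_p(X)$. Since $\beta$ is continuous and $A(\mathbb{C})$ is Hausdorff, $\ker\beta$ is closed, so it contains $\operatorname{CR}_p(X)$. Hence $\rho$ kills $D_p(X)=\operatorname{CR}_p(X)/R_p(X)$. Finally, $\beta$ factors through the quotient $Q_p(X)=G_p(X)/\operatorname{CR}_p(X)$, and the induced homomorphism is continuous by the universal property of the quotient topology, which Proposition~\ref{prop:exhaustion} identifies with the final topology of the tori $\mathcal{A}_d$.
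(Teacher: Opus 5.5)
Your proposal is correct and follows the paper's proof essentially step for step. Both arguments cover each compact stage by finitely many resolved Chow-component pairs and use regularity plus additivity of $\rho$ to write $\beta$ on $T_V\times T_W$ as a sum of morphisms and a constant. Both then pass to $K_d\cap G_p(X)$ through the compact-to-Hausdorff quotient map, use the final topology, and conclude from closedness of $\ker\beta$.

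The paper makes two details more explicit. First, a pair $(V,W)$ has one algebraically trivial difference if and only if all its differences are, because irreducible components are connected. This is what guarantees that your chosen pairs cover all of $K_d\cap G_p(X)$. Second, for smooth $X$ it notes that the graph-cycle and refined-fiber definition yields the Chow-family condition you invoke.
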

\begin{proof}
Fix a compact stage $K_d\cap G_p(X)$ of \eqref{eq:Kd}. Its presentation uses finitely many pairs of irreducible projective Chow components $V,W$ such that their differences are algebraically trivial. Choose smooth projective resolutions $T_V\to V$, $T_W\to W$, with base points. The pulled-back family maps are denoted by $f_V,f_W$. On $T_V\times T_W$ the realization is
\[
 \rho([f_V(t)-f_V(t_0)])-
 \rho([f_W(u)-f_W(u_0)])+
 \beta(f_V(t_0)-f_W(u_0)),
\]
a continuous map by regularity. In the usual smooth-target definition, the Chow family is represented by its graph cycle, whose refined fibers give the corresponding Chow classes; alternatively one may first flatten the graph by a projective modification and resolve. This does not change the proper surjective cover of the compact stage.

The chosen pairs are exactly those for which one, and therefore every, cycle difference is algebraically trivial. Their finite disjoint union maps continuously and surjectively onto $K_d\cap G_p(X)$. It is compact and the target is Hausdorff, so this is a quotient map. The displayed maps agree on fibers, since $\rho$ is defined on Chow classes. They therefore descend continuously to this compact stage. The final topology proves continuity on $G_p(X)$. Since $\beta$ vanishes on $R_p(X)$ and its target is Hausdorff, it vanishes on $\operatorname{CR}_p(X)$. The factorization and the assertion on $D_p(X)$ follow.
\end{proof}

\begin{theorem}\label{thm:torsioncriterion}
Let $X$ be connected projective, $0\leq p<\dim X$, and let $\rho:A_p(X)\to A(\mathbb{C})$ be a surjective regular homomorphism to a complex abelian variety. Let $\beta:G_p(X)\to A(\mathbb{C})$ be its cycle realization, continuous by Lemma~\ref{lem:regularcontinuity}. If $\rho$ is injective on torsion, then
\begin{equation}\label{eq:criterion}
 \operatorname{CR}_p(X)=\ker\beta,\qquad \bar\beta:Q_p(X)\xrightarrow{\ \sim\ } A(\mathbb{C})
\end{equation}
as topological groups. In particular,
\begin{equation}\label{eq:criterionexact}
 0\longrightarrow\pi_1\operatorname{CR}_p(X)\longrightarrow L_pH_{2p+1}(X)
 \xrightarrow{\ \beta_*\ }H_1(A,\mathbb{Z})\longrightarrow0.
\end{equation}
\end{theorem}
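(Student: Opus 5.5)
By Lemma~\ref{lem:regularcontinuity}, $\beta$ is continuous and vanishes on $\operatorname{CR}_p(X)$. So $\operatorname{CR}_p(X)\subset\ker\beta$, and $\beta$ descends to a continuous homomorphism $\bar\beta:Q_p(X)\to A(\mathbb{C})$. Surjectivity of $\rho$ gives surjectivity of $\bar\beta$. The whole argument therefore reduces to two things: the reverse inclusion $\ker\beta\subset\operatorname{CR}_p(X)$, and the fact that the continuous bijection $\bar\beta$ is a homeomorphism. The exact sequence \eqref{eq:criterionexact} then follows by substituting $Q_p(X)\cong A(\mathbb{C})$ into \eqref{eq:exact}, using $\Lambda_p(X)=\pi_1Q_p(X)\cong H_1(A,\mathbb{Z})$ and $\kappa_p=\bar\beta_*^{-1}\beta_*$.

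For the reverse inclusion I would imitate Proposition~\ref{prop:zero-closure}, approximating by torsion. Take $z\in\ker\beta$ and write it by the curve presentation \eqref{eq:curvepresentation} as $z=\sum_i\epsilon_i\Gamma_{f_i}(u_i)$ with $u_i\in Z_0(D_i)^0$. Set $B=\prod_iJ(D_i)$ and let $\phi:B\to A$ be the induced map $(x_i)\mapsto\sum_i\epsilon_i\rho(\Gamma_{f_i}(x_i))$. This map is a homomorphism of abelian varieties: regularity makes it a morphism, and it is additive, so it is a homomorphism after normalizing at zero. The point $b=(a_{D_i}(u_i))$ lies in $K=\ker\phi$, an algebraic subgroup that may be disconnected.

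As in Proposition~\ref{prop:zero-closure}, torsion points are dense in every component of $K$, so I choose torsion points $b_\nu\to b$ in $K$. Using local sections of the Abel maps (Lemma~\ref{lem:albsections}) through the given $u_i$, I lift these to $u_{i,\nu}\to u_i$ in a bounded degree stage with $a_{D_i}(u_{i,\nu})=b_{\nu,i}$. Put $z_\nu=\sum_i\epsilon_i\Gamma_{f_i}(u_{i,\nu})$. By continuity of $\Gamma_{f_i}$, $z_\nu\to z$. Each $b_{\nu,i}$ is torsion in $J(D_i)$, so there is a single $N$ with $N b_{\nu,i}=0$ for all $i$. Then $Nu_{i,\nu}$ is rationally trivial on $D_i$, hence $N[z_\nu]=0$ in $A_p(X)$. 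Moreover $\rho([z_\nu])=\phi(b_\nu)=0$. Torsion injectivity of $\rho$ now gives $[z_\nu]=0$, i.e.\ $z_\nu\in R_p(X)$, and therefore $z\in\operatorname{CR}_p(X)$.

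The main obstacle is this reverse inclusion, specifically making sure the presentation and the map $\phi$ are honest. One needs $\rho\circ\Gamma_{f_i}$ to factor through $J(D_i)$ as a homomorphism of abelian varieties, so that $K$ is an algebraic subgroup with dense torsion in each component. This should follow from regularity together with $\rho$ being defined on Chow classes, since principal divisors go to rationally trivial cycles by Lemma~\ref{lem:familyfactor}. The homeomorphism claim is routine by comparison. By Proposition~\ref{prop:exhaustion}, $\bar\beta$ restricts on each compact torus $\mathcal{A}_d$ to a continuous injection into Hausdorff $A(\mathbb{C})$, so $\bar\beta|_{\mathcal{A}_d}$ is an embedding. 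Since $\bar\beta$ is bijective and continuous and each $\mathcal{A}_d\cong\bar\beta(\mathcal{A}_d)$ is a closed subtorus of increasing but bounded dimension, the chain stabilizes, as in Proposition~\ref{prop:lowerbound}. Some $\mathcal{A}_d$ therefore maps onto $A(\mathbb{C})$, and injectivity forces $Q_p(X)=\mathcal{A}_d$. Hence $Q_p(X)$ is compact, and $\bar\beta$ is a continuous bijection from a compact space to a Hausdorff space, i.e.\ a topological isomorphism.
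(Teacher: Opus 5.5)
Your proposal is correct and follows the paper's proof essentially step for step. The reverse inclusion comes from approximating the tuple $(a_{D_i}(u_i))$ by torsion points of $\ker\bigl(\prod_iJ(D_i)\to A\bigr)$, lifting them through Abel-map local sections, and applying torsion injectivity; the homeomorphism and \eqref{eq:criterionexact} then follow from stabilization of the image tori $\bar\beta(\mathcal{A}_d)$, the compact-to-Hausdorff bijection, and transporting \eqref{eq:exact} through $\bar\beta$.
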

\begin{proof}
Continuity and rational invariance give $\operatorname{CR}_p(X)\subset\ker\beta$. Given $z\in\ker\beta$, use \eqref{eq:curvepresentation}, absorbing signs into the homomorphisms. Regularity and the universal property of each Jacobian show that the corresponding maps $J(D_i)\to A$ are algebraic homomorphisms. Their sum is an algebraic homomorphism $b:B=\prod_iJ(D_i)\to A$, and the tuple $t=([u_i])$ belongs to $\ker b$.

Torsion points are dense in each component of this algebraic subgroup, by the argument in Proposition~\ref{prop:zero-closure}. Choose torsion $t_\nu\to t$ in $\ker b$. Write $u_i=u_i^+-u_i^-$ and add fixed effective divisors $E_i$ to reach the Abel projective-bundle range. Local sections through $u_i^++E_i$, keeping $u_i^-+E_i$ fixed, lift the coordinates of $t_\nu$ to cycles $u_{i,\nu}\to u_i$. Thus
\[
 z_\nu=\sum_i\Gamma_{f_i}(u_{i,\nu})\longrightarrow z,
 \qquad [z_\nu]\in A_p(X)_{\mathrm{tors}},\qquad \rho([z_\nu])=0.
\]
Torsion injectivity gives $z_\nu\in R_p(X)$. All degree bounds were fixed first, so convergence takes place in a single bounded stage, proving the reverse inclusion.

The induced $\bar\beta$ is a continuous bijection. The nested image tori $\bar\beta(\mathcal{A}_d)$ eventually stabilize by bounded dimension and cover $A$, so one of them is $A$. Injectivity then forces $Q_p(X)=\mathcal{A}_d$. The compact-to-Hausdorff bijection is a homeomorphism. Formula \eqref{eq:criterionexact} is \eqref{eq:exact} transported through this isomorphism.
\end{proof}
The theorem identifies the specified map, not merely the abstract isomorphism class of its target. Path connectedness, local contractibility, and the higher isomorphisms already hold without its torsion hypothesis.

\Needspace{6\baselineskip}
\subsection{Canonical applications}
\begin{corollary}\label{cor:classical}
For smooth connected projective $X$ of dimension $n\geq1$, the canonical Albanese, Picard, and codimension two Abel--Jacobi maps induce topological group isomorphisms
\begin{equation}\label{eq:classical}
 \begin{split}
 Q_0(X)&\cong\operatorname{Alb}(X),\qquad Q_{n-1}(X)\cong\operatorname{Pic}^0(X),\\
 Q_{n-2}(X)&\cong J_a^3(X)\qquad(n\geq3).
 \end{split}
\end{equation}
Each closure is the corresponding cycle kernel, and $\operatorname{CR}_{n-1}(X)=R_{n-1}(X)$. Consequently
\[
 \Lambda_0(X)\cong H_1(X,\mathbb{Z})_{\mathrm{free}},\quad
 \Lambda_{n-1}(X)\cong H^1(X,\mathbb{Z}),\quad
 \Lambda_{n-2}(X)\cong H_1(J_a^3(X),\mathbb{Z})\ (n\geq3).
\]
\end{corollary}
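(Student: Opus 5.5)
The plan is to apply Theorem~\ref{thm:torsioncriterion} three times, once for each classical regular homomorphism. In each case I must check three things: the map is surjective onto an abelian variety, it is regular in the sense of Section~\ref{sec:torsion}, and it is injective on torsion of $A_p(X)$. The theorem then gives $\operatorname{CR}_p(X)=\ker\beta$ and the topological isomorphism $\bar\beta:Q_p(X)\cong A(\mathbb{C})$. Afterwards $\Lambda_p(X)=\pi_1Q_p(X)\cong H_1(A,\mathbb{Z})$ is read off directly.

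For $p=0$, take $\rho$ to be the Albanese map $A_0(X)\to\operatorname{Alb}(X)$. Regularity is the universal property of the Albanese map on families of zero-cycles, and surjectivity follows from Lemma~\ref{lem:albsections}. Torsion injectivity is Roitman's theorem, already used in Proposition~\ref{prop:zero-closure}. In fact, for $T=X$, Proposition~\ref{prop:zero-closure} directly gives $\operatorname{CR}_0(X)=\ker a_X$. Then $\Lambda_0=H_1(\operatorname{Alb}(X),\mathbb{Z})=H_1(X,\mathbb{Z})_{\mathrm{free}}$, as in the proof of Proposition~\ref{prop:albkernel}.

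For $p=n-1$, use the identification $A_{n-1}(X)=\operatorname{Pic}^0(X)$. On a smooth variety, divisors modulo rational equivalence are line bundles, and algebraically trivial divisors give exactly $\operatorname{Pic}^0$. So $\rho$ is an isomorphism, in particular injective on torsion and surjective. Regularity is the fact that a family of divisors over $T$ gives an algebraic map $T\to\operatorname{Pic}^0(X)$ through the Poincaré bundle. Theorem~\ref{thm:torsioncriterion} gives $\operatorname{CR}_{n-1}=\ker\beta$, and $\ker\beta=R_{n-1}(X)$ because $\rho$ is injective. This is the claim $\operatorname{CR}_{n-1}(X)=R_{n-1}(X)$. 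For the lattice, I would use $H_1(\operatorname{Pic}^0(X),\mathbb{Z})\cong H^1(X,\mathbb{Z})$, which comes from the exponential sequence presentation $\operatorname{Pic}^0(X)=H^1(X,\mathcal{O})/H^1(X,\mathbb{Z})$. Since $H^1(X,\mathbb{Z})$ is torsion free, no further adjustment is needed.

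For $p=n-2$ and $n\ge3$, take $\rho$ to be the Abel--Jacobi map on codimension two algebraically trivial cycles, with image the algebraic intermediate Jacobian $J_a^3(X)$. By definition $J_a^3(X)$ is the image of $A^2(X)$, so $\rho$ is surjective. Regularity is Griffiths' result that Abel--Jacobi maps on families are holomorphic, hence algebraic into the abelian subtorus $J_a^3$. The main obstacle is torsion injectivity. Here I would invoke Merkurjev--Suslin via Bloch's argument (Colliot-Thélène--Sansuc--Soulé / Murre): the Abel--Jacobi map is injective on $\operatorname{CH}^2(X)_{\mathrm{tors}}$, hence on $A^2(X)_{\mathrm{tors}}$. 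Its image lies in $J_a^3$ once one checks that torsion algebraically trivial classes map into $J_a^3$, which holds because $A^2$ maps onto $J_a^3$ by definition. With these three inputs, Theorem~\ref{thm:torsioncriterion} gives $Q_{n-2}(X)\cong J_a^3(X)$ and $\Lambda_{n-2}(X)\cong H_1(J_a^3(X),\mathbb{Z})$.
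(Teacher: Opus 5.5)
Your proposal is correct and is essentially the paper's proof. Both verify surjectivity, regularity and torsion injectivity (Roitman for the Albanese map, the isomorphism $A_{n-1}(X)\cong\operatorname{Pic}^0(X)$ for divisors, Murre's Merkurjev--Suslin-based theorem in codimension two), apply Theorem~\ref{thm:torsioncriterion}, and read off the lattices from the period lattices of the targets; the only differences are cosmetic: you deduce $\operatorname{CR}_{n-1}(X)=R_{n-1}(X)$ directly from injectivity of $\rho$ rather than citing Friedlander, and you note that Proposition~\ref{prop:zero-closure} with $T=X$ already gives the zero-cycle closure.
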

\begin{proof}
Roitman's theorem supplies torsion injectivity for the Albanese homomorphism \cite{Roitman,Bloch}. For divisors the map $A_{n-1}(X)\cong\operatorname{Pic}^0(X)$ is the canonical isomorphism; its continuous cycle realization has kernel $R_{n-1}(X)$ \cite[Proposition 4.5 and Theorem 4.6]{Fri91}. In codimension two, Murre's torsion injectivity is recalled in \cite[Theorem 3.4(d)]{ACMV17}; continuity of the cycle realization is \cite[Proposition 3.5.12(iii), p.~105]{LF04}. These maps are surjective and regular, so Theorem~\ref{thm:torsioncriterion} applies. Their period lattices give the formulas.
\end{proof}
These are independently verified torsion theorems for the canonical maps, not a torsion-injectivity assertion for all regular homomorphisms. If $q(X)=h^{1,0}(X)=0$, then $\Lambda_0(X)=\Lambda_{n-1}(X)=0$ and $R_0(X)$ is dense in $G_0(X)$, with no restriction on the geometric genus.

\subsection{Isogenies and the necessity of the torsion hypothesis}
\begin{proposition}\label{prop:isogeny}
Suppose $\rho_0:A_p(X)\to A(\mathbb{C})$ satisfies Theorem~\ref{thm:torsioncriterion}, with cycle realization $\beta_0$. Let $f:A\to B$ be an isogeny, put $F=\ker f$, and set $\beta=f\beta_0$. Then
\begin{equation}\label{eq:isogeny}
 \ker\beta/\operatorname{CR}_p(X)\cong F,\qquad
 \pi_0\ker\beta\cong F,\qquad
 \operatorname{coker}\bigl(\beta_*:L_pH_{2p+1}(X)\to H_1(B,\mathbb{Z})\bigr)\cong F.
\end{equation}
The first is a topological group isomorphism with $F$ discrete. The inclusion of $\operatorname{CR}_p(X)$ as the identity component of $\ker\beta$ is an isomorphism on all based homotopy groups of positive degree.
\end{proposition}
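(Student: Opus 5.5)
The plan is to work entirely through the topological isomorphism $\bar\beta_0:Q_p(X)\xrightarrow{\sim}A(\mathbb{C})$ of Theorem~\ref{thm:torsioncriterion}. From that theorem, $\operatorname{CR}_p(X)=\ker\beta_0$. Since $\beta=f\beta_0$, we have $\ker\beta=\beta_0^{-1}(F)$. The map $\beta_0$ induces $\ker\beta/\operatorname{CR}_p(X)\cong F$ as abstract groups, because $\beta_0$ is surjective (it is the composite $G_p\to Q_p\cong A$). For the topological statement, restrict $q_p$ to the preimage $q_p^{-1}(\bar\beta_0^{-1}(F))=\ker\beta$. Since $q_p$ is a quotient map and $\ker\beta$ is saturated and closed, the restriction is again a quotient map onto the finite discrete set $\bar\beta_0^{-1}(F)\cong F$. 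More simply, $\ker\beta\to F$ is continuous to a finite discrete group with kernel $\operatorname{CR}_p(X)$. Hence $\operatorname{CR}_p(X)$ is open and closed in $\ker\beta$, and the quotient topology is discrete, giving the first isomorphism.

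Next, $\operatorname{CR}_p(X)$ is path connected by Theorem~\ref{thm:main}(ii). Its cosets in $\ker\beta$ are open and closed, so they are exactly the path components. This gives $\pi_0\ker\beta\cong F$ and shows that $\operatorname{CR}_p(X)$ is the identity component. For based homotopy groups in positive degree, every based sphere map and homotopy lands in the path component of $0$. So the inclusion $\operatorname{CR}_p(X)\hookrightarrow\ker\beta$ is an isomorphism on $\pi_m$ for all $m\geq1$.

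For the cokernel, use the exact sequence \eqref{eq:criterionexact}: $(\beta_0)_*:L_pH_{2p+1}(X)\to H_1(A,\mathbb{Z})$ is surjective. Then $\operatorname{im}\beta_*=f_*H_1(A,\mathbb{Z})$. It remains to show $\operatorname{coker}(f_*:H_1(A,\mathbb{Z})\to H_1(B,\mathbb{Z}))\cong F$. Lift $f$ to universal covers as the real linear isomorphism $\tilde f:H_1(A,\mathbb{R})\to H_1(B,\mathbb{R})$, with lattices $\Lambda_A=H_1(A,\mathbb{Z})$ and $\Lambda_B=H_1(B,\mathbb{Z})$. Then $F=\ker f=\tilde f^{-1}(\Lambda_B)/\Lambda_A\cong\Lambda_B/\tilde f(\Lambda_A)$, where the last isomorphism is induced by $\tilde f$. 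This is the covering-space identification of the fiber of the isogeny with $\pi_1(B)/f_*\pi_1(A)$.

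The only point needing care is the topological claim that $F$ carries the discrete topology in the first isomorphism, that is, that $\operatorname{CR}_p(X)$ is open in $\ker\beta$. This follows from continuity of $\beta_0$ and finiteness of $F$, whose points are closed in the Hausdorff group $A(\mathbb{C})$. So I expect no real obstacle; the bookkeeping is in ensuring the identifications are induced by the specified maps rather than being abstract isomorphisms.
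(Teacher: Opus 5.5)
Your proposal is correct and follows essentially the same route as the paper. You use Theorem~\ref{thm:torsioncriterion} to write $\ker\beta=\beta_0^{-1}(F)$, note that the cosets of the path-connected group $\operatorname{CR}_p(X)$ are open and closed (so they are the path components and the quotient is discrete), and read off the cokernel from surjectivity of $(\beta_0)_*$ together with the lattice sequence $0\to H_1(A,\mathbb{Z})\to H_1(B,\mathbb{Z})\to F\to0$; your universal-cover derivation of that sequence and the discreteness argument via continuity of $\beta_0$ into the finite Hausdorff set $F$ simply spell out steps the paper states briefly.
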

\begin{proof}
The criterion identifies $Q_p(X)$ with $A$ and $\ker\beta_0$ with $\operatorname{CR}_p(X)$. Therefore $\ker\beta=\beta_0^{-1}(F)$, whose fibers are translates of this path-connected kernel. They are open and closed, and give its connected and path components. The quotient topology on the finite group is discrete; this also follows from the local sections. The isogeny covering has exact lattice sequence
\[
 0\to H_1(A,\mathbb{Z})\xrightarrow{f_*}H_1(B,\mathbb{Z})\to F\to0.
\]
Since $(\beta_0)_*$ is surjective, the stated cokernel follows. A based sphere or homotopy lies in the identity component, proving the remaining assertion.
\end{proof}

\begin{example}\label{ex:isogeny}
Let $X=E\times\mathbb{P}^1$ for an elliptic curve $E$, and take $p=1$. The canonical divisor map $\rho_0:A_1(X)\cong\operatorname{Pic}^0(X)\cong E$ has realization $\beta_0$. For $N\geq2$, $\rho=[N]\rho_0$ is surjective and regular with continuous realization $\beta=[N]\beta_0$, but its torsion kernel is $E[N]\cong(\mathbb{Z}/N\mathbb{Z})^2$. For $0\neq t\in E[N]$, the divisor
\[
 z_t=[\{t\}\times\mathbb{P}^1]-[\{0\}\times\mathbb{P}^1]
\]
satisfies $\beta_0(z_t)=t$ and $\beta(z_t)=0$. Hence
\[
 \operatorname{CR}_1(X)=R_1(X)=\ker\beta_0\subsetneq\ker\beta,
 \quad\beta_*:\mathbb{Z}^2\to\mathbb{Z}^2,	 v\mapsto Nv,
 \quad\pi_0\ker\beta\cong(\mathbb{Z}/N\mathbb{Z})^2.
\]
The zero map $\rho':A_1(X)\to0$ is also surjective and regular with continuous realization $\beta'=0$. Nevertheless, the divisor computation in Section~\ref{subsec:divisors} gives
\[
 \pi_1\operatorname{CR}_1(X)=0,\qquad
 \ker\bigl(\beta'_*:L_1H_3(X)=\mathbb{Z}^2\to0\bigr)=\mathbb{Z}^2.
\]
Thus even the fundamental group kernel formula can fail without the torsion hypothesis. Its survival under a finite isogeny is a separate fact: $f_*$ is injective, so $\ker\beta_* = \ker(\beta_0)_*$ in Proposition~\ref{prop:isogeny}.
\end{example}

\subsection{The period homomorphism}\label{subsec:periods}
Assume $X$ is smooth and $r=n-p\geq1$. The usual intermediate Jacobian is
\[
 J^{2r-1}(X)=
 \frac{H^{2r-1}(X,\mathbb{C})}{F^rH^{2r-1}(X,\mathbb{C})+H^{2r-1}(X,\mathbb{Z})_{\mathrm{free}}}.
\]
Here the lattice is mapped into complex cohomology. The image of $A_p(X)$ under the classical Abel--Jacobi homomorphism is the abelian subvariety $J_a^{2r-1}(X)$. We use the same notation $\operatorname{AJ}$ for this Chow-class map and its cycle realization only when the domain is displayed. Poincar\'e duality identifies the lattice of the full Jacobian with $H_{2p+1}(X,\mathbb{Z})_{\mathrm{free}}$; the lattice of its algebraic image embeds in this group.

Throughout the rest of the paper we distinguish
\begin{equation}\label{eq:phinotation}
 \Phi_{p,k}:L_pH_k(X)\longrightarrow H_k(X,\mathbb{Z}),\qquad
 \overline{\Phi}_{p,2p+1}:L_pH_{2p+1}(X)\longrightarrow H_{2p+1}(X,\mathbb{Z})_{\mathrm{free}}.
\end{equation}
The second is the first followed by the torsion free quotient. Period compatibility \cite[Proposition 5.4, Theorem 5.7, and Corollary 5.9]{Walker} says
\begin{equation}\label{eq:periodcompat}
 \bigl(H_1(J_a^{2r-1}(X),\mathbb{Z})\hookrightarrow H_{2p+1}(X,\mathbb{Z})_{\mathrm{free}}\bigr)\circ\operatorname{AJ}_*
 =\overline{\Phi}_{p,2p+1}.
\end{equation}
A loop of $p$-cycles sweeps out its Lawson trace, whose periods measure the endpoint change of a lifted Abel--Jacobi loop. This equality does not by itself imply integral surjectivity of $\operatorname{AJ}_*$; the Walker comparison retains that obstruction.

\section{Identifications and explicit computations}\label{sec:examples}
\subsection{Zero-cycles}
For smooth $X$, Corollary~\ref{cor:classical} and Dold--Thom identify the first homotopy map with $H_1(X,\mathbb{Z})\twoheadrightarrow H_1(X,\mathbb{Z})_{\mathrm{free}}$. Hence
\begin{equation}\label{eq:zerocalculation}
 \pi_0\operatorname{CR}_0(X)=0,\qquad\pi_1\operatorname{CR}_0(X)=H_1(X,\mathbb{Z})_{\mathrm{tors}},\qquad
 \pi_m\operatorname{CR}_0(X)=H_m(X,\mathbb{Z})\quad(m\geq2).
\end{equation}
There is no condition on the geometric genus.

\subsection{Divisors}\label{subsec:divisors}
For smooth connected $X$ of dimension $n$, the divisor theorem \cite[Theorem 4.6]{Fri91} gives
\[
 \pi_1G_{n-1}(X)=H^1(X,\mathbb{Z}),\qquad \pi_2G_{n-1}(X)=\mathbb{Z},
 \qquad\pi_mG_{n-1}(X)=0\quad(m\geq3).
\]
The first map to the Picard lattice is an isomorphism. Therefore
\begin{equation}\label{eq:divisorcalculation}
 \operatorname{CR}_{n-1}(X)=R_{n-1}(X)\simeq K(\mathbb{Z},2),\qquad
 \pi_m\operatorname{CR}_{n-1}(X)=
 \begin{cases}\mathbb{Z},&m=2,\\0,&m=0,1\text{ or }m\geq3.
 \end{cases}
\end{equation}
In particular this completely computes the nontrivial positive-dimensional case on a smooth surface, regardless of its geometric genus.

\begin{example}\label{ex:k3}
Let $S\subset\mathbb{P}^3$ be a smooth quartic surface. It is a K3 surface with $p_g=1$ and $\pi_1(S)=0$. If $h$ is the hyperplane class, $c_2(TS)=6h^2$ and $\int_Sh^2=4$, so the Euler characteristic is $24$. Poincar\'e duality and the universal coefficient theorem then give
\[
 H_2(S,\mathbb{Z})=\mathbb{Z}^{22},\quad H_4(S,\mathbb{Z})=\mathbb{Z},\quad H_1(S,\mathbb{Z})=H_3(S,\mathbb{Z})=0.
\]
In particular $H_2$ has no torsion, since it is controlled by $H_1$ through duality and universal coefficients. Thus
\begin{equation}\label{eq:k3}
 \operatorname{CR}_0(S)\simeq K(\mathbb{Z}^{22},2)\times K(\mathbb{Z},4),\qquad
 \operatorname{CR}_1(S)=R_1(S)\simeq K(\mathbb{Z},2).
\end{equation}
CW homotopy type upgrades the weak equivalences with these finite products to homotopy equivalences. The first formula concerns the closure, not the unclosed subgroup $R_0(S)$. Since $\operatorname{Alb}(S)=0$, Proposition~\ref{prop:zero-closure} also gives $\operatorname{CR}_0(S)=G_0(S)$. Mumford's theorem \cite{Mumford} gives $\operatorname{CH}_0(S)^0\neq0$, where the superscript denotes the kernel of degree. Hence
\[
 R_0(S)\subsetneq\operatorname{CR}_0(S),\qquad
 D_0(S)\cong\operatorname{CH}_0(S)^0\neq0,
 \qquad Q_0(S)=0.
\]
Thus even a zero Hausdorff quotient does not imply that rational equivalence is closed. This is an application of the classical zero-cycle theorem and the closure calculation, not a new construction of a K3 surface.
\end{example}

\subsection{Codimension two}
For smooth $X$, $n\geq3$, and $p=n-2$, the independent torsion theorem used in Corollary~\ref{cor:classical} gives
\begin{equation}\label{eq:codim2}
 \operatorname{CR}_{n-2}(X)=\ker\bigl(G_{n-2}(X)\xrightarrow{\operatorname{AJ}}J_a^3(X)\bigr).
\end{equation}
Using \eqref{eq:periodcompat}, we obtain
\begin{equation}\label{eq:codim2homotopy}
 \begin{split}
 \pi_0\operatorname{CR}_{n-2}(X)&=0,\\
 \pi_1\operatorname{CR}_{n-2}(X)&=\ker\bigl(\overline{\Phi}_{n-2,2n-3}:L_{n-2}H_{2n-3}(X)\to H_{2n-3}(X,\mathbb{Z})_{\mathrm{free}}\bigr),\\
 \pi_m\operatorname{CR}_{n-2}(X)&=L_{n-2}H_{2n-4+m}(X)\quad(m\geq2).
 \end{split}
\end{equation}
The codomain of the period map may be restricted to $H_1(J_a^3(X),\mathbb{Z})$, onto which it is surjective.

\begin{example}\label{ex:threefold}
For a smooth projective threefold and $p=1$,
\[
 \operatorname{CR}_1(X)=\ker\operatorname{AJ},\qquad \pi_1\operatorname{CR}_1(X)=\ker\overline{\Phi}_{1,3},\qquad
 \pi_m\operatorname{CR}_1(X)=L_1H_{m+2}(X)\ (m\geq2).
\]
Computing $\pi_2\operatorname{CR}_1(X)$ in singular-homology terms is precisely the additional comparison problem for $L_1H_4(X)$. The closure theorem neither proves that comparison nor imposes a universal vanishing range on the remaining Lawson groups.
\end{example}

\subsection{A surface times the projective line}
\begin{theorem}\label{thm:surfaceproduct}
Let $S$ be a smooth connected projective surface, $Y=S\times\mathbb{P}^1$, $\mathrm{pr}:Y\to S$ the projection, and $i_\infty:S\hookrightarrow Y$ the inclusion at infinity. The map
\begin{equation}\label{eq:surfaceproductmap}
 \Psi_{\operatorname{CR}}:\operatorname{CR}_0(S)\times\operatorname{CR}_1(S)\longrightarrow\operatorname{CR}_1(Y),\qquad
 (z,D)\longmapsto\mathrm{pr}^*z+i_{\infty*}D
\end{equation}
is a homotopy equivalence. In particular,
\begin{equation}\label{eq:surfaceproducttype}
 \operatorname{CR}_1(S\times\mathbb{P}^1)\simeq\operatorname{CR}_0(S)\times K(\mathbb{Z},2),
\end{equation}
and
\begin{equation}\label{eq:surfaceproducthomotopy}
 \pi_m\operatorname{CR}_1(S\times\mathbb{P}^1)=
 \begin{cases}
 H_1(S,\mathbb{Z})_{\mathrm{tors}},&m=1,\\
 H_2(S,\mathbb{Z})\oplus\mathbb{Z},&m=2,\\
 H_3(S,\mathbb{Z}),&m=3,\\
 \mathbb{Z},&m=4,\\
 0,&m=0\text{ or }m\geq5.
 \end{cases}
\end{equation}
\end{theorem}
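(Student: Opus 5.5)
The plan is to compare the two principal bundles of Theorem~\ref{thm:main}(ii) over their Hausdorff quotients and apply the five lemma. First, the map \eqref{eq:surfaceproductmap} is well defined and continuous. Flat pullback along $\mathrm{pr}$ (relative dimension $1$) and proper pushforward along $i_\infty$ are continuous on cycle spaces and carry closures into closures by Proposition~\ref{prop:functoriality}. Addition is continuous. Hence the same formula defines a continuous homomorphism
\[
\Psi_G:G_0(S)\times G_1(S)\to G_1(Y)
\]
restricting to $\Psi_{\operatorname{CR}}$ and descending to a continuous homomorphism
\[
\bar\Psi:Q_0(S)\times Q_1(S)\to Q_1(Y).
\]
Naturality of \eqref{eq:exact} and \eqref{eq:higher} gives a map of homotopy exact sequences of the fibrations $\operatorname{CR}\to G\to Q$ on the two sides; on the source we use the product fibration.

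Second, $\Psi_G$ is an isomorphism on $\pi_m$ for $m\geq1$. This is the projective bundle (trivial $\mathbb{P}^1$-bundle) theorem for Lawson homology:
\[
L_1H_{k}(S\times\mathbb{P}^1)\cong L_0H_{k-2}(S)\oplus L_1H_k(S)
\]
via $\mathrm{pr}^*$ and $i_{\infty*}$, due to Friedlander--Gabber, equivalently Lawson's complex suspension combined with the splitting of $S\times\mathbb{P}^1$ as $S\times\mathbb{A}^1$ plus $S\times\{\infty\}$. I would cite it, checking only that the maps used there are exactly $\mathrm{pr}^*$ and $i_{\infty*}$ in the positive homotopy degrees; the $\pi_0$ discrepancy is irrelevant since we work in the algebraically trivial components.

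Third, $\bar\Psi$ is an isomorphism of tori; I expect this to be the main point. By Corollary~\ref{cor:classical}:
\begin{itemize}
\item $Q_0(S)\cong\operatorname{Alb}(S)$ and $Q_1(S)\cong\operatorname{Pic}^0(S)$;
\item since $Y$ is a threefold and $p=1=n-2$, $Q_1(Y)\cong J_a^3(Y)$ via $\operatorname{AJ}$.
\end{itemize}
The Künneth decomposition $H^3(Y)=H^3(S)\otimes H^0(\mathbb{P}^1)\oplus H^1(S)\otimes H^2(\mathbb{P}^1)$ respects Hodge filtrations and integral lattices. It yields
\[
J^3(Y)\cong J^3(S)\times J^1(S)=\operatorname{Alb}(S)\times\operatorname{Pic}^0(S),
\]
and functoriality of Abel--Jacobi maps under $\mathrm{pr}^*$ and $i_{\infty*}$ identifies $\operatorname{AJ}\circ\Psi_G$ with the product of the Albanese and Picard maps. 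Hence $\bar\Psi$ is an isomorphism onto $J^3(Y)$, and in particular $J_a^3(Y)=J^3(Y)$.

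If this Jacobian bookkeeping proves delicate, I would fall back on lattices. Because $\kappa$ is surjective and $\pi_1\Psi_G$ is an isomorphism, $\bar\Psi_*$ on $\Lambda$ is surjective. Injectivity then follows from a rank count, using $\operatorname{rank}\Lambda_1(Y)=b_3(Y)=b_1+b_3(S)$ from the same Künneth computation. A surjection of lattices of equal rank with free target is an isomorphism, and then $\bar\Psi$ is an isomorphism of compact tori.

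With $\bar\Psi$ an isomorphism on all $\pi_m$ (trivially for $m\geq2$), the five lemma applied to the ladder of long exact sequences shows that $\Psi_{\operatorname{CR}}$ is an isomorphism on all $\pi_m$, $m\geq1$. Both sides are path connected, so $\Psi_{\operatorname{CR}}$ is a weak equivalence. All closures have CW homotopy type by Theorem~\ref{thm:main}(ii), so Whitehead's theorem upgrades this to a homotopy equivalence. Then \eqref{eq:divisorcalculation} gives $\operatorname{CR}_1(S)\simeq K(\mathbb{Z},2)$, proving \eqref{eq:surfaceproducttype}.

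Finally, \eqref{eq:surfaceproducthomotopy} follows by adding \eqref{eq:zerocalculation} to the homotopy of $K(\mathbb{Z},2)$. Here $\pi_1=H_1(S,\mathbb{Z})_{\mathrm{tors}}$, $\pi_2=H_2(S,\mathbb{Z})\oplus\mathbb{Z}$, $\pi_3=H_3(S,\mathbb{Z})$, $\pi_4=H_4(S,\mathbb{Z})=\mathbb{Z}$, and $\pi_m=0$ for $m\geq5$ because $S$ has real dimension $4$.
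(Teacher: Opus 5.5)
Your argument is correct, but for $\pi_1$ it takes a different route from the paper.

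The paper does not compare the Hausdorff quotients inside the proof. After the Friedlander--Gabber equivalence and naturality of \eqref{eq:higher} for $m\geq2$, it computes $\pi_1\operatorname{CR}_1(Y)=\ker\overline{\Phi}^Y_{1,3}$ from \eqref{eq:codim2homotopy}. To do so it checks that, under $L_1H_3(Y)\cong H_1(S,\mathbb{Z})\oplus H^1(S,\mathbb{Z})$, the cycle map is $(a,b)\mapsto(a,\operatorname{PD}(b))$. The kernel $H_1(S,\mathbb{Z})_{\mathrm{tors}}\oplus0$ then matches $\pi_1\operatorname{CR}_0(S)\oplus\pi_1\operatorname{CR}_1(S)$. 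The isomorphism $Q_1(S\times\mathbb{P}^1)\cong\operatorname{Alb}(S)\times\operatorname{Pic}^0(S)$ is derived only afterwards, as Corollary~\ref{cor:surfacequotient}.

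You reverse this order. You first show that $\bar\Psi$ is an isomorphism of tori, using three inputs: the canonical identifications of Corollary~\ref{cor:classical}, the integral K\"unneth splitting $J^3(Y)\cong J^3(S)\times J^1(S)$, and compatibility of Abel--Jacobi maps with $\mathrm{pr}^*$ and $i_{\infty*}$. The five lemma then treats all degrees uniformly.

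What each route buys:
\begin{itemize}
\item Your route replaces the Lawson-level description of $\Phi^Y_{1,3}$ and Walker's period compatibility \eqref{eq:periodcompat} by classical functoriality of intermediate Jacobians. It also yields Corollary~\ref{cor:surfacequotient} as a byproduct.
\item The paper's route exhibits $\pi_1$ concretely as the kernel of the integral cycle map, consistent with the general codimension-two formula.
\end{itemize}
Both rest on the same torsion input, namely Murre's theorem through $Q_1(Y)\cong J_a^3(Y)$.

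One caveat on your lattice fallback: the equality $\operatorname{rank}\Lambda_1(Y)=b_3(Y)$ is not independent of the Jacobian computation. It already uses $J_a^3(Y)=J^3(Y)$, or an equivalent lower bound on the rank, e.g.\ from the Walker surjection of Theorem~\ref{thm:walkerquotient} onto a coniveau lattice of full rank.
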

\begin{proof}
The projective-line cycle-space formula makes the same map
\[
 \Psi:Z_0(S)\times Z_1(S)\longrightarrow Z_1(S\times\mathbb{P}^1)
\]
a homotopy equivalence by the cycle-space projective bundle theorem \cite[Proposition 2.5]{FG}; see also the projection--restriction diagram in \cite[Exercises 3.3.2(5), pp.~91--92]{LF04}. In the displayed order of factors, flat pullback applies to the zero-cycle and the section pushforward applies to the divisor, as in \eqref{eq:surfaceproductmap}. The group decomposition is recorded separately in \cite[Remark 3.3.15(1), p.~92]{LF04}. Proposition~\ref{prop:functoriality} shows that it restricts to the indicated closures. For $m\geq1$ its Lawson isomorphism is
\begin{equation}\label{eq:productlawson}
 L_1H_{m+2}(Y)\cong L_0H_m(S)\oplus L_1H_{m+2}(S).
\end{equation}
For $m\geq2$, naturality of \eqref{eq:higher} identifies $(\Psi_{\operatorname{CR}})_*$ with this isomorphism. Dold--Thom and the divisor calculation give the groups in \eqref{eq:surfaceproducthomotopy} in those degrees.

The first homotopy group requires its own verification. In degree three,
\[
 L_1H_3(Y)\cong H_1(S,\mathbb{Z})\oplus H^1(S,\mathbb{Z}),\qquad
 H_3(Y,\mathbb{Z})\cong\bigl(H_1(S,\mathbb{Z})\otimes H_2(\mathbb{P}^1,\mathbb{Z})\bigr)\oplus H_3(S,\mathbb{Z}).
\]
Under these decompositions, $\Phi^Y_{1,3}(a,b)=(a,\operatorname{PD}(b))$: the first term is the trace of a zero-cycle loop times $[\mathbb{P}^1]$, and the second is the divisor period identity. Here $\operatorname{PD}:H^1(S,\mathbb{Z})\xrightarrow{\ \sim\ } H_3(S,\mathbb{Z})$ and this group is torsion free. By \eqref{eq:codim2homotopy},
\[
 \pi_1\operatorname{CR}_1(Y)=\ker\overline{\Phi}^Y_{1,3}=H_1(S,\mathbb{Z})_{\mathrm{tors}}\oplus0.
\]
The zero-cycle fibration identifies $\pi_1\operatorname{CR}_0(S)$ with this torsion subgroup, and $\pi_1\operatorname{CR}_1(S)=0$. Thus $\Psi_{\operatorname{CR}}$ is also an isomorphism on $\pi_1$. All spaces are path connected and of CW homotopy type, so Whitehead's theorem proves the homotopy equivalence. Formulas \eqref{eq:zerocalculation} and \eqref{eq:divisorcalculation} give the stated type and groups.
\end{proof}

\begin{corollary}\label{cor:surfacequotient}
There are topological group and lattice isomorphisms
\begin{equation}\label{eq:surfacequotient}
 Q_1(S\times\mathbb{P}^1)\cong\operatorname{Alb}(S)\times\operatorname{Pic}^0(S),\qquad
 \Lambda_1(S\times\mathbb{P}^1)\cong H_1(S,\mathbb{Z})_{\mathrm{free}}\oplus H^1(S,\mathbb{Z}).
\end{equation}
In particular, $\operatorname{rank}\Lambda_1=2b_1(S)=4q(S)$.
\end{corollary}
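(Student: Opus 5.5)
The plan is to transport the product decomposition of Theorem~\ref{thm:surfaceproduct} to the algebraically trivial parts and then to the Hausdorff quotients. The map $\Psi:Z_0(S)\times Z_1(S)\to Z_1(Y)$ is a continuous group homomorphism. I expect it to be a group isomorphism, with inverse given by restriction to $S\times\{\infty\}$ and projection, as in the group decomposition of \cite[Remark 3.3.15(1), p.~92]{LF04}. By Proposition~\ref{prop:functoriality}, $\Psi$ preserves algebraic and rational equivalence and their closures. The inverse operations are Gysin-type: projection to the divisor part, and the zero-cycle part via $\mathrm{pr}_*$ after subtracting. I will check that they also preserve algebraic equivalence, and rational equivalence modulo the decomposition. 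This should give $\Psi(G_0(S)\times G_1(S))=G_1(Y)$ and $\Psi(R_0(S)\times R_1(S))=R_1(Y)$. The second identity is the classical projective bundle formula for Chow groups, $\operatorname{CH}_1(S\times\mathbb{P}^1)\cong\operatorname{CH}_0(S)\oplus\operatorname{CH}_1(S)$, applied at the cycle level.

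Next I take closures. If $\Psi$ restricted to $G_0\times G_1\to G_1(Y)$ is a homeomorphism, closures correspond, so
\[
 \Psi\bigl(\operatorname{CR}_0(S)\times\operatorname{CR}_1(S)\bigr)=\operatorname{CR}_1(Y).
\]
Then $\Psi$ descends to a topological group isomorphism $Q_0(S)\times Q_1(S)\cong Q_1(Y)$. Continuity of the inverse is the main technical point. To avoid it, I use a cleaner route. $\Psi$ induces a continuous bijective homomorphism $\bar\Psi:Q_0(S)\times Q_1(S)\to Q_1(Y)$. Surjectivity is immediate from the cycle decomposition. Injectivity requires that $\Psi(a,b)\in\operatorname{CR}_1(Y)$ force $a\in\operatorname{CR}_0(S)$ and $b\in\operatorname{CR}_1(S)$. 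Continuous Gysin restriction $i_\infty^*$ and pushforward $\mathrm{pr}_*$ (after intersecting with a hyperplane section if needed) give continuous homomorphisms $G_1(Y)\to G_1(S)$ and $G_1(Y)\to G_0(S)$. These carry $R_1(Y)$ into the rational subgroups, hence carry closures into closures, and they recover $b$ and $a$. Whether $i_\infty^*$ is continuous on the whole cycle group is the step I expect to be the main obstacle. If it fails, I would instead use the regular homomorphisms $\operatorname{Alb}\circ$ (zero-cycle part) and the Picard map, and appeal to Lemma~\ref{lem:regularcontinuity} for continuity.

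With the bijection in hand, Corollary~\ref{cor:classical} gives $Q_0(S)\cong\operatorname{Alb}(S)$ and $Q_1(S)\cong\operatorname{Pic}^0(S)$, so $Q_0(S)\times Q_1(S)$ is a compact torus. By Proposition~\ref{prop:exhaustion}, the image tori $\bar\Psi(\text{source})\cap\mathcal{A}_d$ stabilize, and one $\mathcal{A}_d$ equals the whole of $Q_1(Y)$. This holds because the continuous image of a compact torus is compact and covers $Q_1(Y)$, hence lies in the exhaustion, and each compact subset sits in some $\mathcal{A}_d$ by the $k_\omega$ property. The map is then a continuous bijection from a compact space to a Hausdorff space, hence a homeomorphism and a topological group isomorphism. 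Applying $\pi_1$ gives the lattice isomorphism, using $\Lambda_0(S)\cong H_1(S,\mathbb{Z})_{\mathrm{free}}$ and $\Lambda_1(S)\cong H^1(S,\mathbb{Z})$ from Corollary~\ref{cor:classical}.

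Finally, the rank statement follows from $\operatorname{rank}H_1(S)_{\mathrm{free}}=\operatorname{rank}H^1(S)=b_1(S)$ and Hodge symmetry $b_1=2q(S)$.
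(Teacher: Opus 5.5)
Your overall strategy (show that $\bar\Psi$ is a continuous bijection, then use compact-to-Hausdorff) works and differs from the paper's, but two steps are wrong as written.

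First, $\Psi:Z_0(S)\times Z_1(S)\to Z_1(Y)$ is injective but not a group isomorphism. Its image is spanned by the fibers $\{s\}\times\mathbb{P}^1$ and the curves $C\times\{\infty\}$. So for a curve $C\subset S$, the rationally trivial cycle $C\times\{0\}-C\times\{\infty\}$ is not in the image. Hence $\Psi(G_0(S)\times G_1(S))=G_1(Y)$ and $\Psi(R_0(S)\times R_1(S))=R_1(Y)$ are false; the paper uses $\Psi$ only as a homotopy equivalence. Surjectivity of $\bar\Psi$ is therefore not ``immediate from the cycle decomposition.'' It follows instead from the Chow-level formula. For $w\in G_1(Y)$ one has $[w]=\mathrm{pr}^*i_\infty^*[w]+i_{\infty*}\mathrm{pr}_*[w]$, with both components algebraically trivial. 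Hence $w-\Psi(a,b)\in R_1(Y)$ for suitable $a\in G_0(S)$, $b\in G_1(S)$.

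Second, $i_\infty^*$, and intersection with a hyperplane section, are not defined on cycle groups, so your primary injectivity device does not exist. You do not need it.
\begin{enumerate}
\item For $b$: $\mathrm{pr}_*$ is continuous and $\mathrm{pr}_*\Psi(a,b)=b$, so Proposition~\ref{prop:functoriality} gives $b\in\operatorname{CR}_1(S)$. The Picard map is unnecessary.
\item For $a$: your fallback is the right one. The composite of the Albanese map with the Chow-level Gysin map $i_\infty^*:A_1(Y)\to A_0(S)$ is regular, so Lemma~\ref{lem:regularcontinuity} makes its cycle realization continuous. That realization kills $\operatorname{CR}_1(Y)$ and sends $\Psi(a,b)$ to $\operatorname{alb}_S(a)$, since $S\times\{\infty\}$ has trivial normal bundle. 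Proposition~\ref{prop:zero-closure} then gives $a\in\operatorname{CR}_0(S)$.
\end{enumerate}
Once $\bar\Psi$ is a continuous bijection from a compact torus onto a Hausdorff group, it is a homeomorphism. The discussion of the tori $\mathcal{A}_d$ is then superfluous.

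For comparison, the paper never checks injectivity or surjectivity of $\bar\Psi$. It descends $\Psi$ to a continuous homomorphism $\operatorname{Alb}(S)\times\operatorname{Pic}^0(S)\to Q_1(Y)$. It identifies the lattice map as an isomorphism using naturality of \eqref{eq:exact} and the $\pi_1$ computations in the proof of Theorem~\ref{thm:surfaceproduct}: the Lawson decomposition \eqref{eq:productlawson} and the fact that $\Psi_{\operatorname{CR}}$ is an isomorphism on $\pi_1$. It then deduces that $Q_1(Y)$ is a finite dimensional torus. It concludes because a torus homomorphism inducing an integral lattice isomorphism is an isomorphism.

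That route costs the cycle-space projective bundle theorem and, through \eqref{eq:codim2homotopy}, Murre's theorem on $Y$, but it yields the integral lattice map directly. Your repaired route needs only the Chow-level projective bundle formula, Roitman via Proposition~\ref{prop:zero-closure}, the divisor case of Corollary~\ref{cor:classical}, and Lemma~\ref{lem:regularcontinuity}. It obtains the lattice isomorphism as a consequence of the topological one. It is essentially the argument behind Corollary~\ref{cor:projectivestabilization} (Theorem~\ref{thm:torsioncriterion} applied to the projective-bundle decomposition) with $T=S$ and $a=1$, carried out by hand.
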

\begin{proof}
The maps in \eqref{eq:surfaceproductmap} descend to a continuous homomorphism from $\operatorname{Alb}(S)\times\operatorname{Pic}^0(S)$ to $Q_1(S\times\mathbb{P}^1)$. The proof of Theorem~\ref{thm:surfaceproduct} and naturality of \eqref{eq:exact} identify its lattice map with the displayed isomorphism. The target lattice has finite rank, so its quotient is a finite dimensional torus. A homomorphism of such tori inducing an integral lattice isomorphism is an isomorphism. Hodge decomposition gives the rank formula.
\end{proof}

\subsection{Complex cellular varieties}
\begin{definition}\label{def:cellular}
A complex variety $Y$ is complex cellular if it has a finite filtration by closed subvarieties
\[
 \varnothing=Y_{-1}\subset Y_0\subset\cdots\subset Y_N=Y,
 \qquad Y_i\setminus Y_{i-1}\cong\mathbb{A}^{d_i}_{\mathbb{C}},
\]
where the isomorphisms are algebraic and $d_i\geq0$. The number of strata is not required to be $\dim Y+1$.
\end{definition}

One may equivalently allow a finite disjoint union of affine spaces at each step. Each component is open and closed in its stratum; adjoining the components successively refines the closed filtration to one affine space at a time.

\begin{remark}\label{rem:cellular}
The definition requires algebraic affine strata, not merely a CW structure or strata homeomorphic to affine spaces. Relative cellular spaces allow vector bundles over smooth projective bases \cite[Definition 6.1]{Karpenko}. For example, $E\times\mathbb{P}^1$ has relative cells $E$ and $E\times\mathbb{A}^1$ but is not complex cellular, since its first homology is $\mathbb{Z}^2$. Its divisor quotient is $E$, so the vanishing conclusion below does not extend to relative cellular spaces in general.
\end{remark}

For a smooth projective complex cellular $X$, the integral comparisons are
\begin{equation}\label{eq:cellularcomparison}
 \operatorname{CH}_p(X)\xrightarrow{\ \sim\ } H_{2p}(X,\mathbb{Z}),\qquad
 \Phi_{p,k}:L_pH_k(X)\xrightarrow{\ \sim\ } H_k(X,\mathbb{Z})\quad(k\geq2p).
\end{equation}
The Chow statement, freeness, and vanishing of odd homology are \cite[Examples 1.9.1 and 19.1.11(b)]{Fulton}. For the Lawson calculation, use \cite[Section 4(2)]{HuLi} with one configuration point, so $X[1]=X$; its proof uses localization from \cite{LF92}.

\begin{proposition}\label{prop:cellular}
If $X$ is smooth projective and complex cellular, then
\[
 G_p(X)=R_p(X)=\operatorname{CR}_p(X),\qquad Q_p(X)=0.
\]
For $0\leq p<n$, with $b_i=\operatorname{rank} H_i(X,\mathbb{Z})$,
\begin{equation}\label{eq:cellularhomotopy}
 \pi_m\operatorname{CR}_p(X)=
 \begin{cases}
 \mathbb{Z}^{b_{2p+2j}},&m=2j,	1\leq j\leq n-p,\\
 0,&m\text{ odd, }m=0,\text{ or }m>2(n-p).
 \end{cases}
\end{equation}
Its homotopy type is $\prod_{j=1}^{n-p}K(\mathbb{Z}^{b_{2p+2j}},2j)$.
\end{proposition}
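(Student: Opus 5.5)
The plan is to use the integral comparisons \eqref{eq:cellularcomparison} to kill algebraic equivalence, and then to read off the homotopy of the closure from Theorem~\ref{thm:main}.

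\emph{Step 1: the three subgroups agree.} By \eqref{eq:cellularcomparison}, the cycle class map $\operatorname{CH}_p(X)\to H_{2p}(X,\mathbb{Z})$ is injective. An algebraically trivial cycle is homologically trivial, so its Chow class vanishes, and hence $G_p(X)\subset R_p(X)$. Since rationally trivial cycles are algebraically trivial, $G_p(X)=R_p(X)$. The group $G_p(X)$ is closed by Lemma~\ref{lem:components}, and $R_p(X)\subset\operatorname{CR}_p(X)\subset G_p(X)$, so all three coincide. Consequently $Q_p(X)=0$ and $\Lambda_p(X)=0$.

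\emph{Step 2: homotopy groups.} Now $\operatorname{CR}_p(X)=G_p(X)$ is the path component of zero in $Z_p(X)$. For $m\geq1$ this gives $\pi_m\operatorname{CR}_p(X)=L_pH_{2p+m}(X)$; for $m\geq2$ one can equally invoke \eqref{eq:higher}. For $m=1$, \eqref{eq:exact} with $\Lambda_p=0$ gives the same conclusion. By \eqref{eq:cellularcomparison}, $L_pH_{2p+m}(X)\cong H_{2p+m}(X,\mathbb{Z})$, and for a cellular variety these groups are free and vanish in odd degree (Fulton, as cited). Since $H_{2p+m}(X,\mathbb{Z})=0$ whenever $2p+m>2n$, the groups vanish for $m>2(n-p)$. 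This yields \eqref{eq:cellularhomotopy}, and $\pi_0=0$ by path connectedness.

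\emph{Step 3: homotopy type.} Corollary~\ref{cor:moore} applies with $\ker\kappa_p=\pi_1=0$. It gives a weak equivalence between $\operatorname{CR}_p(X)$ and the finite product $\prod_{j=1}^{n-p}K(\mathbb{Z}^{b_{2p+2j}},2j)$. Theorem~\ref{thm:main}(ii) says the closure has CW homotopy type, so Whitehead's theorem upgrades this weak equivalence to a homotopy equivalence.

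The only point that needs real care is Step 1's use of injectivity of the \emph{integral} cycle map on Chow groups. This injectivity is exactly what the cited comparison supplies, and everything else is formal.
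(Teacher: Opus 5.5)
Your proposal is correct and follows the paper's proof essentially step for step. Injectivity of the integral cycle map in \eqref{eq:cellularcomparison} gives $G_p(X)=R_p(X)$; closedness of $G_p(X)$ then forces equality with the closure and $Q_p(X)=0$; the Lawson comparison computes the homotopy groups; and Corollary~\ref{cor:moore} together with CW homotopy type yields the finite product, with your explicit treatment of $m=1$ (via $\operatorname{CR}_p(X)=G_p(X)$ being the zero path component) being a detail the paper leaves implicit.
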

\begin{proof}
Algebraic equivalence preserves integral cycle classes. The Chow isomorphism in \eqref{eq:cellularcomparison} therefore gives $G_p=R_p$. This component is closed, so it equals its closure and $Q_p=0$. The Lawson comparison gives \eqref{eq:cellularhomotopy}, and Corollary~\ref{cor:moore} together with CW homotopy type gives the finite product description.
\end{proof}

\begin{example}\label{ex:projectivespace}
For $X=\mathbb{P}^n$ and $0\leq p<n$, the only nonzero homotopy groups of $\operatorname{CR}_p(\mathbb{P}^n)$ are $\pi_{2j}=\mathbb{Z}$ for $1\leq j\leq n-p$. Its homotopy type is $\prod_{j=1}^{n-p}K(\mathbb{Z},2j)$.
\end{example}

\begin{example}\label{ex:grassmannian}
For the four-dimensional Grassmannian $X=\operatorname{Gr}(2,4)$ of two-planes in $\mathbb{C}^4$, the Schubert cells give $b_4=2$, $b_6=b_8=1$. At $p=1$ the only nonzero homotopy groups of the closure are
\[
 \pi_2\operatorname{CR}_1(X)=\mathbb{Z}^2,\qquad \pi_4\operatorname{CR}_1(X)=\mathbb{Z},\qquad \pi_6\operatorname{CR}_1(X)=\mathbb{Z}.
\]
The homotopy type is $K(\mathbb{Z}^2,2)\times K(\mathbb{Z},4)\times K(\mathbb{Z},6)$.
\end{example}

\subsection{Finite ranks and a varying-variety direct limit}
\begin{example}\label{ex:genus}
For $g\geq1$, let $B_g$ be the smooth projective model of $y^2=x^{2g+1}-1$, and put $X_g=B_g\times\mathbb{P}^1$. The double cover has $2g+2$ branch points, including infinity, so $B_g$ has genus $g$. One-cycles on $X_g$ are divisors and
\[
 Q_1(X_g)=\operatorname{Pic}^0(X_g)\cong J(B_g),\qquad\Lambda_1(X_g)=\mathbb{Z}^{2g}.
\]
Thus arbitrarily large finite ranks occur while the dimension of the variety remains two.
\end{example}

\begin{example}\label{ex:limit}
Fix an elliptic curve $E$ and, for every integer $a\geq1$, let $X_a=E^a\times\mathbb{P}^1$, $p=a$. Then $Q_a(X_a)=\operatorname{Pic}^0(X_a)\cong E^a$. Flat pullback along $X_{a+1}\to X_a$ is the standard inclusion with zero last coordinate. The final-topology direct limit is
\[
 Q_\infty\cong\mathop{\bigoplus}\nolimits_{j\geq1}E(\mathbb{C}),\qquad
 \pi_1Q_\infty=\bigoplus_{j\geq1}\mathbb{Z}^2,\qquad\pi_mQ_\infty=0\ (m\geq2).
\]
Both the variety and the cycle dimension vary. This does not realize an infinite rank $\Lambda_p(X)$ for one fixed $X,p$.
\end{example}

\subsection{An irreducible singular surface}
\begin{example}\label{ex:nodal}
Let $C\subset\mathbb{P}^2$ be the irreducible nodal cubic $y^2z=x^2(x+z)$, and let $X=C\times\mathbb{P}^1$. With ordinary Chow groups,
\[
 Q_0(C)=Q_1(X)=0,\qquad\operatorname{CR}_0(C)=G_0(C),\qquad\operatorname{CR}_1(X)=G_1(X).
\]
For either $F=\operatorname{CR}_0(C)$ or $F=\operatorname{CR}_1(X)$,
\[
 \pi_1F=\mathbb{Z},	\pi_2F=\mathbb{Z},	\pi_mF=0\ (m\geq3),	
 F\simeq K(\mathbb{Z},1)\times K(\mathbb{Z},2).
\]
Thus the smooth zero-cycle and divisor formulas do not extend to these singular varieties.
\end{example}
\begin{proof}
The normalization $\mathbb{P}^1\to C$ identifies two distinct points; hence $C\simeq S^2\vee S^1$. Every degree zero cycle on $C$ lifts to a degree zero cycle on $\mathbb{P}^1$, which is rationally trivial. Proper pushforward therefore gives $\operatorname{CH}_0(C)=\mathbb{Z}$ via degree, so $R_0(C)=G_0(C)$ and this group is closed. Dold--Thom computes its homotopy groups.

The integral projective bundle formula \cite[Theorem 3.3]{Fulton} gives
\[
 \operatorname{CH}_1(C\times\mathbb{P}^1)\cong\operatorname{CH}_0(C)\oplus\operatorname{CH}_1(C)\cong\mathbb{Z}^2.
\]
Its cycle map to $H_2(X,\mathbb{Z})=\mathbb{Z}^2$ is an isomorphism, with generators $[c\times\mathbb{P}^1]$ and $[C\times\{t\}]$. Algebraic equivalence preserves this class, so $G_1(X)=R_1(X)=\operatorname{CR}_1(X)$. The cycle-space projective bundle formula for singular bases \cite[Proposition 2.5]{FG} and the discreteness of $Z_1(C)=\mathbb{Z}[C]$ give
\[
 \pi_m\operatorname{CR}_1(X)\cong\pi_mZ_0(C)\oplus\pi_mZ_1(C)=H_m(C,\mathbb{Z})\quad(m\geq1).
\]
Theorem~\ref{thm:main} supplies path connectedness and CW homotopy type. Moore's splitting gives the displayed finite Eilenberg--Mac Lane products.
\end{proof}

\subsection{Top dimensional and disconnected varieties}\label{subsec:top}
For any reduced projective variety $X$ of dimension $n$, $Z_n(X)$ is the discrete free abelian group on its $n$-dimensional irreducible components. There are no nonzero algebraically or rationally trivial top-dimensional cycles; thus
\[
 G_n(X)=R_n(X)=\operatorname{CR}_n(X)=0,	 Q_n(X)=0,	\Lambda_n(X)=0.
\]
For $p>n$ the entire cycle group is zero. A zero-dimensional reduced projective variety is a finite set, with the same conclusion. If $X$ has finitely many connected components $X_i$, then for each $p$ its cycle groups, closures, and quotients are the finite products of those for $X_i$. The compact degree filtrations give the product topology. Consequently the structural results extend to disconnected $X$ componentwise.

\subsection{Projective stabilization}
\begin{corollary}\label{cor:projectivestabilization}
If $T$ is smooth connected projective with $\dim T\leq3$, then for every $a\geq0$ and $0\leq p\leq\dim T+a$,
\begin{equation}\label{eq:projectivestabilization}
 Q_p(T\times\mathbb{P}^a)\cong\prod_{i=0}^aQ_{p-i}(T),\qquad
 \Lambda_p(T\times\mathbb{P}^a)\cong\bigoplus_{i=0}^a\Lambda_{p-i}(T).
\end{equation}
In this formula $Q_j(T)=\Lambda_j(T)=0$ for $j<0$ or $j\geq\dim T$, consistently with the top-dimensional convention. All these lattices have finite rank.
\end{corollary}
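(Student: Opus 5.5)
The plan is to build the comparison from the projective bundle decomposition of cycle spaces and then read off each $Q_j(T)$ from the classical identifications in Corollary~\ref{cor:classical}. The decomposition is the iterated form of the map used in Theorem~\ref{thm:surfaceproduct},
\[
 \Psi_a:\prod_{i=0}^aZ_{p-i}(T)\longrightarrow Z_p(T\times\mathbb{P}^a),\qquad
 (z_i)\longmapsto\sum_i \mathrm{pr}^*z_i\cdot[L_{a-i}],
\]
where $L_{a-i}\subset\mathbb{P}^a$ is a linear subspace of dimension $i$. Concretely I would realize this as exterior product $z_i\times[L_i]$ with a fixed linear $i$-cycle, so that Proposition~\ref{prop:functoriality} applies directly. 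By \cite[Proposition 2.5]{FG}, iterated, $\Psi_a$ is a homotopy equivalence and a group isomorphism. It preserves algebraic and rational equivalence, and the integral projective bundle formula \cite[Theorem 3.3]{Fulton} shows that its inverse does as well.

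First I would check that $\Psi_a$ carries $\prod G_{p-i}(T)$ onto $G_p(T\times\mathbb{P}^a)$ and $\prod R_{p-i}(T)$ onto $R_p(T\times\mathbb{P}^a)$. This follows from the Chow projective bundle formula together with the component calculation of Lemma~\ref{lem:components}. Since $\Psi_a$ is a homeomorphism of cycle groups, it matches the closures as well, and hence $\prod\operatorname{CR}_{p-i}(T)$ with $\operatorname{CR}_p(T\times\mathbb{P}^a)$. It follows that $\Psi_a$ descends to a topological group isomorphism of Hausdorff quotients. The lattice statement then follows from $\Lambda=\pi_1Q$.

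The main obstacle is the claim that $\Psi_a$ is a homeomorphism, and not merely a homotopy equivalence. If only homotopy-level information is available, I would fall back on the argument of Corollary~\ref{cor:surfacequotient}. In that approach $\Psi_a$ descends to a continuous homomorphism $\prod Q_{p-i}(T)\to Q_p(T\times\mathbb{P}^a)$, and naturality of \eqref{eq:exact} together with the Lawson isomorphism $L_pH_{2p+1}(T\times\mathbb{P}^a)\cong\bigoplus_iL_{p-i}H_{2(p-i)+1}(T)$ identifies the lattice maps. What remains is to verify that the kernels $\pi_1\operatorname{CR}$ correspond, because only then does the map on the free quotients $\Lambda$ become an isomorphism.

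Here the hypothesis $\dim T\leq3$ enters. Every index $j=p-i$ with $0\leq j<\dim T$ is $0$, $\dim T-1$, or $\dim T-2$ (the last only when $\dim T=3$), so Corollary~\ref{cor:classical} identifies each $Q_j(T)$ with $\operatorname{Alb}$, $\operatorname{Pic}^0$, or $J_a^3$, and \eqref{eq:codim2homotopy} gives $\pi_1\operatorname{CR}_j(T)$ as $\ker\overline\Phi$. The target $T\times\mathbb{P}^a$ is handled by writing the regular Abel--Jacobi-type homomorphism $\prod_i\beta_{p-i}\circ\Psi_a^{-1}$. Its torsion injectivity reduces, via the Chow isomorphism, to the torsion injectivity of each classical map, so Theorem~\ref{thm:torsioncriterion} identifies $Q_p(T\times\mathbb{P}^a)$ with $\prod A_i$ directly. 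This avoids any homeomorphism question and also gives the finite rank. The indices with $j<0$ or $j\geq\dim T$ contribute zero by the conventions of Section~\ref{subsec:top}.
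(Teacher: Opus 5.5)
Your main route rests on a false claim. The exterior-product map $\Psi_a:\prod_iZ_{p-i}(T)\to Z_p(T\times\mathbb{P}^a)$, $(z_i)\mapsto\sum_iz_i\times[L_i]$, is a homotopy equivalence by the cycle-space projective bundle theorem. It is not, however, a group isomorphism of cycle groups, and so certainly not a homeomorphism. Its image consists only of combinations of product cycles with the fixed linear spaces $L_i$. Already for a curve $T$ and $a=1$, the graph of a nonconstant morphism $T\to\mathbb{P}^1$ is not in the image.

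The projective bundle formula is an isomorphism only after passing to Chow groups. Hence $\Psi_a$ does not carry $\prod_iG_{p-i}(T)$ onto $G_p(T\times\mathbb{P}^a)$, there is no cycle-level inverse $\Psi_a^{-1}$, and the matching of closures cannot be obtained this way. Your second route is also incomplete, as you note. Naturality of \eqref{eq:exact} identifies the lattice maps only once the $\pi_1\operatorname{CR}$ kernels are known to correspond. For $T\times\mathbb{P}^a$ that is not available until one knows its closure is an Abel--Jacobi kernel.

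Your final paragraph, however, is essentially the paper's proof, and it alone suffices. The argument runs as follows:
\begin{enumerate}
\item The Chow projective bundle formula restricts to $A_p(T\times\mathbb{P}^a)\cong\bigoplus_iA_{p-i}(T)$.
\item The hypothesis $\dim T\leq3$ makes every nonzero summand a zero-cycle, divisor, or codimension-two group on $T$.
\item The classical maps are surjective and torsion-injective by Corollary~\ref{cor:classical}.
\item Theorem~\ref{thm:torsioncriterion} then gives the quotient formula, and the lattice formula and finite rank follow on $\pi_1$.
\end{enumerate}

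Two repairs are needed. First, the homomorphism must be defined on Chow classes, as $\rho=(\bigoplus_i\rho_{p-i})\circ(\Psi_a)_*^{-1}$ on $A_p(T\times\mathbb{P}^a)$. It should not be written as $\prod_i\beta_{p-i}\circ\Psi_a^{-1}$ on cycles. Second, you only assert regularity of $\rho$, and it must be justified. One option is to observe that, with $h$ the hyperplane class, $(\Psi_a)_*^{-1}$ is given by the correspondences $z\mapsto\mathrm{pr}_{T*}(z\cdot\mathrm{pr}_{\mathbb{P}^a}^*h^{i})$. The composite is then regular, and Lemma~\ref{lem:regularcontinuity} gives continuity of its cycle realization. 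The other option, which is what the paper does, is to use integral K\"unneth to identify $\rho$ with the classical Abel--Jacobi map of $T\times\mathbb{P}^a$ onto its decomposed algebraic image, whose cycle realization is continuous.
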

\begin{proof}
The integral projective bundle formula gives $A_p(T\times\mathbb{P}^a)\cong\bigoplus_{i=0}^aA_{p-i}(T)$ \cite[Theorem 3.3]{Fulton}. Integral cohomological K\"unneth, with the appropriate Tate shifts, gives the corresponding decomposition of the classical Abel--Jacobi map and its algebraic image. Every nonzero summand on $T$ is a zero-cycle, divisor, or codimension two group. Its canonical map is injective on torsion by Corollary~\ref{cor:classical}. Hence the direct sum map is torsion-injective, and its cycle realization is continuous as the classical Abel--Jacobi map into the decomposed image. Theorem~\ref{thm:torsioncriterion} identifies the quotient with the product of the indicated Jacobians. Exterior product with linear spaces gives the displayed quotient maps; taking first homotopy groups proves the lattice formula.
\end{proof}

\subsection{Change under a threefold blow-up}
\begin{proposition}\label{prop:blowup}
Let $Y$ be a smooth projective threefold, let $C\subset Y$ be a smooth connected curve, and put $\widetilde Y=\operatorname{Bl}_CY$. Then
\begin{equation}\label{eq:blowup}
 Q_1(\widetilde Y)\cong Q_1(Y)\times J(C),\qquad
 \Lambda_1(\widetilde Y)\cong\Lambda_1(Y)\oplus H_1(C,\mathbb{Z}).
\end{equation}
Consequently $\operatorname{rank}\Lambda_1(\widetilde Y)=\operatorname{rank}\Lambda_1(Y)+2g(C)$.
\end{proposition}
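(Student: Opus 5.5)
Since $\widetilde Y$ is a smooth projective threefold, $p=1=n-2$. Corollary~\ref{cor:classical} therefore identifies $Q_1(\widetilde Y)$ with $J_a^3(\widetilde Y)$ and $Q_1(Y)$ with $J_a^3(Y)$, in both cases through the codimension two Abel--Jacobi map. The plan is to transport the blow-up formula for Chow groups and intermediate Jacobians into this description, and then to read off the lattices from the exact sequence \eqref{eq:exact}.

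First, let $j:E\hookrightarrow\widetilde Y$ be the exceptional divisor, let $\pi:E\to C$ be the $\mathbb{P}^1$-bundle, and let $\sigma:\widetilde Y\to Y$ be the blow-down. The blow-up formula \cite[Proposition 6.7(e)]{Fulton} gives a canonical isomorphism
\[
 \operatorname{CH}_1(Y)\oplus\operatorname{CH}_0(C)\xrightarrow{\ \sim\ }\operatorname{CH}_1(\widetilde Y),\qquad (a,b)\longmapsto\sigma^!a+j_*\pi^*b.
\]
Restricting to algebraically trivial classes gives
\[
 A_1(\widetilde Y)\cong A_1(Y)\oplus A_0(C),
\]
with $A_0(C)\cong J(C)$. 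On the cohomological side there is the integral decomposition
\[
 H^3(\widetilde Y,\mathbb{Z})\cong H^3(Y,\mathbb{Z})\oplus H^1(C,\mathbb{Z})(-1),
\]
given by $\sigma^*$ and $j_*\pi^*$. It is an isomorphism of Hodge structures, so
\[
 J^3(\widetilde Y)\cong J^3(Y)\times J(C).
\]
The Abel--Jacobi map is functorial for correspondences, which makes it compatible with both decompositions. Its image is therefore
\[
 J_a^3(\widetilde Y)\cong J_a^3(Y)\times J(C).
\]
Here the Abel--Jacobi map on $A_0(C)$ is the Abel map, which surjects onto $J(C)$.

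Second, I would show that the resulting map
\[
 \rho=(\operatorname{AJ}_Y,\operatorname{alb}_C)\circ(\text{decomposition}):A_1(\widetilde Y)\to J_a^3(Y)\times J(C)
\]
is a surjective regular homomorphism that is injective on torsion. Torsion injectivity is inherited componentwise: Murre's theorem holds for $Y$ by Corollary~\ref{cor:classical}, and Abel's theorem holds for the curve $C$, where $A_0(C)\to J(C)$ is an isomorphism. Regularity follows because the decomposition is induced by correspondences. Alternatively, one can simply note that $\rho$ agrees with $\operatorname{AJ}_{\widetilde Y}$ under the Jacobian isomorphism, and that map is already covered by Corollary~\ref{cor:classical}. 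Theorem~\ref{thm:torsioncriterion}, or directly Corollary~\ref{cor:classical} for $\widetilde Y$, then yields $Q_1(\widetilde Y)\cong J_a^3(Y)\times J(C)\cong Q_1(Y)\times J(C)$ as topological groups.

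Third, I would take $\pi_1$ of this isomorphism. Since $\pi_1Q=\Lambda$ by definition, this gives $\Lambda_1(\widetilde Y)\cong\Lambda_1(Y)\oplus H_1(J(C),\mathbb{Z})$, and $H_1(J(C),\mathbb{Z})=H_1(C,\mathbb{Z})$. The rank formula follows because $H_1(C,\mathbb{Z})$ has rank $2g(C)$.

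The main obstacle is the integral compatibility in the first step. One must check that the blow-up decomposition of $J^3$ carries the algebraic part $J_a^3(\widetilde Y)$ exactly onto $J_a^3(Y)\times J(C)$, with no isogeny appearing. The point is that the Chow and Hodge decompositions are induced by the same correspondences $\sigma^*$ and $j_*\pi^*$, so they match integrally rather than only up to finite index. I would verify this lattice by lattice using \eqref{eq:periodcompat}.
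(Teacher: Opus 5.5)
Your proposal is correct and follows essentially the same route as the paper: identify $Q_1$ of both threefolds with $J_a^3$ via Corollary~\ref{cor:classical}, use the integral blow-up decompositions of $\operatorname{CH}_1$ (restricted to algebraically trivial classes) and of $H^3$ to get $J_a^3(\widetilde Y)\cong J_a^3(Y)\times J(C)$, and then pass to $\pi_1$; these decompositions are induced by the same correspondences $\sigma^*$ and $j_*\pi^*$, so they are compatible with Abel--Jacobi. The separate torsion-injectivity check in your second step is redundant: as your own alternative notes, and as the paper does, Corollary~\ref{cor:classical} applies directly to $\widetilde Y$.
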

\begin{proof}
The integral blow-up formulas give
\[
 \operatorname{CH}^2(\widetilde Y)_{\mathrm{alg}}\cong\operatorname{CH}^2(Y)_{\mathrm{alg}}\oplus\operatorname{Pic}^0(C),\qquad
 H^3(\widetilde Y,\mathbb{Z})\cong H^3(Y,\mathbb{Z})\oplus H^1(C,\mathbb{Z})(-1);
\]
see \cite[Proposition 6.7]{Fulton}. The formulas are compatible with Abel--Jacobi. The exceptional $\mathbb{P}^1$-fibers over $C$ give the whole $J(C)$ summand, hence $J_a^3(\widetilde Y)\cong J_a^3(Y)\times J(C)$. Corollary~\ref{cor:classical} proves the quotient formula, and taking first homotopy groups gives the lattice formula.
\end{proof}
For $\widetilde Y=\operatorname{Bl}_C\mathbb{P}^3$ this gives $\Lambda_1(\widetilde Y)\cong\mathbb{Z}^{2g(C)}$, whereas $\Lambda_1(\mathbb{P}^3)=0$. Thus the invariant is not birationally invariant. Its variation in Corollary~\ref{cor:variation} also rules out deformation invariance, semicontinuity, and constructibility in general.

\section{Integral Walker quotients and lattice criteria}\label{sec:invariants}
In this section $X$ is smooth and $0\leq p<n$, with $r=n-p$. Let
\[
 J_W^{2r-1}(X)=J\bigl(N^{r-1}H^{2r-1}(X,\mathbb{Z}(r))\bigr)
\]
be the Walker intermediate Jacobian. For a pure integral Hodge structure $H$ of weight $-1$, put $H_{\mathbb{C}}=H\otimes_{\mathbb{Z}}\mathbb{C}$ and write $J(H)=H_{\mathbb{C}}/(F^0H_{\mathbb{C}}+H_{\mathrm{free}})$, with its specified lattice. Here $\mathbb{Z}(r)=(2\pi i)^r\mathbb{Z}$, and the integral coniveau group is the union of the kernels of restriction to $X\setminus W$, over closed subsets $W$ of codimension at least $r-1$. Finite unions of supports make this a subgroup. Torsion is removed in forming its lattice; the indicated Tate twist gives a weight-minus-one Hodge structure.

Walker supplies a surjective regular homomorphism on Chow classes
\begin{equation}\label{eq:walkerfactor}
 \psi_W:A_p(X)\longrightarrow J_W^{2r-1}(X),\qquad
 \operatorname{AJ}=\alpha\psi_W,	\alpha:J_W^{2r-1}(X)\longrightarrow J_a^{2r-1}(X),
\end{equation}
where $\alpha$ is an isogeny \cite{Walker,ACMV22}; see also Suzuki's direct construction \cite[Theorem 1.1]{SuzukiFactor}. Write
\[
 \beta_W:G_p(X)\longrightarrow J_W^{2r-1}(X)
\]
for its realization on cycles. Regularity is \cite[Corollary 2.6 and Remark 2.7]{SuzukiFactor}; Lemma~\ref{lem:regularcontinuity} gives continuity on cycles. The finite dimensional Walker Jacobian here is distinct from the morphic Jacobian in \cite[Theorem 5.7]{Walker}. Thus $\beta_W$ descends to $\bar\beta_W:Q_p(X)\to J_W^{2r-1}(X)$, and
\begin{equation}\label{eq:walkerkernels}
 \operatorname{CR}_p(X)\subset\ker\beta_W\subset\ker\operatorname{AJ}.
\end{equation}
The final kernel is taken on $G_p(X)$. If $\psi_W$ is injective on torsion, Theorem~\ref{thm:torsioncriterion} makes the first inclusion equality. Suzuki's theorem says that the abstract Chow kernel $\ker\psi_W$ is divisible \cite[Theorem 2.3]{ACMV22}; it does not say that it is torsion free. The single prime criteria of Theorem~\ref{thm:primarycriterion} below are separate sufficient conditions.

\subsection{The invariant family and periods of curve families}
The family $(\Lambda_0(X),\ldots,\Lambda_n(X))$ is intrinsic to the algebraic variety with its cycle topology. Each member is free abelian, so its abstract isomorphism class is determined by its finite or countably infinite rank. The natural maps from Lawson homology and to period lattices contain further information. In particular, an abstract lattice does not recover a Jacobian's complex structure or polarization.

For smooth connected $X$, the established identifications are
\begin{center}
\begin{tabular}{@{}ll@{}}
\toprule
Cycle dimension & Quotient lattice \\
\midrule
$p=0$ & $H_1(X,\mathbb{Z})_{\mathrm{free}}$ \\
$p=n-1$ & $H^1(X,\mathbb{Z})$ \\
$p=n-2$, $n\geq3$ & $H_1(J_a^3(X),\mathbb{Z})$ \\
$p=n$ & $0$ \\
\bottomrule
\end{tabular}
\end{center}
Thus every entry in dimensions at most three has a classical interpretation and finite rank. At the remaining indices $1\leq p\leq n-3$, beginning with one-cycles on a fourfold, the Walker lattice and an additional lattice enter the calculation. None of the results below identifies that additional lattice for a general variety.

\begin{proposition}\label{prop:curvegeneration}
A signed projective algebraic family of $p$-cycles over a smooth connected projective curve $C$ induces continuous homomorphisms $J(C)\to Q_p(X)$ and $H_1(C,\mathbb{Z})\to\Lambda_p(X)$. Moreover,
\begin{equation}\label{eq:curvegeneration}
 \Lambda_p(X)\otimes\mathbb{Q}
 =\sum_{(C,\Gamma)}\operatorname{im}\bigl(H_1(C,\mathbb{Q})\longrightarrow\Lambda_p(X)\otimes\mathbb{Q}\bigr).
\end{equation}
Here a signed family means a finite signed sum of families represented by Chow morphisms; the associated graph families over the smooth curve may be taken flat.
\end{proposition}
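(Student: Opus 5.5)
The first assertion reduces to Lemma~\ref{lem:familyfactor}. Write the signed family as $\Gamma=\sum_i\epsilon_i f_i$ with Chow morphisms $f_i:C\to C_{p,d_i}(X)$. Each $\Gamma_{f_i}:Z_0(C)^0\to G_p(X)$ is a continuous homomorphism, and $a_C$ is the Abel--Jacobi map $Z_0(C)^0\to J(C)$. Lemma~\ref{lem:familyfactor} with $T=C$ gives continuous homomorphisms $\alpha_{f_i}:J(C)\to Q_p(X)$. We then set
\[
 \alpha_\Gamma=\sum_i\epsilon_i\alpha_{f_i},
\]
which is continuous because $Q_p(X)$ is a topological group. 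Applying $\pi_1$ and using $\pi_1J(C)=H_1(C,\mathbb{Z})$ gives the lattice map $H_1(C,\mathbb{Z})\to\Lambda_p(X)$. Flatness of the graph families plays no role here; only the Chow morphisms are used.

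For \eqref{eq:curvegeneration}, the inclusion of the right side in the left is immediate from the first part. For the reverse inclusion, fix $\lambda\in\Lambda_p(X)$. By Theorem~\ref{thm:main}(iii), $\lambda$ lies in the image of $H_1(\mathcal{A}_d,\mathbb{Z})$ for some $d$.

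By the construction in Proposition~\ref{prop:exhaustion}, $\mathcal{A}_d$ is the image of a finite product $B_d$. The factors of $B_d$ are parameter Albaneses $\operatorname{Alb}(T_V)$ and curve Jacobians $J(D_i)$ from the curve presentation \eqref{eq:curvepresentation}, and the maps to $Q_p(X)$ are the homomorphisms $\alpha_f$. The surjection $b_d:B_d\to\mathcal{A}_d$ of tori has full-rank lattice map, possibly of finite index, so $\lambda\otimes1$ lies in the rational image of $H_1(B_d,\mathbb{Q})$. This image is the sum of the images of the individual factors.

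Curve Jacobian factors are already of the required form. For a factor $\operatorname{Alb}(T_V)$, choose as in Lemma~\ref{lem:albsections} a smooth ample complete intersection curve $D\subset T_V$ with $J(D)\to\operatorname{Alb}(T_V)$ surjective. Restricting $f_V$ to $D$ gives a family over a curve, and naturality of $a$ gives $\alpha_{f_V|_D}=\alpha_{f_V}\circ(J(D)\to\operatorname{Alb}(T_V))$. Since $J(D)\to\operatorname{Alb}(T_V)$ is a surjection of tori, it is rationally surjective on $H_1$. So each factor's rational image is contained in a sum of curve-family images, which proves \eqref{eq:curvegeneration}.

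The main point needing care is the rational lattice surjectivity $H_1(B_d,\mathbb{Q})\to H_1(\mathcal{A}_d,\mathbb{Q})$. It holds because a surjective homomorphism of compact connected tori induces a surjective real linear map on universal covers, so its lattice image has finite index. The remaining point is keeping track of signs, which are absorbed by $\epsilon_i$ or by inversion on $J(C)$. The final remark about flat graph families is a side observation: after normalizing, each Chow morphism from a smooth curve has a flat graph family over that curve, since torsion-free sheaves on a smooth curve are flat. It is not needed for the homomorphisms.
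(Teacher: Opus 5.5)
Your proposal is correct and follows the paper's proof step for step. The family map factors through $J(C)$ by Lemma~\ref{lem:familyfactor}. Each $\mathcal{A}_d$ is the image of the finite product $B_d$, whose parameter Albanese factors are in turn covered by curve Jacobians, and a surjection of tori is surjective on rational first homology, so passing to the union gives \eqref{eq:curvegeneration}. Your added details, namely the naturality identity $\alpha_{f_V|_D}=\alpha_{f_V}\circ(J(D)\to\operatorname{Alb}(T_V))$ and the finite-index lattice image, are exactly the points the paper's short proof leaves implicit.
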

\begin{proof}
The signed family map factors through $J(C)$ by Lemma~\ref{lem:familyfactor}. Every $\mathcal{A}_d$ is covered by a finite product of parameter Albaneses, each covered by a curve Jacobian. The resulting torus surjection is surjective on rational first homology. Passing to the union proves \eqref{eq:curvegeneration}.
\end{proof}
Integral generation by these particular Jacobian maps is not claimed. A torus surjection can have a finite-index image on integral lattices. The full integral lattice is determined by the primitive inclusions of the actual image tori $\mathcal{A}_d$.

\subsection{The canonical Walker quotient}\label{subsec:walkerquotient}
Assume $X$ is smooth, and retain the distinction between the Chow map $\psi_W$, its cycle realization $\beta_W$, and its quotient realization $\bar\beta_W$. Set
\[
 L_W(X)=H_1(J_W^{2r-1}(X),\mathbb{Z})
 \cong\bigl(N^{r-1}H^{2r-1}(X,\mathbb{Z}(r))\bigr)_{\mathrm{free}}.
\]
Poincar\'e duality and the Tate normalization give the period injection
\[
 \iota_W:L_W(X)\hookrightarrow H_{2p+1}(X,\mathbb{Z})_{\mathrm{free}}.
\]
For any abelian group $A$, write $A[N]=\ker(N:A\to A)$ and $A[\ell^\infty]=\bigcup_{a\geq1}A[\ell^a]$ for a prime $\ell$. In particular, $A_p(X)[\ell]$ consists of Chow classes $[z]$ with $\ell z\in R_p(X)$, not cycles with $\ell z=0$. Write $\mathbb{Z}_\ell$ for the $\ell$-adic integers and $\mathbb{Q}_\ell$ for their fraction field. The Pr\"ufer group is $\mathbb{Q}_\ell/\mathbb{Z}_\ell\cong\mathbb{Z}[1/\ell]/\mathbb{Z}$.

\begin{theorem}\label{thm:walkerquotient}
The descended Walker homomorphism $\bar\beta_W:Q_p(X)\to J_W^{2r-1}(X)$ is surjective. Its map on fundamental groups
\[
 \rho_W:\Lambda_p(X)\longrightarrow L_W(X)
\]
is surjective over $\mathbb{Z}$ and satisfies
\begin{equation}\label{eq:walkerperiod}
 \iota_W\rho_W\kappa_p=\overline{\Phi}_{p,2p+1}.
\end{equation}
Set $K_p^{\mathrm{per}}(X)=\ker\rho_W$ and $\mathcal{T}_p(X)=\ker\bar\beta_W$. Then $\mathcal{T}_p(X)$ is a finite or countable weak product of circles, and there are noncanonical splittings
\begin{equation}\label{eq:walkersplitting}
 Q_p(X)\cong J_W^{2r-1}(X)\times\mathcal{T}_p(X),\qquad
 \Lambda_p(X)\cong L_W(X)\oplus K_p^{\mathrm{per}}(X).
\end{equation}
The first is a topological group isomorphism. The residual lattice has the canonical description
\begin{equation}\label{eq:residuallattice}
 K_p^{\mathrm{per}}(X)\cong
 \frac{\ker\overline{\Phi}_{p,2p+1}}{\operatorname{im}\bigl(\pi_1\operatorname{CR}_p(X)\longrightarrow L_pH_{2p+1}(X)\bigr)}.
\end{equation}
Abstractly, $\mathcal{T}_p(X)\cong\ker\psi_W/D_p(X)$.
\end{theorem}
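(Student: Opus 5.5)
The plan is to derive everything from the structure already in hand: Theorem~\ref{thm:main}, the regularity and continuity of $\beta_W$ (Lemma~\ref{lem:regularcontinuity}), and the period compatibility \eqref{eq:periodcompat}. I will take the steps in order.

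\emph{Step 1: surjectivity of $\bar\beta_W$.} Since $\psi_W$ is surjective on $A_p(X)=G_p(X)/R_p(X)$, the cycle realization $\beta_W$ is surjective, and hence so is $\bar\beta_W$. By the argument of Proposition~\ref{prop:lowerbound}, the nested compact tori $\bar\beta_W(\mathcal{A}_d)$ stabilize, so some $\mathcal{A}_d$ surjects onto $J_W^{2r-1}(X)$.

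\emph{Step 2: integral surjectivity of $\rho_W$ (the main obstacle).} A surjection of tori may be an isogeny on lattices, so the torus argument alone only gives finite index. I would use the homotopy of the fibration $q_p$. By \eqref{eq:exact}, $\kappa_p$ is surjective, so it suffices to prove that
\[
 (\beta_W)_*=\rho_W\kappa_p:L_pH_{2p+1}(X)\to L_W(X)
\]
is surjective. Choose a local section of $\beta_W$ through zero, built as in Proposition~\ref{prop:localsections} from curve Jacobians and Abel maps; this makes $\beta_W$ a locally trivial bundle over the torus $J_W^{2r-1}(X)$. The homotopy sequence then gives
\[
 \pi_1G_p\to\pi_1J_W\to\pi_0\ker\beta_W\to\pi_0G_p=0.
\]
So integral surjectivity is equivalent to connectedness of $\ker\beta_W$. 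For that, note that $\ker\beta_W\supset\operatorname{CR}_p(X)$, and Suzuki's divisibility of $\ker\psi_W$ \cite[Theorem 2.3]{ACMV22} says every class in $\ker\psi_W$ is divisible. I would try to show that $\ker\beta_W/\operatorname{CR}_p(X)=\mathcal{T}_p(X)$ is a connected torus union. Its components form a discrete quotient of $\ker\beta_W$ and are in fact finite at each stage $\mathcal{A}_d$. A discrete quotient that is divisible (inherited from $\ker\psi_W$) and finite on each compact stage must be trivial. This is the delicate point, and I expect it to need care with the finite-kernel issue at each stage $\mathcal{A}_d\cap\ker\bar\beta_W$.

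\emph{Step 3: formula \eqref{eq:walkerperiod}.} This follows from \eqref{eq:periodcompat} together with $\operatorname{AJ}=\alpha\psi_W$. Since $\alpha$ is an isogeny, $\alpha_*$ is injective on lattices and commutes with the period injections. Hence $\iota_W(\beta_W)_*$ agrees with $\overline{\Phi}_{p,2p+1}$.

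\emph{Step 4: splittings and the residual lattice.} Once $\ker\bar\beta_W=\mathcal{T}_p(X)$ is connected, it is a closed subgroup of the weak product $Q_p(X)$, and its intersections with the $\mathcal{A}_d$ are subtori. Lemma~\ref{lem:tori} then makes it a weak product of circles. Extending a lattice basis of $\ker\rho_W$ compatibly, and using primitivity, which holds because $\rho_W$ is surjective onto a free lattice, gives the group splitting. The lattice splitting is the induced map on $\pi_1$.

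For \eqref{eq:residuallattice}, apply \eqref{eq:walkerperiod} with $\iota_W$ injective. This gives $\ker(\rho_W\kappa_p)=\ker\overline{\Phi}_{p,2p+1}$. Since $\kappa_p$ is surjective with kernel $\operatorname{im}\pi_1\operatorname{CR}_p$, we get
\[
 \ker\rho_W\cong\ker\overline{\Phi}_{p,2p+1}/\operatorname{im}\pi_1\operatorname{CR}_p(X).
\]
Finally, the abstract identification holds because $Q_p=A_p/D_p$ and $\bar\beta_W$ is induced by $\psi_W$.
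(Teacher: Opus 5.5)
Your Step 1, Step 3, the residual-lattice formula \eqref{eq:residuallattice} and the abstract identification of $\mathcal{T}_p(X)$ match the paper. For integral surjectivity of $\rho_W$ you take a different route, and its key step is wrong as stated.

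\textbf{How the paper does it.} The paper does not argue through connectedness. It quotes Walker's integral equality $\operatorname{im}\Phi_{p,2p+1}=N_{p+1}H_{2p+1}(X,\mathbb{Z})$. Poincar\'e duality with supports identifies the image of this group in $H_{2p+1}(X,\mathbb{Z})_{\mathrm{free}}$ with $\iota_W(L_W(X))$. Surjectivity of $\rho_W\kappa_p$ then follows from \eqref{eq:walkerperiod} and injectivity of $\iota_W$.

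\textbf{The gap in your Step 2.} You claim that a divisible discrete group which is finite on each compact stage must be trivial. This is false: $\mathbb{Q}_\ell/\mathbb{Z}_\ell=\operatorname{colim}_a\mathbb{Z}/\ell^a\mathbb{Z}$ is divisible, nonzero, and finite at every stage. Stagewise finiteness of $\pi_0(\mathcal{T}_p\cap\mathcal{A}_d)$ therefore cannot be what kills $\pi_0\ker\beta_W$.

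What you need is that the component group is finite outright. Your own ingredients give this:
\begin{enumerate}
\item Your exact sequence gives $\pi_0\ker\beta_W\cong L_W(X)/\operatorname{im}(\beta_W)_*$.
\item By Step 1 some $\mathcal{A}_d$ surjects onto $J_W^{2r-1}(X)$. Hence $H_1(\mathcal{A}_d,\mathbb{Z})\to L_W(X)$, and a fortiori $(\beta_W)_*$, already has finite-index image.
\item Equivalently, for $d$ beyond that stage, $\pi_0(\mathcal{T}_p\cap\mathcal{A}_d)\cong\operatorname{coker}\bigl(H_1(\mathcal{A}_d,\mathbb{Z})\to L_W(X)\bigr)$, and the transition maps are surjections of finite groups.
\end{enumerate}
Divisibility also needs a sentence. $R_p(X)$ is path connected by Lemma~\ref{lem:components} and lies in $\ker\beta_W$, so it sits in the identity path component. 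Hence $\pi_0\ker\beta_W$ is a quotient of $\ker\beta_W/R_p(X)\cong\ker\psi_W$, which is divisible by Suzuki's theorem. A finite divisible group is zero. Your local section of $\beta_W$ is fine: take a Lie-group local section of the surjection $B_d\to J_W^{2r-1}(X)$ and compose it with the Albanese local sections.

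\textbf{What the two routes buy.} With this repair, your route gives path connectedness of $W_p(X)$ directly. Combined with \eqref{eq:walkerperiod}, it even recovers $\operatorname{im}\overline{\Phi}_{p,2p+1}=\iota_W(L_W(X))$ from divisibility. The paper's route needs neither the divisibility theorem nor a bundle structure on $\beta_W$.

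\textbf{Two smaller points in Step 4.}
\begin{enumerate}
\item Connectedness of $\mathcal{T}_p(X)$ does not make every $\mathcal{T}_p\cap\mathcal{A}_d$ a torus. It does so only for $d\geq e$, where $H_1(\mathcal{A}_e,\mathbb{Z})\to L_W(X)$ is onto, so you must reindex the exhaustion from $e$. The paper gets connectedness of these factors from $\mathcal{A}_d\cong(\mathcal{A}_d\cap\mathcal{T}_p)\times J_W^{2r-1}(X)$.
\item The topological group splitting should come from splitting the lattice surjection $H_1(\mathcal{A}_e,\mathbb{Z})\to L_W(X)$ and extending real-linearly to a continuous homomorphism $J_W^{2r-1}(X)\to\mathcal{A}_e$. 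Extending a basis of $\ker\rho_W$ does not by itself produce this homomorphism.
\end{enumerate}
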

\begin{proof}
Continuity and rational invariance give the descended surjection. The equality \eqref{eq:walkerperiod} follows from \eqref{eq:periodcompat} and $\operatorname{AJ}=\alpha\beta_W$ on cycles: the composite lattice map for $\alpha$ and inclusion of the algebraic Jacobian is exactly $\iota_W$.

Let $N_{p+1}H_{2p+1}(X,\mathbb{Z})$ denote the sum of the images of $H_{2p+1}(V,\mathbb{Z})$ for closed subvarieties $V\subset X$ with $\dim V\leq p+1$. Walker proves the integral equality
\[
 \operatorname{im}\Phi_{p,2p+1}=N_{p+1}H_{2p+1}(X,\mathbb{Z})
\]
in \cite[Proposition 2.8, p.~915]{Walker}. For smooth $X$, Poincar\'e duality with supports identifies this subgroup with the integral coniveau subgroup in the definition of $J_W^{2r-1}(X)$. Its image in $H_{2p+1}(X,\mathbb{Z})_{\mathrm{free}}$ is therefore exactly $\iota_W(L_W(X))$. In particular, one must not replace this image by its saturation. Formula~\eqref{eq:walkerperiod} and injectivity of $\iota_W$ show that $\rho_W\kappa_p$ is surjective onto $L_W(X)$; hence $\rho_W$ is surjective over $\mathbb{Z}$. This argument uses the integral theorem, not only a rational coniveau comparison.

Choose a basis of the finite free group $L_W(X)$ and lift it to $\Lambda_p(X)$. All lifts occur in $H_1(\mathcal{A}_e,\mathbb{Z})$ for some $e$. The map $\mathcal{A}_e\to J_W^{2r-1}(X)$ is surjective on lattices. Splitting that lattice map and extending the section real linearly gives a continuous group section
\[
 \sigma:J_W^{2r-1}(X)\longrightarrow \mathcal{A}_e\subset Q_p(X).
\]
The map $t\mapsto t-\sigma\bar\beta_W(t)$ now gives the first product decomposition in \eqref{eq:walkersplitting}. For $d\geq e$, restriction yields
\[
 \mathcal{A}_d\cong\bigl(\mathcal{A}_d\cap\mathcal{T}_p(X)\bigr)\times J_W^{2r-1}(X).
\]
The first factor is a compact connected real torus: the kernel in this product is connected because $\mathcal{A}_d$ is connected. These factors exhaust the closed subgroup $\mathcal{T}_p(X)$ with its induced topology. Indeed, a subset closed on each of these factors is closed on each $\mathcal{A}_d$, hence closed in $Q_p(X)$ by its final topology. Lemma~\ref{lem:tori} applies and proves the claimed weak product structure and lattice splitting.

Finally, injectivity of $\iota_W$, \eqref{eq:walkerperiod}, and \eqref{eq:exact} give \eqref{eq:residuallattice}. Since $Q_p=A_p/D_p$, its Walker kernel is abstractly $\ker\psi_W/D_p$. These are Chow-class quotients, not identifications of the unclosed rational equivalence subgroup with a topological fiber.
\end{proof}

The group splitting \eqref{eq:walkersplitting} must be distinguished from the space splitting \eqref{eq:globalproduct}. The former splits a torus-lattice map; the latter splits a quotient of the torsion free group of cycles and cannot be a group splitting when its quotient is nonzero.

The Walker factor has its specified Hodge structure; a pure Hodge structure on the full lattice $\Lambda_p$ is not furnished by the topological construction. The rank identities and bound are
\begin{equation}\label{eq:rankwalker}
 \begin{split}
 \operatorname{rank}\Lambda_p(X)&=\dim_{\mathbb{Q}}N^{r-1}H^{2r-1}(X,\mathbb{Q})+\operatorname{rank} K_p^{\mathrm{per}}(X),\\
 \dim_{\mathbb{Q}}N^{r-1}H^{2r-1}(X,\mathbb{Q})&=2\dim_{\mathbb{C}}J_W^{2r-1}(X)\leq2h^{r-1,r}(X).
 \end{split}
\end{equation}
The inequality bounds the Walker factor only, not the possible residual lattice.

\begin{corollary}\label{cor:walkerkernel}
The cycle level Walker kernel $W_p(X)=\ker\beta_W\subset G_p(X)$ is path connected, locally contractible, and of CW homotopy type. Its homotopy groups are
\begin{equation}\label{eq:walkerkernelhomotopy}
 \pi_1W_p(X)=\ker\overline{\Phi}_{p,2p+1},\qquad
 \pi_mW_p(X)=L_pH_{2p+m}(X)\quad(m\geq2).
\end{equation}
If $F=\ker(\alpha:J_W^{2r-1}(X)\to J_a^{2r-1}(X))$, then
\begin{equation}\label{eq:AJcomponents}
 \pi_0\ker\bigl(\operatorname{AJ}:G_p(X)\to J_a^{2r-1}(X)\bigr)\cong F,
 \qquad\operatorname{coker}\operatorname{AJ}_*\cong F.
\end{equation}
More precisely, there are noncanonical homeomorphisms of spaces
\begin{equation}\label{eq:kernelproducts}
 \begin{split}
 W_p(X)&\cong\operatorname{CR}_p(X)\times\mathcal{T}_p(X),\\
 \ker\operatorname{AJ}&\cong\operatorname{CR}_p(X)\times\mathcal{T}_p(X)\times F,
 \end{split}
\end{equation}
where $F$ is discrete and the classical kernel is taken on $G_p(X)$. Moreover, put
\[
 H=H_{2p+1}(X,\mathbb{Z}),\qquad
 N=\operatorname{im}\Phi_{p,2p+1},\qquad
 \overline H=H/H_{\mathrm{tors}},\qquad
 \overline N=\operatorname{im}(N\longrightarrow\overline H).
\]
The saturation $\overline N^{\mathrm{sat}}=\overline H\cap(\overline N\otimes\mathbb{Q})$ is taken inside $\overline H\otimes\mathbb{Q}$. There is a canonical isomorphism
\begin{equation}\label{eq:saturationcomponents}
 F\cong \overline N^{\mathrm{sat}}/\overline N.
\end{equation}
Consequently, the classical cycle level Abel--Jacobi kernel is connected if and only if $\overline N$ is saturated in $\overline H$.
\end{corollary}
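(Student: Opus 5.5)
The plan is to treat $W_p(X)$ as the preimage under $q_p$ of the closed subgroup $\mathcal{T}_p(X)=\ker\bar\beta_W$. Then every assertion follows by restricting the global trivialization of Theorem~\ref{thm:main}(ii) and reading off lattices from Theorem~\ref{thm:walkerquotient}.

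\emph{The product formulas and homotopy groups of $W_p(X)$.} Since $\beta_W=\bar\beta_W q_p$, we have $W_p(X)=q_p^{-1}(\mathcal{T}_p(X))$. The homeomorphism \eqref{eq:globalproduct}, $(c,t)\mapsto c+s(t)$, commutes with $q_p$, so it restricts to a homeomorphism $\operatorname{CR}_p(X)\times\mathcal{T}_p(X)\cong W_p(X)$. This is the first line of \eqref{eq:kernelproducts}. By Theorem~\ref{thm:walkerquotient}, $\mathcal{T}_p(X)$ is a weak product of circles, hence a path connected, locally contractible, aspherical CW complex. Combined with Theorem~\ref{thm:main}(ii) for the closure, this gives path connectedness, local contractibility, and CW homotopy type of the product. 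For $m\geq2$ the homotopy groups are those of $\operatorname{CR}_p(X)$, which \eqref{eq:higher} identifies with Lawson homology. For $\pi_1$, the restricted fibration gives $0\to\pi_1\operatorname{CR}_p\to\pi_1W_p\to K_p^{\mathrm{per}}\to0$, since $\pi_1\mathcal{T}_p=\ker\rho_W=K_p^{\mathrm{per}}(X)$. The image of $\pi_1W_p$ in $L_pH_{2p+1}(X)$ is $\kappa_p^{-1}(\ker\rho_W)=\ker(\rho_W\kappa_p)$, and by \eqref{eq:walkerperiod} together with injectivity of $\iota_W$ this equals $\ker\overline{\Phi}_{p,2p+1}$. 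Injectivity of $\pi_1W_p\to\pi_1G_p$ follows from the product structure, because the section contributes the $\mathcal{T}_p$ factor inside $Q_p$.

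\emph{The classical kernel.} On cycles we have $\operatorname{AJ}=\alpha\beta_W$, so $\ker\operatorname{AJ}=\beta_W^{-1}(F)$. The Walker realization $\beta_W$ is surjective, and each fiber over a point of $F$ is a translate of $W_p(X)$ by a chosen cycle. These fibers are disjoint and open and closed, since $F$ is finite and the quotient map to $F$ is continuous with discrete target. This gives $\ker\operatorname{AJ}\cong W_p(X)\times F$ and $\pi_0\cong F$. For the cokernel I will argue exactly as in Proposition~\ref{prop:isogeny}. The isogeny sequence $0\to L_W\to H_1(J_a)\to F\to0$ and the surjectivity of $\rho_W\kappa_p$ onto $L_W$ give $\operatorname{coker}\operatorname{AJ}_*\cong F$.

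\emph{Saturation.} By \eqref{eq:periodcompat}, the image of $\operatorname{AJ}_*$ inside $H_1(J_a^{2r-1},\mathbb{Z})\subset\overline H$ is $\overline N$. By Walker's integral equality used in the proof of Theorem~\ref{thm:walkerquotient}, this image also equals $\iota_W(L_W)$. So it remains to show that $H_1(J_a^{2r-1}(X),\mathbb{Z})=\overline N^{\mathrm{sat}}$. This is the main obstacle. I would argue that the lattice of the abelian subvariety $J_a\subset J^{2r-1}$ is saturated in the full lattice $\overline H$, because the quotient $J^{2r-1}/J_a$ is a torus with torsion free lattice. I would also check that it has the same rational span as $\overline N$, since $\overline N$ has finite index in it by the isogeny. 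Together these give $H_1(J_a)=\overline H\cap(\overline N\otimes\mathbb{Q})$, and hence $F\cong\overline N^{\mathrm{sat}}/\overline N$. Connectedness of $\ker\operatorname{AJ}$ is then equivalent to $F=0$, that is, to saturation of $\overline N$ in $\overline H$.
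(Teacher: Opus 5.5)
Your proof is correct, but it takes a different route from the paper in two places.

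\emph{Structure of $W_p(X)$.} The paper composes the group section $\sigma$ of Theorem~\ref{thm:walkerquotient} with a local section of $q_p$. This makes $\beta_W$ itself a numerable principal $W_p(X)$-bundle over $J_W^{2r-1}(X)$. The paper then obtains local contractibility from the retraction argument, CW homotopy type from Sch\"on's fiber theorem, and path connectedness together with \eqref{eq:walkerkernelhomotopy} from the homotopy exact sequence, whose $\pi_1$-map $\rho_W\kappa_p$ is surjective. The splittings \eqref{eq:kernelproducts} come only afterwards. You reverse this order: you restrict the global trivialization of $q_p$ over $\mathcal{T}_p(X)$ first and read everything off the product $\operatorname{CR}_p(X)\times\mathcal{T}_p(X)$. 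This is economical, with two points to note. The CW-type claim now relies on the standard but unstated fact that a product of two spaces of CW homotopy type has CW homotopy type (Milnor). The $\pi_1$ identification relies on your check that $\pi_1\mathcal{T}_p(X)\to\Lambda_p(X)$ is the inclusion of $\ker\rho_W$ coming from the group splitting \eqref{eq:walkersplitting}.

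\emph{The saturation formula \eqref{eq:saturationcomponents}.} The paper invokes Walker's exact sequence \cite[Corollary 5.9 and Remark 5.10]{Walker}, which identifies $F$ with $\operatorname{coker}(H_{\mathrm{tors}}\to(H/N)_{\mathrm{tors}})$, and then lifts torsion from $\overline H/\overline N$. Your argument avoids that citation and uses three ingredients:
\begin{enumerate}
\item \eqref{eq:periodcompat}, which identifies $\operatorname{im}\operatorname{AJ}_*$ with $\overline N$ inside $\overline H$;
\item the already established $\operatorname{coker}\operatorname{AJ}_*\cong F$, which shows $\overline N$ has finite index in $H_1(J_a^{2r-1}(X),\mathbb{Z})$;
\item saturation of the lattice of the subtorus $J_a^{2r-1}(X)\subset J^{2r-1}(X)$, since the quotient torus has a torsion-free lattice.
\end{enumerate}
Together these give $H_1(J_a^{2r-1}(X),\mathbb{Z})=\overline N^{\mathrm{sat}}$, so $F\cong\overline N^{\mathrm{sat}}/\overline N$, and the isomorphism is still canonical.

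Your version is more self-contained, and it yields as a byproduct the description of the algebraic Jacobian's integral lattice as the saturation of the cycle-map image. The paper's version ties $F$ directly to Walker's torsion description and makes explicit why $(H/N)_{\mathrm{tors}}$ alone is the wrong group when $H$ has torsion.
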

\begin{proof}
Compose the section $\sigma$ of Theorem~\ref{thm:walkerquotient}, locally near zero, with a local section of $q_p$. This gives a local section of $\beta_W$. Its translates make $\beta_W$ a numerable principal $W_p$-bundle over the compact torus. The retraction argument of Theorem~\ref{thm:main} gives local contractibility, and \cite[Proposition 3]{Schon} gives CW homotopy type. Its first homotopy map is $\rho_W\kappa_p$, which is surjective. The homotopy exact sequence proves path connectedness and \eqref{eq:walkerkernelhomotopy}.

The inverse image of each $t\in F$ is a translate of $W_p$; these are open and closed in the classical kernel and are precisely its components. On lattices, $\operatorname{AJ}_* = \alpha_*\rho_W\kappa_p$, with the last two maps surjective. The isogeny lattice map has cokernel $F$, proving \eqref{eq:AJcomponents}.

For \eqref{eq:kernelproducts}, restrict the global trivialization of $q_p$ to the inverse image of $\mathcal{T}_p(X)\subset Q_p(X)$. Since $\beta_W=\bar\beta_Wq_p$, this inverse image is $W_p(X)$ and gives the first homeomorphism. Under the group splitting \eqref{eq:walkersplitting}, $\bar\beta_W^{-1}(F)$ is identified with $F\times\mathcal{T}_p(X)$. Restricting the same global trivialization gives the second homeomorphism. These last splittings are assertions about spaces, not about their group laws.

For \eqref{eq:saturationcomponents}, Walker's exact sequence \cite[Corollary 5.9 and Remark 5.10, p.~932]{Walker} identifies $F$ with the cokernel of
\[
 H_{\mathrm{tors}}\longrightarrow (H/N)_{\mathrm{tors}}.
\]
The quotient of $H/N$ by the image of $H_{\mathrm{tors}}$ is $\overline H/\overline N$. Since that image is finite, every torsion element of this latter quotient lifts to a torsion element of $H/N$. Hence the cokernel is canonically $(\overline H/\overline N)_{\mathrm{tors}}=\overline N^{\mathrm{sat}}/\overline N$. The connectedness assertion now follows from \eqref{eq:AJcomponents}. Without a torsion freeness assumption on $H$, the group $(H/N)_{\mathrm{tors}}$ alone is not the component group.
\end{proof}

\subsection{A comparison criterion}
\begin{proposition}[A Lawson comparison criterion]\label{prop:comparisoncriterion}
Let $X$ be smooth connected projective and $0\leq p<n$. If
\[
 \Phi_{p,2p+1}\otimes\mathbb{Q}:
 L_pH_{2p+1}(X)_{\mathbb{Q}}\longrightarrow H_{2p+1}(X,\mathbb{Q})
\]
is injective, then the descended Walker map is an isomorphism of topological groups. In particular,
\begin{equation}\label{eq:comparisoncriterion}
 \begin{gathered}
 Q_p(X)\cong J_W^{2r-1}(X),\qquad
 \Lambda_p(X)\cong L_W(X),\qquad
 \operatorname{CR}_p(X)=\ker\beta_W,\\
 \pi_1\operatorname{CR}_p(X)=\ker\overline{\Phi}_{p,2p+1}
 \quad\text{is a torsion group}.
 \end{gathered}
\end{equation}
Moreover, $D_p(X)=\ker\psi_W$ is divisible. Torsion freeness of this group is not asserted under the stated hypothesis.
\end{proposition}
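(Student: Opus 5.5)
The plan is to show that the residual lattice $K_p^{\mathrm{per}}(X)$ of Theorem~\ref{thm:walkerquotient} vanishes. Everything else then follows from the splitting \eqref{eq:walkersplitting}. By \eqref{eq:residuallattice},
\[
 K_p^{\mathrm{per}}(X)\cong\ker\overline{\Phi}_{p,2p+1}\big/\operatorname{im}\bigl(\pi_1\operatorname{CR}_p(X)\to L_pH_{2p+1}(X)\bigr),
\]
and $K_p^{\mathrm{per}}(X)$ is a subgroup of the free abelian group $\Lambda_p(X)$, so it is torsion free. It therefore suffices to show that $\ker\overline{\Phi}_{p,2p+1}$ is a torsion group. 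Then $K_p^{\mathrm{per}}(X)$ is a torsion quotient that is also torsion free, hence zero.

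For the torsion claim, first note that rational injectivity of $\Phi_{p,2p+1}$ is the same as rational injectivity of $\overline{\Phi}_{p,2p+1}$. Indeed, $H_{2p+1}(X,\mathbb{Z})\to H_{2p+1}(X,\mathbb{Z})_{\mathrm{free}}$ becomes an isomorphism after tensoring with $\mathbb{Q}$. Since $\mathbb{Q}$ is flat, $(\ker\overline{\Phi}_{p,2p+1})\otimes\mathbb{Q}=\ker(\overline{\Phi}_{p,2p+1}\otimes\mathbb{Q})=0$. So $\ker\overline{\Phi}_{p,2p+1}$ is torsion, and this holds with no finite generation hypothesis on $L_pH_{2p+1}(X)$. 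Consequently $K_p^{\mathrm{per}}(X)=0$, and $\rho_W:\Lambda_p(X)\to L_W(X)$ is an isomorphism.

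Next I will upgrade this to a statement about groups. In the topological splitting $Q_p(X)\cong J_W^{2r-1}(X)\times\mathcal{T}_p(X)$, the factor $\mathcal{T}_p(X)$ is a weak product of circles whose lattice is $K_p^{\mathrm{per}}(X)$, by Lemma~\ref{lem:tori} applied in the proof of Theorem~\ref{thm:walkerquotient}. A weak product of circles with zero fundamental group has empty index set, so $\mathcal{T}_p(X)=0$. Thus $\bar\beta_W$ is a topological group isomorphism. Equivalently, the weak product is determined by its lattice: $\bar\beta_W$ is a continuous bijection, $Q_p(X)=\mathcal{A}_d$ for large $d$ by stabilization, and a compact-to-Hausdorff bijection is a homeomorphism.

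The remaining claims follow directly. Since $\beta_W=\bar\beta_Wq_p$ with $\bar\beta_W$ injective, $\ker\beta_W=\ker q_p=\operatorname{CR}_p(X)$. The exact sequence \eqref{eq:exact}, combined with \eqref{eq:walkerperiod} and the injectivity of $\iota_W\rho_W$, gives $\pi_1\operatorname{CR}_p(X)=\ker\kappa_p=\ker\overline{\Phi}_{p,2p+1}$, which is torsion by the paragraph above. Abstractly, $\mathcal{T}_p(X)\cong\ker\psi_W/D_p(X)$, so $D_p(X)=\ker\psi_W$. Divisibility is then Suzuki's theorem as cited in \cite[Theorem 2.3]{ACMV22}.

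I expect no serious obstacle. The only delicate point is the flatness step: it must be applied to the possibly non-finitely-generated group $L_pH_{2p+1}(X)$, and one must use the torsion freeness of $K_p^{\mathrm{per}}$ coming from $\Lambda_p$, not from $L_pH_{2p+1}$.
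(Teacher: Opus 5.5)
Your proposal is correct and follows the paper's proof. Rational injectivity makes $\ker\overline{\Phi}_{p,2p+1}$ torsion, so the free residual lattice $K_p^{\mathrm{per}}(X)$ in \eqref{eq:residuallattice} vanishes, and the splitting \eqref{eq:walkersplitting}, the exact sequence \eqref{eq:exact}, and Suzuki's divisibility theorem then give the remaining assertions. Your flatness check for the possibly non-finitely-generated $L_pH_{2p+1}(X)$ and your identification $\ker\kappa_p=\ker\overline{\Phi}_{p,2p+1}$ via injectivity of $\iota_W\rho_W$ spell out steps the paper leaves implicit.
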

\begin{proof}
Rational injectivity makes $\ker\overline{\Phi}_{p,2p+1}$ a torsion group. Its quotient $K_p^{\mathrm{per}}(X)$ in \eqref{eq:residuallattice} is free abelian, hence zero. The splitting \eqref{eq:walkersplitting} then gives the asserted isomorphisms and equality of cycle kernels. The fundamental group assertion follows from \eqref{eq:exact}. Suzuki's divisibility theorem \cite[Theorem 2.3]{ACMV22} gives divisibility of $D_p(X)$.

\end{proof}
\subsection{Classical Abel--Jacobi kernels}\label{sec:obstruction}
\begin{proposition}\label{prop:strictAJ}
There is a smooth projective fourfold $Y$ with
\[
 \operatorname{CR}_1(Y)\subsetneq\ker\bigl(G_1(Y)\xrightarrow{\operatorname{AJ}}J_a^5(Y)\bigr).
\]
\end{proposition}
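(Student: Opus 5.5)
We realize the strict inclusion on the fourfold $Y=X_{1,1}=E\times Y_0$ of Theorem~\ref{thm:infiniteexistence}, where $Y_0\subset\mathbb{P}^4$ is the generic quintic chosen there and $E$ is an elliptic curve; here $\dim Y=4$ and $p=1$. By Corollary~\ref{cor:walkerkernel} there is a homeomorphism $\ker\operatorname{AJ}\cong\operatorname{CR}_1(Y)\times\mathcal{T}_1(Y)\times F$. So it suffices to show that $\mathcal{T}_1(Y)=\ker\bar\beta_W\neq0$, and by \eqref{eq:walkersplitting} and \eqref{eq:residuallattice} this is equivalent to $K_1^{\mathrm{per}}(Y)\neq0$. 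Concretely, the goal is a loop class in $L_1H_3(Y)_{\mathrm{hom}}$ whose image under $\kappa_1$ is nonzero in $\Lambda_1(Y)$; any such class lies in $\ker\rho_W$ by \eqref{eq:walkerperiod}.

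\emph{Step 1: build the torus.} Take a homologically trivial Clemens one-cycle $z$ on $Y_0$ with nonzero Griffiths class. The one-cycles $z\times([t]-[0])$, $t\in E$, form a signed curve family, and they are algebraically trivial because $[t]-[0]$ is. By Proposition~\ref{prop:curvegeneration} this family induces a continuous homomorphism $\alpha_z:E\to Q_1(Y)$, and the generating loops of $H_1(E,\mathbb{Z})$ give classes $\kappa_1(\gamma)$ in $\Lambda_1(Y)$.

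\emph{Step 2: show the torus lies in the Walker kernel.} We check that $\bar\beta_W\alpha_z=0$, or at least that the lattice composite $\rho_W\kappa_1(\gamma)$ vanishes. By \eqref{eq:walkerperiod} this composite equals $\overline\Phi_{1,3}(\gamma)$. Since $z$ is homologically trivial, the Lawson trace of the loop is the product of a loop in $E$ with a null-homologous one-cycle, so it is zero in $H_3(Y,\mathbb{Z})$. Hence $\alpha_z$ maps $E$ into $\mathcal{T}_1(Y)$ up to a finite group, and its image lies in $\ker\operatorname{AJ}$.

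\emph{Step 3: show the torus is nontrivial (the main obstacle).} It remains to prove $\alpha_z\neq0$, that is, $\kappa_1(\gamma)\neq0$. Nonvanishing of $\gamma$ in $L_1H_3$, which Theorem~\ref{thm:infiniteexistence} supplies, is not enough, because $\gamma$ might come from $\pi_1\operatorname{CR}_1(Y)$. I would try the projected Abel--Jacobi invariant of Section~\ref{sec:infinite} first. It sends $t\mapsto z\times([t]-[0])$ to the top-form projection of $\operatorname{AJ}_{Y_0}(z)\otimes t$, evaluated against $H^{1,0}(E)\otimes H^{3,0}(Y_0)$, which yields a continuous homomorphism on $G_1(Y)$ with values in a real torus (or a vector space modulo periods). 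One must check that it vanishes on $R_1(Y)$: rational equivalences give algebraic families over $\mathbb{P}^1$, whose projected periods are trivial. Continuity then forces vanishing on the closure $\operatorname{CR}_1(Y)$, so the invariant descends to $Q_1(Y)$. By Clemens's theorem the projection of $\operatorname{AJ}_{Y_0}(z)$ is nonzero, so the descended invariant is nonconstant on $\alpha_z(E)$; this shows $\alpha_z\neq0$ and hence $\mathcal{T}_1(Y)\neq0$. The delicate points are that the invariant really is continuous in the Chow topology on bounded stages and that it kills $\mathbb{P}^1$-families, since Lemma~\ref{lem:regularcontinuity} does not apply to a non-algebraic target. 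Both will be checked stagewise on the resolved Chow components, exactly as in the proof of that lemma.
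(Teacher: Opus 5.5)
Steps~1 and~2 are fine; indeed $\bar\beta_W\alpha_z=0$ exactly, since its lattice map vanishes. Step~3, however, is a genuine gap, and it is not just a matter of checking continuity on bounded stages. The invariant you describe does not exist as a homomorphism on $G_1(Y)$ that kills $R_1(Y)$. Its proposed value on $w_t=z\times([t]-[0])$ is $\tilde t\,b_z$ modulo $P_{Y_0}+\tau P_{Y_0}$, where $\tilde t\in\mathbb{C}$ lifts $t$, and this value depends on the lift. Suppose $t\neq0$ has order $N$ and $N\tilde t=a+b\tau$. Then $Nw_t\in R_1(Y)$, so any homomorphism vanishing on $R_1(Y)$ must take a value of order dividing $N$ at $w_t$. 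But $N\tilde t\,b_z=(a+b\tau)b_z$ is, up to sign, the loop value \eqref{eq:ellipticvalue}, which is nonzero for suitable $z$ by Proposition~\ref{prop:ellipticrepair}. The only well-defined invariant is the loop invariant $\operatorname{AJ}^{\mathrm{tr}}_{1,1}$ on $L_1H_3(Y)_{\mathrm{hom}}$. To deduce $\kappa_1(\gamma)\neq0$ from it, you would need it to vanish on $\pi_1\operatorname{CR}_1(Y)$, and continuity cannot give this. The target is $V_{Y_0}\cong\mathbb{C}$ modulo a finitely generated period group whose rank, for a very general quintic, is far above $2$. That group is therefore not discrete, so the quotient is not Hausdorff, and vanishing on the dense subgroup $R_1(Y)$ does not pass to its closure. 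In addition, density says nothing about loops in the closure (Remark~\ref{rem:density}). This is exactly why the paper records that the generalized Abel--Jacobi construction gives no factorization through $\Lambda_p(X)$. It is also why Proposition~\ref{prop:infiniteone} leaves open, on your very fourfold $E\times Y_0=X_{1,1}$, whether these Lawson classes lie in $\pi_1\operatorname{CR}_1$ or survive in $\Lambda_1$.

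What Step~3 actually requires is a statement about torsion cycles, which your argument never touches. Since $\alpha_z$ is a continuous homomorphism into the Hausdorff group $Q_1(Y)$ and torsion is dense in $E$, one has $\alpha_z\neq0$ if and only if some $w_t$ with $t\in E_{\mathrm{tors}}$ lies outside $\operatorname{CR}_1(Y)$. The Chow classes of these $w_t$ are torsion with zero Walker image. By Theorem~\ref{thm:primarycriterion}(ii), $\mathcal{T}_1(Y)\neq0$ forces $(\ker\psi_W)[\ell]\neq0$ for every prime $\ell$, and the relevant classes must also avoid $D_1(Y)$. The paper explicitly leaves the residual lattice undetermined (Remark~\ref{rem:remaining}). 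The other factor of your decomposition does not help on this fourfold either. For a very general quintic, $H_3(Y,\mathbb{Z})$ is torsion free and the image of $\Phi_{1,3}$ is the saturated K\"unneth summand $H_1(E,\mathbb{Z})\otimes H_2(Y_0,\mathbb{Z})$, so $F=0$ by \eqref{eq:saturationcomponents}. The paper's proof uses precisely that factor, but on a different fourfold: the Ottem--Suzuki variety $T\times E$, with $T$ a pencil of Enriques surfaces. There the isogeny $\alpha:J_W^5\to J_a^5$ has nonzero kernel. Choose $0\neq\vartheta\in\ker\alpha$; surjectivity of $\psi_W$ gives an algebraically trivial cycle $u$ with $\beta_W(u)=\vartheta$. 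Then $\operatorname{AJ}(u)=0$, while $u\notin\operatorname{CR}_1\subset\ker\beta_W$ by \eqref{eq:walkerkernels}. No transcendental invariant and no control of $\mathcal{T}_1$ is needed.
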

\begin{proof}
Ottem and Suzuki construct a smooth projective threefold $T$ which is a pencil of Enriques surfaces and an elliptic curve $E$ such that, for $Y=T\times E$, the isogeny $\alpha:J_W^5(Y)\to J_a^5(Y)$ has nonzero kernel \cite[Corollary 4.2]{OS}. Choose $0\neq t\in\ker\alpha$. Surjectivity of $\psi_W$ gives an algebraically trivial cycle $z$ with $\beta_W(z)=t$. Then $\operatorname{AJ}(z)=0$, whereas every element of $\operatorname{CR}_1(Y)$ has Walker image zero by \eqref{eq:walkerkernels}. Thus $z\notin\operatorname{CR}_1(Y)$.
\end{proof}

\begin{corollary}\label{cor:disconnectedexample}
In this fourfold example, the cycle level classical Abel--Jacobi kernel is disconnected, and
\[
 \operatorname{AJ}_*:L_1H_3(Y)\longrightarrow H_1(J_a^5(Y),\mathbb{Z})
\]
is not surjective. Equivalently, the image of the integral cycle map in $H_3(Y,\mathbb{Z})_{\mathrm{free}}$ is not saturated.
\end{corollary}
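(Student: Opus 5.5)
The corollary should follow by feeding the nonzero group $F=\ker(\alpha:J_W^5(Y)\to J_a^5(Y))$ from Proposition~\ref{prop:strictAJ} into Corollary~\ref{cor:walkerkernel}. That corollary holds for every smooth projective $X$ and $0\leq p<n$, so we apply it with $X=Y$, $n=4$, $p=1$ and $r=3$. Then $2r-1=5$ and $2p+1=3$, so the relevant Jacobians are $J_W^5(Y)$ and $J_a^5(Y)$ and the lattice lives in $H_3(Y,\mathbb{Z})$. Ottem--Suzuki \cite[Corollary 4.2]{OS}, as recalled in the proof of Proposition~\ref{prop:strictAJ}, gives that $\alpha$ is an isogeny with $F\neq0$.

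\textbf{Disconnectedness.} By \eqref{eq:AJcomponents}, $\pi_0\ker\bigl(\operatorname{AJ}:G_1(Y)\to J_a^5(Y)\bigr)\cong F$. Since $F\neq0$, the kernel has at least two path components. Its components are the translates $\beta_W^{-1}(t)$ for $t\in F$, which are open and closed, so these are also its connected components. Hence the kernel is disconnected as a topological space, not merely non-path-connected. An equivalent route uses the homeomorphism \eqref{eq:kernelproducts}, $\ker\operatorname{AJ}\cong\operatorname{CR}_1(Y)\times\mathcal{T}_1(Y)\times F$ with $F$ discrete. The first two factors are connected: the closure by Theorem~\ref{thm:main}, and $\mathcal{T}_1(Y)$ as a weak product of circles.

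\textbf{Non-surjectivity.} The second part of \eqref{eq:AJcomponents} gives $\operatorname{coker}\bigl(\operatorname{AJ}_*:L_1H_3(Y)\to H_1(J_a^5(Y),\mathbb{Z})\bigr)\cong F\neq0$, so $\operatorname{AJ}_*$ is not surjective. This rests on the factorization $\operatorname{AJ}_*=\alpha_*\rho_W\kappa_1$. Here $\rho_W\kappa_1$ is surjective by Theorem~\ref{thm:walkerquotient}, and $\alpha_*$ has cokernel $F$ by the isogeny lattice sequence.

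\textbf{Non-saturation.} By \eqref{eq:saturationcomponents}, $F\cong\overline N^{\mathrm{sat}}/\overline N$, where $\overline N$ is the image of $\operatorname{im}\Phi_{1,3}$ in $H_3(Y,\mathbb{Z})_{\mathrm{free}}$. So $F\neq0$ says exactly that $\overline N$ is not saturated. This also gives the stated equivalence: each of the three assertions is equivalent to $F\neq0$, hence to the others.

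The only point needing care is this final identification. The statement must refer to the image in the torsion-free quotient $H_3(Y,\mathbb{Z})_{\mathrm{free}}$, not to $(H/N)_{\mathrm{tors}}$. Corollary~\ref{cor:walkerkernel} stresses this distinction, since $H_3(Y,\mathbb{Z})$ may have torsion for this Enriques-pencil product. Because \eqref{eq:saturationcomponents} is already stated in the torsion-free form, I expect no genuine obstacle beyond matching the indices $p=1$, $r=3$.
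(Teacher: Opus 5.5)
Your proposal is correct and follows the paper's proof: both apply Corollary~\ref{cor:walkerkernel}, including \eqref{eq:AJcomponents} and \eqref{eq:saturationcomponents}, with $p=1$, $r=3$ to the nonzero isogeny kernel $F$ from Proposition~\ref{prop:strictAJ}. That corollary identifies the component group, the lattice cokernel of $\operatorname{AJ}_*$, and $\overline N^{\mathrm{sat}}/\overline N$ each with $F$. Your added remarks, that the components are open and closed and that $\ker\operatorname{AJ}\cong\operatorname{CR}_1(Y)\times\mathcal{T}_1(Y)\times F$, only make explicit what the paper leaves implicit.
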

\begin{proof}
Corollary~\ref{cor:walkerkernel}, including \eqref{eq:saturationcomponents}, identifies both the component group and the lattice cokernel with the nonzero finite kernel of the isogeny in Proposition~\ref{prop:strictAJ}.
\end{proof}

This is an integral obstruction: replacing the target by an isogenous torus can change the cycle kernel and its components. The local and global section arguments do not discard the finite isogeny kernel.

\subsection{Finite parameterization and torsion criteria}
\begin{theorem}\label{thm:finiterank}
For fixed connected projective $X$, allowing singularities, and $0\leq p<\dim X$, the following conditions are equivalent:
\begin{enumerate}
\item $\operatorname{rank}\Lambda_p(X)<\infty$;
\item $Q_p(X)$ is compact;
\item $Q_p(X)$ has finite covering dimension;
\item $q_p(K_d\cap G_p(X))=Q_p(X)$ for some $d$;
\item one signed projective algebraic family over a smooth connected projective curve $C$ induces a surjection $J(C)\to Q_p(X)$.
\end{enumerate}
Under these conditions $Q_p(X)$ is a compact real torus of dimension $\operatorname{rank}\Lambda_p(X)$, and the rank in (v) is at most $2g(C)$.
\end{theorem}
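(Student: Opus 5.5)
The plan is to prove a cycle of implications, (i)$\Rightarrow$(ii)$\Rightarrow$(iii)$\Rightarrow$(i), and then attach (iv) and (v) to compactness. Throughout I use the structure from Theorem~\ref{thm:main}(i) and Lemma~\ref{lem:tori}: $Q_p(X)\cong\bigoplus_{j\in I}\mathbb{T}$ with the final topology of the finite coordinate tori, the tori $\mathcal{A}_d$ are nested, and $\operatorname{rank}\Lambda_p(X)=|I|$.

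For (i)$\Rightarrow$(ii): if $|I|<\infty$, the weak product is the finite torus $\mathbb{T}^{|I|}$, which is compact. For (ii)$\Rightarrow$(iii): a compact subset of the countable CW complex $Q_p(X)$ lies in a finite subcomplex, hence in some $\mathcal{A}_d$. So compactness forces $Q_p(X)=\mathcal{A}_d$, which has finite covering dimension. For (iii)$\Rightarrow$(i): if $I$ were infinite, $Q_p(X)$ would contain the closed subtori $\mathbb{T}^k$ for every $k$. Covering dimension is monotone on closed subsets of a normal (paracompact) space, so $\dim Q_p(X)\geq k$ for all $k$, a contradiction.

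For (ii)$\Leftrightarrow$(iv): if $Q_p(X)$ is compact, then $Q_p(X)=\mathcal{A}_d$ as above. Proposition~\ref{prop:exhaustion} gives $\mathcal{A}_d\subset q_p(K_{e(d)}\cap G_p(X))$, which yields (iv) with index $e(d)$. Conversely, $q_p(K_d\cap G_p(X))$ is the continuous image of a compact set, so (iv) gives compactness.

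For (v)$\Rightarrow$(ii): $J(C)$ is compact, so its image is compact. The same argument gives the bound in the final clause. The surjection $J(C)\to Q_p(X)$ is a surjection of finite dimensional tori, so $\dim_{\mathbb{R}}Q_p(X)\le 2g(C)$, and $\dim_{\mathbb{R}}Q_p(X)=\operatorname{rank}\Lambda_p(X)$ by Lemma~\ref{lem:tori}.

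The main obstacle is (ii)$\Rightarrow$(v), namely producing a \emph{single} curve family. From $Q_p(X)=\mathcal{A}_d$, the construction in Proposition~\ref{prop:exhaustion} gives a surjection $B_d=\prod_i\operatorname{Alb}(T_i)\to\mathcal{A}_d$. Each factor comes from a Chow family $f_i$ over a smooth projective $T_i$, or from a curve family in \eqref{eq:curvepresentation} (with signs absorbed by inversion), together with constant translation terms that are algebraically trivial. I would proceed in three steps.
\begin{enumerate}
\item For each $i$, choose a smooth ample complete intersection curve $D_i\subset T_i$ through the base point with $J(D_i)\to\operatorname{Alb}(T_i)$ surjective. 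This reduces every factor to a curve family, so $\prod_iJ(D_i)\to Q_p(X)$ is surjective.
\item Replace the finitely many curves by one curve $C$ whose Jacobian surjects onto $\prod_iJ(D_i)$. Take $C$ to be a smooth ample complete intersection curve in $D_1\times\cdots\times D_s$. By the Lefschetz theorem, $J(C)\to\operatorname{Alb}(\prod_iD_i)=\prod_iJ(D_i)$ is surjective.
\item Form the signed family $\sum_i\epsilon_i\,f_i\circ\mathrm{pr}_i$ over $C$, composed with the maps $D_i\to T_i$. This is a finite signed sum of Chow morphisms over $C$. By Lemma~\ref{lem:familyfactor} and the commutativity $q_p\Gamma=\alpha a$, its induced map $J(C)\to Q_p(X)$ is the composite $J(C)\to\prod_iJ(D_i)\to B_d\to Q_p(X)$, which is surjective.
\end{enumerate}
Two points need checking here: the constant translation terms, and the fact that a signed sum of pullback families induces the sum of the individual homomorphisms. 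Both follow from additivity of $\Gamma$ and the uniqueness in \eqref{eq:familyfactor}.

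Finally, the closing assertion that $Q_p(X)$ is a compact real torus of dimension $\operatorname{rank}\Lambda_p(X)$ is the identification $Q_p(X)=\mathcal{A}_d\cong\mathbb{T}^{|I|}$ obtained above.
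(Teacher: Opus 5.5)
Your proposal is correct and takes essentially the same route as the paper: the weak-product/CW description for (i)--(iii), the bounded presentation $e(d)$ of Proposition~\ref{prop:exhaustion} for (iv), and, for (v), a smooth ample complete intersection curve in the product of the parameter curves, with the signed families pulled back along the projections and summed. The paper also states two degenerate cases that you leave implicit. With a single factor, take $C=D_1$. When $Q_p(X)=0$, take $C=\mathbb{P}^1$ with the zero family.
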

\begin{proof}
The weak product description gives equivalence of (i)--(iii): finite rank gives a compact finite dimensional torus, whereas infinite rank gives closed tori of arbitrarily large dimension and cannot be compact. Indeed, compact subsets lie in finite stages of its CW structure. Under (ii), some $\mathcal{A}_d$ is all of $Q_p$; its bounded presentation in Proposition~\ref{prop:exhaustion} gives (iv). Conversely, the compact image in (iv) gives (ii).

When $Q_p=\mathcal{A}_d$, finitely many curve Jacobians cover its parameter Albaneses. Choose a smooth ample complete intersection curve in the product of those parameter curves; its Jacobian surjects onto the product Jacobian. Pull the signed families back along the projections and sum them, giving (v). For one factor take that curve; for the zero quotient take $C=\mathbb{P}^1$ and the zero family. Condition (v) implies compactness, and the dimension of its source gives the genus bound.
\end{proof}
This finite-parameterization property concerns the residual group $A_p/D_p$. It neither bounds all discrete algebraic-equivalence components at once nor determines the abstract group $D_p$ that was removed.

\begin{lemma}\label{lem:primarysurjection}
For any connected projective $X$, allowing singularities, $0\leq p<\dim X$, and every prime $\ell$, there is an exact sequence of abstract groups
\begin{equation}\label{eq:primaryexact}
 0\longrightarrow D_p(X)[\ell^\infty]\longrightarrow A_p(X)[\ell^\infty]
 \longrightarrow Q_p(X)[\ell^\infty]\longrightarrow0.
\end{equation}
Moreover,
\[
 Q_p(X)[\ell^\infty]\cong\bigoplus_{j\in I}\mathbb{Q}_\ell/\mathbb{Z}_\ell,
 \qquad |I|=\operatorname{rank}\Lambda_p(X).
\]
The direct sum has finite support even when $I$ is infinite.
\end{lemma}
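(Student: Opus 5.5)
The plan is to start from the abstract short exact sequence of groups
\[
 0\longrightarrow D_p(X)\longrightarrow A_p(X)\longrightarrow Q_p(X)\longrightarrow 0,
\]
which comes from $Q_p(X)=A_p(X)/D_p(X)$. Applying the left exact functor $B\mapsto B[\ell^\infty]=\operatorname*{colim}_a\operatorname{Hom}(\mathbb{Z}/\ell^a,B)$ gives exactness of \eqref{eq:primaryexact} at the first two places automatically. The only content is surjectivity on the right.

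\textbf{Surjectivity.} The connecting map lands in $\operatorname*{colim}_a D_p(X)/\ell^a D_p(X)$, so it suffices to show that $D_p(X)$ is $\ell$-divisible. I would not rely on Suzuki's theorem, since $X$ may be singular. Instead I would argue directly from the curve presentation. Let $t\in Q_p(X)$ with $\ell^a t=0$, and choose $z\in G_p(X)$ with $q_p(z)=t$. Then $\ell^a z\in\operatorname{CR}_p(X)$, and the task is to find $z'\in z+\operatorname{CR}_p(X)$ with $\ell^a z'\in R_p(X)$.

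A cleaner route is to lift $t$ through a torus surjection. Proposition~\ref{prop:exhaustion} gives, for some $d$, a surjection $b_d:B_d\to\mathcal{A}_d\ni t$ from a product of parameter Albaneses. Since $\ker b_d$ is a closed subgroup of a compact torus whose identity component is divisible, the quotient map $B_d[\ell^\infty]\to\mathcal{A}_d[\ell^\infty]$ is surjective. Concretely, lift $t$ to $x\in B_d$. Then $\ell^a x\in\ker b_d$, and modulo the identity component $(\ker b_d)^0$ we can correct $x$ by an element $y$ with $\ell^a y=\ell^a x$. If the finite group $\pi_0\ker b_d$ has nontrivial $\ell$-part, I would enlarge the lift by first using divisibility of $(\ker b_d)^0$ and then passing to $\ell$-power torsion in the full kernel. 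Since $B_d$ is a torus, its $\ell$-primary torsion surjects onto the $\ell$-primary torsion of any quotient torus.

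Having a torsion point $x\in B_d$ of order $\ell^c$ over $t$, I would lift $x$ to torsion divisor classes on the parameter Jacobians. For each Jacobian $J(D_i)\to\operatorname{Alb}(T_i)$, the $\ell^\infty$-torsion lifts by the same torus argument. These classes are represented by degree zero cycles $u_i$ with $\ell^c u_i\in R_0(D_i)$. Pushing forward via $\Gamma_{f_i}$, and using Lemma~\ref{lem:familyfactor}, which says that $\Gamma_f$ preserves rational equivalence, yields a cycle $w$ with $q_p(w)=t$ and $\ell^c w\in R_p(X)$. Then $[w]\in A_p(X)[\ell^\infty]$ maps to $t$.

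\textbf{The explicit form of $Q_p(X)[\ell^\infty]$.} Using the weak product decomposition \eqref{eq:weakproduct}, $\ell^\infty$-torsion is computed coordinatewise on finite support, and $\mathbb{T}[\ell^\infty]=\mathbb{Q}_\ell/\mathbb{Z}_\ell$. The index set $I$ satisfies $|I|=\operatorname{rank}\Lambda_p(X)$ by Lemma~\ref{lem:tori}.

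\textbf{Main obstacle.} The delicate step is lifting torsion through torus surjections whose kernels may be disconnected. I expect the cleanest fix is to prove, for any surjection of compact tori $f:B\to A$, that $f(B[\ell^\infty])=A[\ell^\infty]$. This holds because $f$ lifts to a surjective linear map of universal covers carrying $\Lambda_B\otimes\mathbb{Z}[1/\ell]$ onto a subgroup of finite index in $\Lambda_A\otimes\mathbb{Z}[1/\ell]$ together with the rational span. More precisely, the lattice image has finite index $N$, and after adjusting by preimages, the failure is controlled by $N$-torsion. Any prime-to-$\ell$ part of $N$ is harmless, and the $\ell$-part is absorbed by allowing larger $\ell$-powers.
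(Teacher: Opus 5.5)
Your proposal is correct and is essentially the paper's argument: lift an $\ell$-primary torsion point of $\mathcal{A}_d$ to an $\ell$-primary torsion point of a finite product of Jacobians, allowing a larger order via the finite component group of the kernel, divisibility of its identity torus, and projection to the $\ell$-primary part; push the resulting divisor classes forward through the cycle families to get an $\ell$-primary class in $A_p(X)$; identify the kernel with $D_p(X)[\ell^\infty]$ by left exactness; and read off $Q_p(X)[\ell^\infty]$ from the weak product of circles. The opening reduction to $\ell$-divisibility of $D_p(X)$ and the same-exponent task with $\ell^a$ are never used and can be dropped. What you actually use is a possibly larger exponent $\ell^c$, which may grow again when you lift from $\operatorname{Alb}(T_i)$ to $J(D_i)$; this is harmless.
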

\begin{proof}
A torsion point of $Q_p$ lies in an $\mathcal{A}_d$ covered by a finite product $B$ of curve Jacobians. A surjection $B\to \mathcal{A}_d$ lifts torsion to torsion, possibly of larger order. To see this, let $t$ have order $N$ and choose a lift $b$. The element $Nb$ lies in the closed kernel. If $M$ kills the finite component group of that kernel, then $MNb$ is in its identity torus. Choose $y$ in that torus with $MNy=MNb$. Then $b-y$ is a torsion lift. If $t$ is $\ell$-primary, take the $\ell$-primary component of this lift. The corresponding curve Jacobian classes map to an $\ell$-primary class in $A_p(X)$ lifting $t$. The kernel is exactly $D_p[\ell^\infty]$. Taking torsion in the weak product of circles proves the final assertion.
\end{proof}
The exponent $\ell^\infty$ in this lemma cannot be replaced without further argument by a prescribed exponent $\ell^a$.

\begin{theorem}\label{thm:primarycriterion}
Let $X$ be connected projective, $0\leq p<\dim X$, and let $\ell$ be a prime. Part (i) permits singular $X$; in (ii)--(iii), assume $X$ smooth.
\begin{enumerate}
\item If $a_\ell=\dim_{\mathbb{F}_\ell}A_p(X)[\ell]<\infty$, then $\operatorname{rank}\Lambda_p(X)\leq a_\ell$.
\item If $\psi_W$ is injective on $A_p(X)[\ell]$, then
\[
 \operatorname{CR}_p(X)=\ker\beta_W,\qquad Q_p(X)\cong J_W^{2r-1}(X)
\]
via the induced Walker map.
\item The same conclusion holds if the kernel of the classical Abel--Jacobi map on $A_p(X)[\ell^\infty]$ has bounded exponent; in particular, it holds if this kernel is finite.
\end{enumerate}
\end{theorem}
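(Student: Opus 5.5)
Part (i) follows from Lemma~\ref{lem:primarysurjection}. The exact sequence \eqref{eq:primaryexact} restricted to $\ell$-torsion gives a surjection $A_p(X)[\ell]\to Q_p(X)[\ell]$. This uses only that $Q_p(X)[\ell^\infty]$ is a direct sum of Pr\"ufer groups, since $\ell$-torsion of $\mathbb{Q}_\ell/\mathbb{Z}_\ell$ is $\mathbb{Z}/\ell$, but surjectivity on $[\ell]$ must be checked. Take $t\in Q_p[\ell]$. The lemma's proof lifts it to an $\ell$-primary class $x\in A_p[\ell^\infty]$, say of order $\ell^a$. Since $Q_p[\ell^\infty]$ is divisible, we may instead lift $t$ to $\ell^{a-1}$-times some element. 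Concretely, choose $s\in Q_p[\ell^{a}]$ with $\ell^{a-1}s=t$, lift $s$ to $y\in A_p[\ell^\infty]$, and adjust. I expect that the cleanest route is to apply the lemma's torsion-lifting argument directly with the order bound tracked. In $\mathcal{A}_d$ a point of order $\ell$ lifts to $b$ in the torus $B$ with $\ell b$ in the kernel. The lifted class may have order larger than $\ell$, so one needs a more careful count. I would therefore argue by counting. If $|I|>a_\ell$, choose a finite coordinate subtorus of dimension $k>a_\ell$. Its $\ell^j$-torsion has order $\ell^{jk}$ and lifts into $A_p[\ell^\infty]$ with kernel in $D_p$. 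A group with $\dim_{\mathbb{F}_\ell}A[\ell]=a_\ell$ has $|A[\ell^j]|\le\ell^{ja_\ell}$, because $A[\ell^j]$ is filtered by pieces injecting into $A[\ell]$ via multiplication by powers of $\ell$. The surjection $A_p[\ell^\infty]\to Q_p[\ell^\infty]$ restricts to a surjection on $\ell^j$-torsion only up to the order-increase issue. Instead, compare the images: the image of $A_p[\ell^{j+c}]$ contains $Q_p[\ell^{j}]$ of that subtorus for a fixed constant $c$ coming from the finite component group of the kernel of $B\to\mathcal{A}_d$. This gives $\ell^{jk}\le\ell^{(j+c)a_\ell}$ for all $j$, which forces $k\le a_\ell$.

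For (ii) I would apply Theorem~\ref{thm:walkerquotient}. It suffices to show $\mathcal{T}_p(X)=0$, since then $\bar\beta_W$ is a topological isomorphism and $\operatorname{CR}_p=q_p^{-1}(0)=\ker\beta_W$. The group $\mathcal{T}_p(X)$ is a weak product of circles, so if nonzero it contains an element $t$ of order $\ell$. By the abstract identification $\mathcal{T}_p\cong\ker\psi_W/D_p$, together with \eqref{eq:primaryexact} applied inside $\ker\psi_W$, one needs a lift of $t$ to a class $x\in\ker\psi_W$ with $\ell x=0$. The main obstacle is again lifting with exact order $\ell$. Here I would use Suzuki's divisibility of $\ker\psi_W$ (\cite[Theorem 2.3]{ACMV22}). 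Lift $t$ to some torsion $x\in\ker\psi_W[\ell^\infty]$ of order $\ell^a$. We have $\ell x\in D_p$ modulo the lift, meaning $\ell x=d$ with $d\in D_p[\ell^\infty]$. Divisibility of $D_p$ would allow subtracting $d'$ with $\ell d'=d$, but $D_p$'s divisibility is only known under the conclusion. So I would instead argue as follows. Hypothesis (ii) gives that $\ker\psi_W$ has no $\ell$-torsion. A divisible group with no $\ell$-torsion is uniquely $\ell$-divisible, so $\ker\psi_W[\ell^\infty]=0$. Then \eqref{eq:primaryexact}, applied to the subgroup $\ker\psi_W$ (whose proof works since the torsion lift can be taken in the Walker kernel by lifting inside $\mathcal{T}_p$'s torus stages), gives $\mathcal{T}_p[\ell^\infty]=0$, hence $\mathcal{T}_p=0$.

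For (iii), write $\operatorname{AJ}=\alpha\psi_W$. The kernel of $\psi_W$ on $A_p[\ell^\infty]$ is contained in the kernel of $\operatorname{AJ}$ there, so it has bounded exponent. It is also the $\ell^\infty$-torsion of the divisible group $\ker\psi_W$, hence itself divisible. A divisible group of bounded exponent is zero, so $\psi_W$ is injective on $A_p[\ell]$ and (ii) applies. The finite case is a special instance.
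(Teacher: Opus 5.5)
Your proposal is correct. Part (iii) is the paper's argument, but (i) and (ii) take different routes.

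\textbf{Part (i).} The paper uses that $A_p(X)$ is divisible, being a sum of images of curve Jacobians. Hence $A_p(X)[\ell^\infty]\cong(\mathbb{Q}_\ell/\mathbb{Z}_\ell)^{a_\ell}$. The surjection of Lemma~\ref{lem:primarysurjection} onto any $t$ Pr\"ufer factors of $Q_p(X)[\ell^\infty]$ is then a matrix over $\mathbb{Z}_\ell$ of row rank $t$, so $t\leq a_\ell$.

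Once you delete the abandoned attempts at surjectivity on $\ell$-torsion, your counting argument replaces this. It combines two facts. First, $|A[\ell^j]|\leq\ell^{ja_\ell}$. Second, for a stage $\mathcal{A}_d$ of dimension $k>a_\ell$, $\mathcal{A}_d[\ell^j]$ lies in the image of $A_p(X)[\ell^{j+c}]$, where $c$ is the $\ell$-adic valuation of the order of the component group of $\ker(B\to\mathcal{A}_d)$. Together these give $jk\leq(j+c)a_\ell$ for all $j$, which is impossible. This avoids divisibility and the classification of divisible $\ell$-groups. The cost is that you need the uniform constant $c$ from inside the proof of Lemma~\ref{lem:primarysurjection}, not just its statement.

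\textbf{Part (ii).} The paper uses Suzuki's divisibility of $\ker\psi_W$ to obtain $a_\ell=\operatorname{rank}L_W(X)$. It then squeezes $\operatorname{rank}\Lambda_p(X)$ between part (i) and the integral surjection $\rho_W$. This kills $K_p^{\mathrm{per}}(X)$ and hence $\mathcal{T}_p(X)$.

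You kill $\mathcal{T}_p(X)$ directly. Since $\psi_W$ factors through $Q_p(X)$ as $\bar\beta_W$, every lift in $A_p(X)$ of a point of $\mathcal{T}_p(X)$ lies in $\ker\psi_W$. So the lifts of Lemma~\ref{lem:primarysurjection} show that $(\ker\psi_W)[\ell^\infty]$ surjects onto $\mathcal{T}_p(X)[\ell^\infty]$. The source is zero because it has no element of order $\ell$, and a nonzero weak product of circles has $\ell$-torsion. This is shorter and uses neither part (i) nor Suzuki's theorem, which then enters only in (iii), as in the paper.

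Your remarks about unique $\ell$-divisibility and about lifting inside the torus stages of $\mathcal{T}_p(X)$ are unnecessary. Membership of the lift in $\ker\psi_W$ is automatic.
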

\begin{proof}
The curve presentation makes $A_p(X)$ a sum of images of complex points of curve Jacobians, so it is divisible, also for singular $X$; compare \cite[Example 19.1.2]{Fulton}. If $a_\ell<\infty$, the classification of divisible torsion groups gives $A_p[\ell^\infty]\cong(\mathbb{Q}_\ell/\mathbb{Z}_\ell)^{a_\ell}$. Project the surjection in Lemma~\ref{lem:primarysurjection} onto any $t$ coordinate factors of the target. A homomorphism between finite sums of Pr\"ufer $\ell$-groups is a matrix over $\mathbb{Z}_\ell$, since $\operatorname{End}(\mathbb{Q}_\ell/\mathbb{Z}_\ell)=\mathbb{Z}_\ell$. Surjectivity forces row rank $t$ over $\mathbb{Q}_\ell$, so $t\leq a_\ell$. The number of possible coordinate factors is bounded by $a_\ell$, proving (i).

For smooth $X$, the group $K=\ker\psi_W$ is divisible by Suzuki's theorem \cite[Theorem 2.3]{ACMV22}. Consequently
\[
 0\to K[\ell]\to A_p(X)[\ell]\to J_W^{2r-1}(X)[\ell]\to0
\]
is exact. Under (ii), $K[\ell]=0$, so $a_\ell=\operatorname{rank} L_W(X)$. Part (i) and the integral surjection $\rho_W$ imply $\operatorname{rank}\Lambda_p=\operatorname{rank} L_W$. The residual free lattice in Theorem~\ref{thm:walkerquotient} is therefore zero, and so is its weak product torus $\mathcal{T}_p$. This proves (ii). In (iii), the divisible group $K[\ell^\infty]$ lies in a bounded-exponent group, hence is zero. Part (ii) applies.
\end{proof}
For smooth $X$, an infinite rank $\Lambda_p(X)$ would force both $A_p(X)[\ell]$ and $(\ker\psi_W)[\ell]$ to be infinite for every prime $\ell$. For singular $X$, the former conclusion still follows. These are necessary conditions, not converses.

The primary torsion sequence above does not imply surjectivity on every fixed order. It also does not determine the uniquely divisible part of $D_p(X)$.

\section{Coniveau ranks in a family of abelian threefolds}\label{sec:variation}
\subsection{The rank calculation}\label{subsec:abelianranks}
Write $\operatorname{End}^0(A)=\operatorname{End}(A)\otimes\mathbb{Q}$ and $\varrho(A)=\operatorname{rank}\operatorname{NS}(A)$, where $\operatorname{NS}(A)$ is the N\'eron--Severi group. An elliptic curve is non-CM when its rational endomorphism algebra is $\mathbb{Q}$, and has complex multiplication (CM) when that algebra is an imaginary quadratic field.

The following computations express classical coniveau subspaces through the quotient lattice. We use the Hodge groups in \cite{MZ}; the product statement for pairwise nonisogenous elliptic curves is due to Imai \cite{Imai}, as recalled in \cite[Corollary 3.9]{MZ}.

\begin{proposition}\label{prop:abelianranks}
The following abelian threefolds have the indicated ranks:
\begin{center}
\begin{tabular}{@{}>{\raggedright\arraybackslash}p{0.67\linewidth}rr@{}}
\toprule
Abelian threefold $A$ & $\varrho(A)$ & $\operatorname{rank}\Lambda_1(A)$ \\
\midrule
Simple $A$ with $\operatorname{End}^0(A)=\mathbb{Q}$ & $1$ & $6$ \\
$E_1\times E_2\times E_3$, pairwise nonisogenous non-CM elliptic curves & $3$ & $12$ \\
$E^2\times F$, with $E,F$ nonisogenous non-CM elliptic curves & $4$ & $14$ \\
$E^3$, with $E$ non-CM & $6$ & $16$ \\
$E^3$, with $E$ having complex multiplication & $9$ & $18$ \\
\bottomrule
\end{tabular}
\end{center}
All have $h^{a,b}(A)=\binom3a\binom3b$, $b_3(A)=20$, and underlying differentiable manifold $(S^1)^6$. In these five cases,
\begin{equation}\label{eq:divisorconiveau}
 N^1H^3(A,\mathbb{Q})=\operatorname{im}\bigl(\operatorname{NS}(A)_{\mathbb{Q}}\otimes H^1(A,\mathbb{Q})\xrightarrow{\ \wedge\ }H^3(A,\mathbb{Q})\bigr).
\end{equation}
\end{proposition}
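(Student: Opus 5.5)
The plan is to reduce everything to a Hodge-theoretic computation on $H^3(A,\mathbb{Q})=\bigwedge^3H^1(A,\mathbb{Q})$. Here $n=3$ and $p=1=n-2$, so Corollary~\ref{cor:classical} gives $\Lambda_1(A)\cong H_1(J_a^3(A),\mathbb{Z})$. Hence $\operatorname{rank}\Lambda_1(A)=2\dim J_a^3(A)$. This equals $\dim_{\mathbb{Q}}N^1H^3(A,\mathbb{Q})$, since the Walker Jacobian is isogenous to $J_a^3$ and \eqref{eq:rankwalker} applies with vanishing residual lattice. The Hodge numbers $h^{a,b}=\binom3a\binom3b$, $b_3=\binom63=20$, and the diffeomorphism with $(S^1)^6$ are standard for complex tori. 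So it suffices to prove \eqref{eq:divisorconiveau} and compute the dimension of its right-hand side in the five cases.

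\emph{Lower bound.} For an ample (or any) divisor $D$ and $\alpha\in H^1(A,\mathbb{Q})$, the class $[D]\wedge\alpha$ is the Gysin image of $\alpha|_{\widetilde D}$ from a resolution of $D$. Therefore it vanishes on $A\setminus D$. Since $\operatorname{NS}(A)_{\mathbb{Q}}$ is spanned by such classes, the image $M=\operatorname{im}(\operatorname{NS}_{\mathbb{Q}}\otimes H^1\to H^3)$ lies in $N^1H^3(A,\mathbb{Q})$.

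\emph{Upper bound.} $N^1H^3$ is a rational sub-Hodge structure contained in $H^{2,1}\oplus H^{1,2}$. It therefore lies in the largest such sub-Hodge structure $H^3_{\le1}$, the level-one part. Let $V=H^1(A,\mathbb{Q})$. Sub-Hodge structures of $\bigwedge^3V$ are exactly the subrepresentations of the Hodge group $\mathrm{Hg}(A)$. Moreover $\operatorname{NS}_{\mathbb{Q}}=(\bigwedge^2V)^{\mathrm{Hg}}$ and $M$ is $\mathrm{Hg}$-stable. Hence the problem becomes one of representation theory: decompose $\bigwedge^3V$ under $\mathrm{Hg}(A)$, read off $H^3_{\le1}$ as the sum of the irreducible summands meeting neither $H^{3,0}$ nor $H^{0,3}$, and verify that $M$ already fills $H^3_{\le1}$. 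The two inclusions $M\subset N^1H^3\subset H^3_{\le1}$ then become equalities, proving \eqref{eq:divisorconiveau}. I will take $\mathrm{Hg}$ from \cite{MZ} and, for pairwise nonisogenous curves, from Imai \cite{Imai}.

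\emph{The five cases.}
\begin{itemize}
\item Simple with $\operatorname{End}^0=\mathbb{Q}$: here $\mathrm{Hg}=\mathrm{Sp}_6$ and $\bigwedge^3V=V\oplus\bigwedge^3_0V$ with dimensions $6+14$. The $14$-dimensional primitive part contains $H^{3,0}$, and $V=\theta\wedge V$, so the rank is $6$.
\item $E_1\times E_2\times E_3$: here $\mathrm{Hg}=\mathrm{SL}_2^3$ and $V=V_1\oplus V_2\oplus V_3$. Then $\bigwedge^3V=V_1\otimes V_2\otimes V_3\oplus\bigoplus_{i\ne j}\bigwedge^2V_j\otimes V_i$. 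The $8$-dimensional first summand contains $H^{3,0}$. The remaining summands have total dimension $12$ and are spanned by the three elliptic-curve divisor classes wedged with $H^1$, giving rank $12$.
\item $E^3$ with $E$ non-CM: here $\mathrm{Hg}=\mathrm{SL}_2$ and $V=W\otimes\mathbb{Q}^3$. By Schur--Weyl, $\bigwedge^3V=S^3W\oplus W^{\oplus 8}$. The $4$-dimensional $S^3W$ contains $H^{3,0}$, so the rank is $16$.
\item $E^2\times F$: the same analysis with $\mathrm{Hg}=\mathrm{SL}_2\times\mathrm{SL}_2$ and $V=(W\otimes\mathbb{Q}^2)\oplus U$ gives rank $14$.
\item $E^3$ with $E$ CM: $\mathrm{Hg}$ is a one-dimensional torus. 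The lines $H^{3,0}$ and $H^{0,3}$ span a rational $2$-dimensional summand, which leaves $18$.
\end{itemize}
In each case I will check that $\dim M$ equals the level-one dimension. Since $M\subset H^3_{\le1}$ and both are $\mathrm{Hg}$-stable, it suffices to see that $M$ meets every isotypic level-one summand with full multiplicity. This can be done by exhibiting enough independent products $[D]\wedge\alpha$, using $\varrho(A)=1,3,4,6,9$ respectively.

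The main obstacle is this last surjectivity check, showing $M=H^3_{\le1}$, in the multiplicity cases $E^2\times F$ and $E^3$. There the level-one isotypic components have multiplicity greater than one, and a dimension count alone does not suffice. I would first try an explicit basis. For $E^3$ non-CM, $\operatorname{NS}_{\mathbb{Q}}\cong\operatorname{Sym}^2\mathbb{Q}^3$, and the wedge map to $W\otimes S^{21}\mathbb{Q}^3$ is the $\mathrm{GL}_3$-equivariant projection $\operatorname{Sym}^2\mathbb{Q}^3\otimes\mathbb{Q}^3\to S^{21}\mathbb{Q}^3$, which is surjective. I would then treat the remaining multiplicity case $E^2\times F$ and the CM case by the analogous equivariant argument.
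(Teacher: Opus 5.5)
Your proposal is correct and follows essentially the paper's route. You reduce to $\dim_{\mathbb{Q}}N^1H^3(A,\mathbb{Q})$ through Corollary~\ref{cor:classical} and the Walker quotient, take the lower bound from Gysin images of divisor classes, and take the upper bound by discarding the unique multiplicity-one Hodge-group summand containing $H^{3,0}$, case by case with the Hodge groups of Moonen--Zarhin and Imai.

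The only genuine variation is your spanning argument for the non-CM cube. There $\mathrm{GL}_3$-equivariance, the Cauchy decomposition, and Schur's lemma (which needs only that the wedge map is nonzero) replace the paper's explicit monomial count. For $E^2\times F$ and the CM cube you still owe the explicit spanning checks the paper supplies. First, the cross-divisor class between the two copies of $E$, wedged with $H^1(F,\mathbb{Q})$, fills the level-one part of $H^1(E)\otimes H^1(E)\otimes H^1(F)$. Second, $\operatorname{NS}(E^3)_{\mathbb{C}}=H^{1,1}$ makes every monomial of type $(2,1)$ or $(1,2)$ a divisor product.
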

\begin{proof}
Corollary~\ref{cor:classical} and Theorem~\ref{thm:walkerquotient} identify $\Lambda_1(A)$ with the integral coniveau lattice of $H^3(A)$. Thus it suffices to compute $\dim_{\mathbb{Q}}N^1H^3$. Cup product with a divisor class is supported on a divisor, so the right-hand side of \eqref{eq:divisorconiveau} is contained in the left-hand side. The reverse inclusion will follow from the Hodge groups described in \cite[Sections 1.2, 2.1, 2.3 and Corollary 3.9]{MZ}. A coniveau-one sub-Hodge structure in degree three has only types $(2,1)$ and $(1,2)$.

For simple $A$ with $\operatorname{End}^0(A)=\mathbb{Q}$, its Hodge group is $\operatorname{Sp}_6$. If $U_A=H^1(A,\mathbb{Q})$ and $\theta$ is an ample divisor class, the symplectic decomposition is
\[
 H^3(A,\mathbb{Q})=\theta\wedge U_A\oplus P^3,
 \qquad\dim(\theta\wedge U_A)=6,\quad\dim P^3=14.
\]
The primitive summand $P^3$ is irreducible and contains $H^{3,0}$. A nonzero projection of $N^1H^3$ onto it would be all of $P^3$, contradicting the Hodge types of coniveau one. Hence $N^1H^3=\theta\wedge U_A$. The divisor invariant subspace is $\mathbb{Q}\theta$, so $\varrho=1$. Such varieties exist, for example as very general principally polarized abelian threefolds.

For pairwise nonisogenous non-CM elliptic curves, write $U_i=H^1(E_i,\mathbb{Q})$. Their product Hodge group is $\operatorname{SL}_2^3$, and
\[
 H^3(A,\mathbb{Q})=
 \bigoplus_{i\neq j}\bigl(H^2(E_i,\mathbb{Q})\otimes U_j\bigr)
 \oplus(U_1\otimes U_2\otimes U_3).
\]
The first part has dimension $12$ and is generated by the coordinate divisors cupped with $H^1(A)$. The final tensor product is irreducible and contains type $(3,0)$, so its projection from $N^1H^3$ is zero. The divisor invariant subspace has dimension $3$.

Suppose $A=E^2\times F$, where $E,F$ are nonisogenous non-CM elliptic curves. Set $U=H^1(E,\mathbb{Q})$ and $V=H^1(F,\mathbb{Q})$. Its Hodge group is $\operatorname{SL}(U)\times\operatorname{SL}(V)$, acting diagonally on the two copies of $U$; this follows from \cite[Section 1.2 and Corollary 3.9]{MZ}. The K\"unneth decomposition has a $12$-dimensional summand $B$ generated by the coordinate divisor classes cupped with $H^1(A)$, and the remaining summand decomposes as
\[
 U\otimes U\otimes V
 =\bigl(\operatorname{Sym}^2U\otimes V\bigr)
   \oplus\bigl(\textstyle\bigwedge^2U\otimes V\bigr).
\]
The first term is an irreducible six-dimensional Hodge structure containing type $(3,0)$, so its projection from $N^1H^3(A,\mathbb{Q})$ is zero. The second term is $V(-1)$ and is generated by the cross-divisor class between the two $E$ factors cupped with $H^1(F)$. It is disjoint from $B$. Hence the divisor-product subspace has dimension $12+2=14$ and equals $N^1H^3(A,\mathbb{Q})$. The three coordinate divisor classes and this one cross-divisor class are the degree-two Hodge invariants, giving $\varrho(A)=4$.

For the non-CM cube, let $U_E=H^1(E,\mathbb{Q})$ and $M=\mathbb{Q}^3$ of type $(0,0)$. The diagonal $\operatorname{SL}_2$ action gives
\[
 H^3(E^3,\mathbb{Q})=\bigwedge^3(U_E\otimes M)
 \cong\operatorname{Sym}^3U_E\oplus U_E(-1)^{\oplus8}.
\]
Indeed, the weights $3,1,-1,-3$ have multiplicities $1,9,9,1$; subtracting the character of $\operatorname{Sym}^3U_E$ leaves eight standard representations with the displayed Tate twist. The first summand has dimension $4$, is irreducible, and contains type $(3,0)$. The other has dimension $16$. Divisor classes form the six-dimensional space $(\bigwedge^2U_E)\otimes\operatorname{Sym}^2M$. To check that their products span the full second summand, choose bases $a_i,b_i$ of the three copies of $U_E$. The invariant divisor classes are spanned by
\[
 a_i\wedge b_i\ (1\leq i\leq3),\qquad
 a_i\wedge b_j-b_i\wedge a_j\ (i<j).
\]
The coordinate divisors times $H^1$ span the $12$ degree-three basis monomials containing both $a_i,b_i$ for some $i$. Modulo these, the cross terms span two independent differences among the three monomials with two $a$'s and one $b$, and two among the three with one $a$ and two $b$'s. They produce neither $a_1a_2a_3$ nor $b_1b_2b_3$. Thus the rank is $12+2+2=16$. The irreducibility argument excludes the remaining $\operatorname{Sym}^3U_E$ from coniveau one, proving \eqref{eq:divisorconiveau} and $\varrho=6$.

For the CM cube the Hodge group is the one-dimensional norm-one torus of the imaginary quadratic endomorphism field. Its degree-two invariant subspace has dimension $9$ and complexifies to all of $H^{1,1}$. The Lefschetz $(1,1)$ theorem identifies it with $\operatorname{NS}(E^3)_{\mathbb{Q}}$. Wedge products with $H^1$ span all of $H^{2,1}\oplus H^{1,2}$: every exterior monomial with two holomorphic letters and one antiholomorphic letter, or conversely, is a divisor product. The dimension is $18$, so the Hodge-type upper bound gives equality with $N^1H^3$.

Finally, the cohomology of an abelian threefold is the exterior algebra on $H^1$, with three dimensions in each of types $(1,0)$ and $(0,1)$. This gives the stated Hodge numbers and Betti number. Its underlying real Lie group is a six-dimensional compact torus.
\end{proof}

For these examples $\Lambda_0$ and $\Lambda_2$ are abstractly $\mathbb{Z}^6$, while $\Lambda_3=0$. The ordered rank families are therefore
\begin{equation}\label{eq:rankfamilies}
 \begin{gathered}
 (6,6,6,0),\qquad(6,12,6,0),\qquad(6,14,6,0),\\
 (6,16,6,0),\qquad(6,18,6,0).
 \end{gathered}
\end{equation}
These distinguish the coniveau-dependent cycle quotients despite identical Hodge numbers and diffeomorphic underlying manifolds. At this dimension, the distinctions are classical coniveau distinctions expressed through the quotient-lattice family, not additional lattices beyond the Walker factor.

\begin{corollary}\label{cor:noncmisogeny}
Let $A=E_1\times E_2\times E_3$, where all three elliptic curves are non-CM, and let $s$ be the number of their isogeny classes. Then
\[
 \operatorname{rank}\Lambda_1(A)=18-2s.
\]
Thus the three possible ranks in this class are $12$, $14$, and $16$.
\end{corollary}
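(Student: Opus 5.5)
The plan is to reduce to the three isogeny configurations already computed in Proposition~\ref{prop:abelianranks}, after checking that $\operatorname{rank}\Lambda_1(A)$ depends only on the isogeny pattern of the factors.

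\textbf{Step 1: rank is an isogeny invariant.} By Corollary~\ref{cor:classical} and Theorem~\ref{thm:walkerquotient}, $\operatorname{rank}\Lambda_1(A)=\dim_{\mathbb{Q}}N^1H^3(A,\mathbb{Q})$, since $A$ is a threefold and $p=1=n-2$. An isogeny $A\to A'$ induces an isomorphism of rational Hodge structures $H^3(A',\mathbb{Q})\to H^3(A,\mathbb{Q})$. Its inverse, up to the degree, is induced by the dual isogeny, which is an algebraic correspondence. Pullback and pushforward along finite surjective morphisms preserve coniveau. Hence $N^1H^3$ is identified rationally, and the rank depends only on the isogeny classes of the $E_i$.

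\textbf{Step 2: enumerate the possibilities for $s$.} Reordering the factors if needed, there are three cases.
\begin{itemize}
\item If $s=3$, then $A=E_1\times E_2\times E_3$ with pairwise nonisogenous factors. Proposition~\ref{prop:abelianranks} gives rank $12=18-6$.
\item If $s=2$, then $A$ is isogenous to $E^2\times F$ with $E,F$ nonisogenous and non-CM. Here $E_1\sim E_2$ gives an isogeny $E_1\times E_2\times E_3\to E_1\times E_1\times E_3$. The proposition gives rank $14=18-4$.
\item If $s=1$, then $A$ is isogenous to $E^3$ with $E$ non-CM. The rank is $16=18-2$.
\end{itemize}
The non-CM hypothesis is preserved under isogeny, because the endomorphism algebra $\operatorname{End}^0$ is an isogeny invariant. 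This ensures that the CM row of the table is never used. Combining the three cases gives $18-2s$, and the list $12,14,16$ follows.

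\textbf{Main obstacle.} The delicate point is Step 1: we must make sure that only rational coniveau enters, and not the integral lattice. Integrally, an isogeny can change $\Lambda_1$ by a finite index. The rank, however, is $\dim_{\mathbb{Q}}$ of $N^1H^3$ by the rank identity \eqref{eq:rankwalker}, since the residual lattice $K_1^{\mathrm{per}}$ vanishes. That identity comes from Corollary~\ref{cor:classical} through Theorem~\ref{thm:walkerquotient}. Transport of coniveau under correspondences is standard, so the argument closes.
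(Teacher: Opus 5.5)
Your proposal is correct and follows the same route as the paper's proof. You show that $\dim_{\mathbb{Q}}N^1H^3$, and hence $\operatorname{rank}\Lambda_1$ via Corollary~\ref{cor:classical}, is isogeny-invariant, and then read off the three non-CM rows of Proposition~\ref{prop:abelianranks} according as $s=3,2,1$. The only difference is that the paper inverts $f^*$ rationally by $f_*/\deg f$, whereas your ``dual isogeny'' inverse works only up to a nonzero scalar, which is equally sufficient.
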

\begin{proof}
For an isogeny $f:A\longrightarrow A'$, rational pullback preserves coniveau and has inverse $f_*/\deg f$. Thus $\dim_{\mathbb{Q}}N^1H^3$ and, by Corollary~\ref{cor:classical}, the quotient rank are isogeny-invariant. According as $s=3,2,1$, the variety is isogenous to one of the three non-CM product cases in Proposition~\ref{prop:abelianranks}. Their ranks give the formula.
\end{proof}

Variation of geometric coniveau rank in abelian threefold families is already exhibited in \cite[Example 6.5]{ACMV17}, with ranks $6$ and $18$ in characteristic zero. Here the additional feature is that four distinct rank strata are simultaneously analytically dense and individually nonconstructible in a single explicit family.

\begin{corollary}\label{cor:variation}
There is a smooth projective family of abelian threefolds with constant Hodge numbers for which each of the rank-$12$, rank-$14$, rank-$16$, and rank-$18$ loci of $s\longmapsto\operatorname{rank}\Lambda_1(X_s)$ is analytically dense and is not Zariski constructible. The rank function is nowhere continuous in the analytic topology. It is neither upper nor lower semicontinuous, in either the analytic or the Zariski topology, and is not a Zariski constructible function.
\end{corollary}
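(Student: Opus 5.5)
The family I would take is the product family $X_s=E_{s_1}\times E_{s_2}\times E_{s_3}$ over $S=U^3$. Here $U\subset\mathbb{A}^1$ is the $j$-line with $j=0,1728$ removed, and $E_{s_i}$ is a smooth projective Legendre-type or Weierstrass family with $j(E_{s_i})=s_i$. Passing to a finite étale cover of $U$ (for example a level structure), we may assume a genuine smooth projective family of elliptic curves exists. Then $X_s$ is a smooth projective family of abelian threefolds, and its Hodge numbers $\binom3a\binom3b$ are constant by Proposition~\ref{prop:abelianranks}.

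By Proposition~\ref{prop:abelianranks}, the rank is determined by isogeny and CM data, with isogeny invariance as in Corollary~\ref{cor:noncmisogeny}. In the non-CM case, $\operatorname{rank}\Lambda_1(X_s)=18-2\sigma(s)$, where $\sigma(s)$ is the number of isogeny classes among the three factors. If all three factors are CM and mutually isogenous, the rank is $18$, because $X_s$ is then isogenous to $E^3$ with $E$ CM. So the rank-$12$ locus is where the three factors are pairwise non-isogenous and non-CM. The rank-$14$ locus is where exactly two are isogenous and all are non-CM. The rank-$16$ locus is where all three are isogenous and non-CM. The rank-$18$ locus contains the points where all three factors are isogenous CM curves. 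Points with mixed CM and non-CM factors need no computation, since we only need to exhibit these four strata.

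Density in the analytic topology comes from classical facts. Each isogeny class of $j$-invariants (a Hecke orbit) is dense in $U$, and CM $j$-invariants are dense. First, the rank-$16$ stratum contains $\{(j_1,j_2,j_3)\}$ with all three $j_i$ in the Hecke orbit of a fixed non-CM $j$. Taking the union over all non-CM $j$ gives a set that is dense in $U^3$. Second, the rank-$18$ stratum contains the triples in the dense Hecke orbit of one CM point, and this set is dense in $U^3$. Third, for rank $14$, take $(j_1,j_2,j_3)$ with $j_1,j_2$ in one non-CM orbit and $j_3$ outside it and non-CM. Since only countably many values are excluded, such triples are dense. Fourth, the rank-$12$ locus is the complement of countably many proper closed subvarieties, namely the modular correspondences $\Phi_N(j_a,j_b)=0$ and the CM points. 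It is therefore dense, indeed comeager.

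Nonconstructibility is the step I expect to be the main obstacle. A constructible subset of the irreducible variety $S$ either contains a dense Zariski open set or is contained in a proper closed subset. The rank-$12$ locus is comeager, so it is not contained in a proper closed set. But its complement contains the rank-$18$ locus, which is analytically dense, so the rank-$12$ locus cannot contain a nonempty Zariski open set either. The same argument applies to each of the other three strata. Each is a countable union of proper subvarieties (modular curves, Hecke correspondences, CM points), so it contains no nonempty open set. Each is also analytically dense, hence Zariski dense, so it is not contained in a proper closed set. Hence none of the four strata is constructible, and the rank function is not a Zariski constructible function.

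The failures of continuity and semicontinuity follow from the same density. Every analytic open set meets the rank-$12$ and rank-$18$ strata, so the rank function is nowhere continuous. Upper semicontinuity fails because the sublevel set $\{\operatorname{rank}\le 12\}$ is not open. Lower semicontinuity fails because the superlevel set $\{\operatorname{rank}\ge 14\}$ is not open. Both statements hold in the analytic topology, and therefore also in the coarser Zariski topology. The care needed is in verifying cleanly that the relevant loci are countable unions of proper subvarieties.
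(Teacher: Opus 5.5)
Your proposal is correct and follows essentially the paper's argument. It uses a triple product of an elliptic family (the paper uses the Legendre family over $\mathbb{A}^1\setminus\{0,1\}$), density of a fixed non-CM or CM isogeny class to place ranks $12,14,16,18$ in every analytic open set, and the observation that no rank locus can contain a nonempty Zariski-open set because another rank locus is analytically dense. The one correction is that your descriptions of the rank-$12$, $14$, $16$ loci (and ``the rank-$12$ locus is the complement of \dots'') should be inclusions, not equalities; for instance, a CM factor times two nonisogenous non-CM factors still has rank $12$. Your argument only ever uses these inclusions, so nothing breaks.
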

\begin{proof}
Let $B=\mathbb{A}^1_{\mathbb{C}}\setminus\{0,1\}$ and let $\mathcal{E}\to B$ be the smooth projective Legendre family
\[
 y^2z=x(x-z)(x-tz).
\]
Over $B^3$, take the product of its three pullbacks. All fibers have the Hodge numbers in Proposition~\ref{prop:abelianranks}.

Fix a non-CM elliptic curve in this family. Its isogeny class in $B$ is countable and analytically dense. For density, in the upper half-plane the orbit of its period $\tau$ under $\operatorname{GL}_2^+(\mathbb{Q})$ is dense, since this group is dense in $\operatorname{GL}_2^+(\mathbb{R})$ and the latter acts transitively. If
\[
 \tau'=\frac{a\tau+b}{c\tau+d}
\]
for a positive-determinant rational matrix, then $(c\tau+d)(\mathbb{Z}+\mathbb{Z}\tau')$ is commensurable with $\mathbb{Z}+\mathbb{Z}\tau$ after clearing denominators. The elliptic curves are thus isogenous. Passing through the analytic period charts for the Legendre family proves the density assertion. Countability follows from the rational matrices and the finite number of level-two structures. The CM parameters are also countable.

Every nonempty analytic open subset of $B^3$ contains a product of three nonempty open sets. In each coordinate choose a parameter in this fixed dense non-CM isogeny class. The resulting threefold is isogenous to a non-CM cube, so its rank is $16$. The rank is invariant under isogeny here: finite pullback on rational cohomology preserves $N^1$ and is inverted by pushforward divided by the degree. In the same open set, choose non-CM parameters successively outside the countably many forbidden isogeny classes. They are pairwise nonisogenous and give rank $12$. To obtain rank $14$ in the same open set, choose the first two parameters in the fixed non-CM isogeny class and the third outside that class and outside the countable CM locus. Corollary~\ref{cor:noncmisogeny} applies. Finally, fix one CM elliptic curve. The same rational-matrix orbit argument makes its isogeny class dense. Choose all three parameters in this class; their product is isogenous to a CM cube and has rank $18$. Thus all four stated rank loci are analytically dense.

At a rank-$12$ point, arbitrarily near rank-$16$ points contradict upper semicontinuity. At a rank-$16$ point, arbitrarily near rank-$12$ points contradict lower semicontinuity. Zariski semicontinuity would imply analytic semicontinuity, since Zariski-open sets are analytically open.

Finally, on an irreducible algebraic variety a constructible integer-valued function is constant on some nonempty Zariski-open subset: among a finite collection of locally closed strata, one contains a dense open subset. But every nonempty Zariski-open subset of $B^3$ is analytically open and contains both ranks $12$ and $16$. This contradiction proves nonconstructibility of the function. Each of the four stated rank loci is Zariski dense, but contains no nonempty Zariski-open subset because a different rank locus is analytically dense. A dense constructible subset of an irreducible variety contains a nonempty Zariski-open subset; thus none of the four loci is constructible. Finally, every analytic neighborhood contains points of ranks $12$ and $18$. A continuous integer-valued function is locally constant at each point, so the rank function is nowhere continuous.
\end{proof}

\section{Infinite Lawson homology and homotopy of the closure}\label{sec:infinite}
The Lawson existence results were stated in \cite[Theorem 1.7 and Corollary 1.8]{HuAJ}, and the singular rational examples were constructed in \cite{HuIG}. For the smooth examples we supply the passage from Clemens's full Abel--Jacobi invariant to its top form projection. The differential identity uses standard normal-function horizontality \cite[Section 2.1]{MW}; finite algebraic spreads and a Baire argument supply simultaneous projected independence. Products are then controlled by finite dimensional period kernels. The applications below concern the associated closures and Chow quotients.

\subsection{The Hodge range and the projected invariant}
Set $m=k-2p\geq0$, so $k$ is the Lawson homological degree and $m$ is the homotopy degree. We reserve $\Phi_{p,k}$ for the cycle map and write $\operatorname{AJ}^{\mathrm{tr}}_{p,m}$ for the projected generalized Abel--Jacobi homomorphism of \cite[Section 3, equation (3.4)]{HuAJ}. For a subspace $F\subset H^j(X,\mathbb{C})$, denote the image of integration $H_j(X,\mathbb{Z})\to F^*$ by $\operatorname{per}_{F}H_j(X,\mathbb{Z})$. Then
\begin{equation}\label{eq:regulator}
 \operatorname{AJ}^{\mathrm{tr}}_{p,m}:L_pH_{2p+m}(X)_{\mathrm{hom}}\longrightarrow
 \frac{\bigl(F^{p+m+2}H^{2p+m+1}(X,\mathbb{C})\bigr)^*}
 {\operatorname{per}_{F^{p+m+2}}H_{2p+m+1}(X,\mathbb{Z})}.
\end{equation}
The period quotients in this section are abstract groups; their period subgroups need not be discrete and are not replaced by their closures.

For precision, the trace and homotopy trace in \cite[Remark 3.3 and Section 3.2]{HuAJ} have bidimensions $(p+i,p+m-i)$, $0\leq i\leq m$, and $(p+i,p+m+1-i)$, $0\leq i\leq m+1$, respectively. Thus integration on representatives uses $F^{p+m+1}$, whereas homotopy invariance uses $F^{p+m+2}$. Write $\mathcal{A}^j(X)$ for the smooth complex-valued differential forms of degree $j$, with their Hodge filtration. Filtered de Rham strictness on the compact K\"ahler manifold $X$ \cite[Sections 5--6]{DGMS} ensures that an exact closed form in $F^q\mathcal{A}^j(X)$ has a primitive in $F^q\mathcal{A}^{j-1}(X)$. Thus, if $\partial U=c_\varphi$ is the trace of a based cycle map, the functional $\omega\mapsto\int_U\omega$ depends only on its filtered cohomology class: Stokes's theorem reduces a change of representative to integration on $c_\varphi$, which vanishes by its stated bidimensions. Changing $U$ adds an integral period, not necessarily the number zero. Under a based homotopy, the difference of the bounding chains differs from the homotopy trace by an integral cycle. The homotopy trace annihilates the filtration in \eqref{eq:regulator}. In the offset notation $H^{p+a,p+b}$ of \cite{HuAJ}, one must therefore use $a+b=m+1$ with $a\geq m+1$ before descent and $a>m+1$ afterwards; $a,b$ need not both be nonnegative. In particular, the relevant threefold projection is $F^3H^3=H^{3,0}$, not the full classical space $F^2H^3=H^{3,0}\oplus H^{2,1}$.

For a smooth projective threefold $Y$, set
\begin{equation}\label{eq:topAJ}
 V_Y=H^{3,0}(Y)^*,\qquad P_Y=\operatorname{per}_{H^{3,0}}H_3(Y,\mathbb{Z}),\qquad
 \operatorname{AJ}_Y^{\mathrm{top}}:\operatorname{Griff}_1(Y)\longrightarrow V_Y/P_Y.
\end{equation}
Here $\operatorname{Griff}_1(Y)=Z_1(Y)_{\mathrm{hom}}/G_1(Y)$, and $\operatorname{AJ}_Y^{\mathrm{top}}$ is the restriction of the classical Abel--Jacobi functional to $H^{3,0}(Y)$. It vanishes on algebraically trivial cycles by the path-trace calculation just recalled. For $\partial U=W$, its value is represented by $b_W(\beta)=\int_U\beta$. The map \eqref{eq:topAJ} agrees with \eqref{eq:regulator} when $(p,m)=(1,0)$.

\subsection{The passage from Clemens's theorem}
Let $S$ be the open parameter space of smooth quintic hypersurfaces in $\mathbb{P}^4$, write $S^{\mathrm{an}}$ for its complex-analytic space, and let $f:\mathcal{Y}\to S$ be the universal family. Write $\mathcal{H}_{\mathbb{Z}}=R^3f_*\mathbb{Z}$, $\mathcal{H}_{\mathbb{Q}}=\mathcal{H}_{\mathbb{Z}}\otimes\mathbb{Q}$, and $\mathcal{H}=\mathcal{H}_{\mathbb{Z}}\otimes\mathcal{O}_S$, with Gauss--Manin connection $\nabla$, Hodge filtration $F^\bullet$, and cup-product pairing $\langle\ ,\ \rangle$. The corresponding intermediate Jacobian family is $\mathcal{H}/(F^2\mathcal{H}+\mathcal{H}_{\mathbb{Z}})$. The local holomorphic lift $v$ of a cycle normal function satisfies $\nabla v\in F^1\mathcal{H}\otimes\Omega_S^1$ \cite[Section 2.1, equations (2.3) and (2.9)]{MW}. For a local generator $\Omega$ of $F^3$,
\begin{equation}\label{eq:completevariation}
 F_s^2\mathcal{H}=\operatorname{span}_{\mathbb{C}}\{\Omega_s,(\nabla_\xi\Omega)_s:\xi\in T_sS\}.
\end{equation}
Indeed, contraction with $\Omega$ identifies $H^1(T_{Y_s})$ with $H^{2,1}(Y_s)$, and the derivative of the period map is the Kodaira--Spencer map followed by contraction \cite[Section 3(b), Proposition (3.6)]{GriffithsPeriods}. The embedded quintic deformations are complete: the restricted Euler sequence gives $H^1(T_{\mathbb{P}^4}|_{Y_s})=0$, and the normal sequence gives the required surjectivity. These statements remain valid after an \'{e}tale base change.

\begin{lemma}\label{lem:firstjet}
Let $v$ be a local holomorphic lift of a normal function on a connected base satisfying \eqref{eq:completevariation}, and let $\delta$ be a flat rational section. If $g=\langle v-\delta,\Omega\rangle$ vanishes identically, then the normal function is torsion on that connected neighborhood. More precisely, $g(s)=0$ and $dg(s)=0$ imply $v(s)-\delta(s)\in F_s^2\mathcal{H}$.
\end{lemma}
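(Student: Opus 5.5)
The plan is to differentiate $g=\langle v-\delta,\Omega\rangle$ along the base and combine horizontality with the spanning property \eqref{eq:completevariation}. Set $w=v-\delta$. Since $\delta$ is flat, $\nabla w=\nabla v\in F^1\mathcal{H}\otimes\Omega^1_S$. For a tangent vector $\xi$ at $s$, flatness of the cup-product pairing under Gauss--Manin gives
\[
 \xi g=\langle\nabla_\xi w,\Omega\rangle+\langle w,\nabla_\xi\Omega\rangle .
\]
The first term vanishes: $\nabla_\xi w\in F^1$ and $\Omega\in F^3$, and $F^1$ is orthogonal to $F^3$ in $H^3$ because the pairing on $H^3$ only pairs $F^a$ nontrivially with the complement of $F^{4-a}$. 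Hence $dg(s)=0$ amounts to $\langle w(s),(\nabla_\xi\Omega)_s\rangle=0$ for all $\xi\in T_sS$, and $g(s)=0$ amounts to $\langle w(s),\Omega_s\rangle=0$.

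By \eqref{eq:completevariation}, these two conditions say that $w(s)$ pairs to zero with all of $F^2_s\mathcal{H}$. The pairing on $H^3(Y_s,\mathbb{C})$ is nondegenerate, so the annihilator of $F^2$ has dimension $20\cdot$(suitable count) equal to $\dim H^3-\dim F^2$. We then identify this annihilator with $F^{4-2}=F^2$. We have $F^2\perp F^2$, since $F^2$ pairs nontrivially only with $H^{1,2}\oplus H^{0,3}$. Also $\dim F^2=\tfrac12\dim H^3$ for weight three with the symmetric Hodge numbers of the quintic. So $F^2$ is Lagrangian, and $(F^2)^\perp=F^2$. Therefore $w(s)\in F^2_s\mathcal{H}$, which is the precise assertion.

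For the first assertion, suppose $g\equiv0$ on the connected neighborhood. Then $g$ and $dg$ vanish at every point, so $v-\delta$ is a section of $F^2\mathcal{H}$ throughout. Consequently the normal function, which is the class of $v$ modulo $F^2\mathcal{H}+\mathcal{H}_{\mathbb{Z}}$, coincides with the class of the flat rational section $\delta$. Choose $N$ with $N\delta$ a flat integral section; then $N$ times the normal function is zero, so it is torsion.

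The only delicate point is the Lagrangian identification $(F^2)^\perp=F^2$, namely the Hodge-type orthogonality together with the dimension count. This is standard but should be stated for the polarized weight-three structure.
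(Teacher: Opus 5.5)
Your proposal is correct and follows the paper's proof essentially step for step. The steps are: horizontality together with $F^1\perp F^3$ gives $\partial_\xi g=\langle v-\delta,\nabla_\xi\Omega\rangle$; the spanning property \eqref{eq:completevariation} and the Lagrangian identity $(F^2)^\perp=F^2$ then give the pointwise membership; and clearing the denominators of $\delta$ gives torsion. The stray ``$20\cdot$(suitable count)'' in your dimension count is a slip and should simply read $\dim H^3-\dim F^2=\tfrac12\dim H^3$.
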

\begin{proof}
Horizontality and $\langle F^1,F^3\rangle=0$ give
\[
 \partial_\xi g=\langle v-\delta,\nabla_\xi\Omega\rangle.
\]
Vanishing of the value and all first derivatives therefore makes $v-\delta$ orthogonal to $F^2$ by \eqref{eq:completevariation}. Since $(F^2)^\perp=F^2$, the asserted membership follows. If $g$ vanishes identically, clear the denominators of the fixed rational flat section $\delta$; a nonzero integer then annihilates the normal function.
\end{proof}

\begin{theorem}\label{thm:projectedquintic}
There is a dense $G_\delta$ subset $S^\dagger\subset S^{\mathrm{an}}$ such that
\begin{equation}\label{eq:projectedrank}
 \dim_{\mathbb{Q}}\bigl(\operatorname{im}\operatorname{AJ}_{Y_s}^{\mathrm{top}}\otimes\mathbb{Q}\bigr)=\infty
 \qquad(s\in S^\dagger).
\end{equation}
The genericity asserted here is analytic, not a claim that the exceptional loci are algebraic.
\end{theorem}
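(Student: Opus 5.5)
We will deduce the projected statement from Clemens's construction, combined with Lemma~\ref{lem:firstjet}. Clemens's construction gives infinitely many rational curves on a general quintic whose Griffiths group classes are rationally independent. It is produced by normal functions of lines or conics in families. Concretely, take the rational curves of a fixed degree $d$ on a general quintic (finitely many, of a fixed number $N_d$ by Clemens--Katz). Over a suitable \'etale cover $S_d\to S^{\circ}$ of a Zariski open subset these become $N_d$ sections. Pairwise differences of these curves, as well as differences with multiples of a hyperplane-section line class, are homologically trivial one-cycles. They define normal functions $\nu_{d,j}$ on $S_d$. Clemens's theorem states that, for each finite collection of such cycles, any nontrivial rational linear combination has nonzero Abel--Jacobi image at a very general point. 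Equivalently, the normal function of that combination is not torsion.

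The key step is the passage from the full invariant to its $H^{3,0}$ projection. For a fixed nontrivial rational combination $\nu=\sum c_{d,j}\nu_{d,j}$ on a connected component of the fiber product of the finitely many relevant covers, choose a local lift $v$. The projected invariant at $s$ vanishes in $V_{Y_s}\otimes\mathbb{Q}/P_{Y_s}\otimes\mathbb{Q}$ exactly when $\langle v-\delta,\Omega\rangle(s)=0$ for some flat rational section $\delta$. Here we regard rational vanishing, accounting for multiples. For each of the countably many choices of $\delta$ (countable because $\mathcal{H}_{\mathbb{Q}}$ is locally $\mathbb{Q}^{204}$), the function $g_\delta=\langle v-\delta,\Omega\rangle$ is holomorphic on a simply connected chart. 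Lemma~\ref{lem:firstjet} shows that $g_\delta\not\equiv0$, since otherwise $\nu$ would be torsion, contradicting Clemens. Hence its zero set is a proper analytic subset, closed and nowhere dense. Covering $S^{\mathrm{an}}$ by countably many charts, over countably many combinations $\nu$, choices of $\delta$, and sheets of the covers, we obtain countably many closed nowhere dense sets. Their images in $S^{\mathrm{an}}$ remain closed and nowhere dense locally, because the covers are finite \'etale over a Zariski open set. We also discard the Zariski-closed complement of $S^\circ$. Baire's theorem then gives a dense $G_\delta$ set $S^\dagger$. At every $s\in S^\dagger$, each finite family of these cycles has rationally independent projected invariants, which proves \eqref{eq:projectedrank}.

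The main obstacle is the bookkeeping in this Baire argument. First, the statement must be about independence modulo $P_Y\otimes\mathbb{Q}$, not modulo its closure. Quantifying over all rational flat sections $\delta$ accounts for that. Second, the relevant loci must be closed in $S^{\mathrm{an}}$, and not merely in the \'etale covers. We handle this with properness of finite covers over $S^\circ$, together with the Zariski-closed excision. Third, we need the precise form of Clemens's independence, namely non-torsion of every nontrivial rational combination over the whole connected monodromy cover. Monodromy can permute the curves. We address this by working with all sheets simultaneously and noting that non-torsion is a property of the normal function on a connected component. It is therefore checked once, at one very general point supplied by Clemens.
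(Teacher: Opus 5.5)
Your skeleton matches the paper's proof: spread rigid curves over a finite \'etale cover, lift the normal functions on simply connected charts, exclude the countably many loci $\langle\sum c_iv_i-\delta,\Omega\rangle=0$ with $\delta$ flat rational, use Lemma~\ref{lem:firstjet} to see that none of these functions vanishes identically, and apply Baire. The gap is in the Clemens input, and as written the Baire step fails.

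You attribute to Clemens the claim that \emph{every} nontrivial rational combination of the pairwise differences of \emph{all} degree-$d$ rational curves, and of their differences with multiples of the plane section, has a non-torsion normal function. You then conclude that at $s\in S^\dagger$ ``each finite family of these cycles'' is independent. Literally this is false, since $(C_1-C_2)+(C_2-C_3)+(C_3-C_1)=0$. More substantively, Clemens supplies only, for each $t$, one tuple of cycles $W_i=5C_i-(\deg C_i)D$ with $t$ independent Abel--Jacobi values. His theorem says nothing about arbitrary combinations on the full monodromy (Galois) cover of all curves of degree $d$. For example, orbit sums descend to normal functions over an open subset of $S$ itself, and nothing you cite controls them. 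If any combination you index over is torsion on some component, the corresponding $g_\delta$ vanishes on a whole chart, and that excluded set is not nowhere dense. ``Working with all sheets simultaneously'' is what creates this problem; it does not solve it. Two smaller points: finiteness of all rational curves of degree $d$ on a general quintic is Clemens's conjecture, known only in low degree, so ``$N_d$ by Clemens--Katz'' cannot be cited as stated. Also, Clemens's cycles are not lines or conics.

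The repair is the paper's bookkeeping, indexed by $t$ rather than by degree.
\begin{itemize}
\item For each $t$, spread only the $t$ rigid curves of one Clemens tuple. The relative Hilbert scheme is \'etale there because $H^0(N)=H^1(N)=0$.
\item Take the component of the finite fiber product through that tuple, then its closure and normalization. Shrink to a dense Zariski open $S_t$ over which this gives a connected finite \'etale $h_t:T_t\to S_t$.
\item Non-torsion of every nonzero integer combination $\sum c_i\nu_i$ on $T_t$ follows from independence at the original tuple by the identity theorem. Apply it on the connected smooth locus of that irreducible spread, which contains the original tuple even if shrinking removed it.
\end{itemize}
Lemma~\ref{lem:firstjet} is then invoked only for these combinations.

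The zero loci live on charts, so their images in $S^{\mathrm{an}}$ need not be closed. Either take closures, since nowhere density suffices for Baire, or intersect with a compact exhaustion of the charts as the paper does. A point outside the union over all $t$, together with $S\setminus S_t$, then has projected rank at least $t$ for every $t$.
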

\begin{proof}
We use the geometric form of Clemens's theorem \cite[Theorem (0.2), Theorem (1.27), and Section 3, pp.~36--37]{Clemens}. For each positive integer $t$, it supplies $t$ independent full Abel--Jacobi values arising from cycles
\[
 W_i=5C_i-(\deg C_i)D,\qquad D=Y\cap\mathbb{P}^2,\qquad
 N_{C_i/Y}\cong\mathcal{O}_{\mathbb{P}^1}(-1)^{\oplus2}.
\]
These cycles are homologically trivial because $\deg D=5$ and $H_2(Y,\mathbb{Z})\cong\mathbb{Z}$. Since $H^0(N_{C_i/Y})=H^1(N_{C_i/Y})=0$, the relative Hilbert scheme is \'{e}tale at each chosen curve. Take the component through the chosen tuple in a finite fiber product of the relative Hilbert schemes, then its closure and normalization. After restricting to a dense Zariski open $S_t\subset S$, this gives a connected finite \'{e}tale surjection $h_t:T_t\to S_t$ carrying the $t$ cycle families. The restriction may also be chosen so that these curves are smooth and the plane-section family is flat.

The resulting normal functions $\nu_1,\ldots,\nu_t$ have no nonzero integer combination which is a torsion section. To see this even if the original point was removed on shrinking, work on the smooth open locus of the same irreducible spread where the cycle families are defined. This is connected and contains both the original tuple and a dense open subset of $T_t$. A torsion relation, after multiplication by its annihilating integer, extends by the identity theorem and would contradict the independence at the original tuple.

Cover $T_t$ by countably many simply connected coordinate neighborhoods where $h_t$ is biholomorphic onto its image and the normal functions lift to sections $v_i$. A rational dependence between their projected values is equivalent to
\begin{equation}\label{eq:dependencezero}
 \left\langle\sum_{i=1}^t c_iv_i-\delta,\Omega\right\rangle=0,
 \qquad 0\neq(c_1,\ldots,c_t)\in\mathbb{Z}^t,
 \qquad\delta\in\Gamma(\mathcal{H}_{\mathbb{Q}})\ \text{flat}.
\end{equation}
There are countably many such equations on each neighborhood. None vanishes identically, by Lemma~\ref{lem:firstjet} and the absence of torsion relations. Their zero sets are therefore proper analytic subsets.

Exhaust the neighborhoods by compact subsets. The image under $h_t$ of the intersection of any such compact subset with any zero set is closed and nowhere dense in $S^{\mathrm{an}}$: locally $h_t$ is biholomorphic. Include also $S\setminus S_t$. The excluded images form a countable union of closed nowhere-dense subsets. At a point outside this union, a lift to $T_t$ supplies $t$ independent projected values on the fiber. Take the union of the excluded sets for all $t\geq1$ and apply the Baire theorem. Its complement is a dense $G_\delta$ on which the projected rank is at least $t$ for every $t$, proving \eqref{eq:projectedrank}.
\end{proof}
Only a finite collection is spread at each stage of this proof. No common neighborhood for infinitely many curve families is required. Nor is injectivity of the projection from the full intermediate Jacobian on a fixed fiber asserted.

\subsection{Products and finite dimensional period kernels}
\begin{proposition}\label{prop:ellipticrepair}
Let $Y$ be a smooth projective threefold satisfying \eqref{eq:projectedrank}. For every elliptic curve $E$ and every nonzero class $[\gamma]\in H_1(E,\mathbb{Z})$, choose a loop $\gamma:S^1\to E$ based at the origin. The values of $\operatorname{AJ}^{\mathrm{tr}}_{1,1}$ on the based cycle loops
\[
 \varphi_{\gamma,W}(t)=([\gamma(t)]-[0])\times W,
 \qquad W\in Z_1(Y)_{\mathrm{hom}},
\]
span an infinite dimensional rational vector space after tensoring the target with $\mathbb{Q}$.
\end{proposition}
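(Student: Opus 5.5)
We compute $\operatorname{AJ}^{\mathrm{tr}}_{1,1}(\varphi_{\gamma,W})$ explicitly as a product of a period of $E$ and the projected threefold invariant $\operatorname{AJ}_Y^{\mathrm{top}}(W)$. Then we show that the period kernel which could destroy independence is only finite dimensional. Here $X=E\times Y$ has dimension $4$, and $(p,m)=(1,1)$. The target of \eqref{eq:regulator} is $(F^{4}H^{4}(X,\mathbb{C}))^*$ modulo integral periods of $H_4(X,\mathbb{Z})$. By K\"unneth, $F^4H^4(E\times Y)=H^{1,0}(E)\otimes H^{3,0}(Y)$, since $H^{0,0}(E)\otimes H^{4,0}(Y)=0$ for a threefold and $H^{1,1}(E)\otimes H^{3,0}(Y)$ lies in degree $5$. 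So the target is $(H^{1,0}(E)\otimes H^{3,0}(Y))^*$, a finite-dimensional space $V_E\otimes V_Y$, modulo $\operatorname{per}_{F^4}H_4(X,\mathbb{Z})$. By K\"unneth again, this period group is generated by $H_1(E)\otimes H_3(Y)$ and by the terms $H_0\otimes H_4$ and $H_2\otimes H_2$, which pair trivially with $F^4$.

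\textbf{Step 1: the formula.} If $\partial U=W$ on $Y$, the homotopy trace of $\varphi_{\gamma,W}$ is the $4$-cycle swept by $([\gamma(t)]-[0])\times W$, namely $\gamma\times W$ up to the constant part. A bounding chain is $\gamma\times U$. For $\eta\in H^{1,0}(E)$ and $\beta\in H^{3,0}(Y)$, Fubini gives
\[
 \int_{\gamma\times U}\eta\wedge\beta=\Bigl(\int_\gamma\eta\Bigr)\Bigl(\int_U\beta\Bigr).
\]
Hence $\operatorname{AJ}^{\mathrm{tr}}_{1,1}(\varphi_{\gamma,W})=\lambda_\gamma\otimes b_W$ modulo periods. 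Here $\lambda_\gamma=\int_\gamma\in V_E\cong\mathbb{C}$, which is nonzero because $[\gamma]\neq0$ and the periods of $E$ form a lattice. The functional $b_W$ represents $\operatorname{AJ}^{\mathrm{top}}_Y(W)$. This step also checks that the trace conventions of \cite{HuAJ} match these product chains. It is routine given the bidimension discussion above.

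\textbf{Step 2: the linear map.} Consider the map $\mu:V_Y\to V_E\otimes V_Y$ given by $b\mapsto\lambda_\gamma\otimes b$. It is a $\mathbb{C}$-linear isomorphism. It sends $P_Y$ into $\lambda_\gamma\otimes P_Y$, which lies in the period subgroup $P_X=\operatorname{per}(H_1(E)\otimes H_3(Y))$. So it induces $\bar\mu:V_Y/P_Y\to (V_E\otimes V_Y)/P_X$, with $\bar\mu(\operatorname{AJ}^{\mathrm{top}}_Y(W))$ equal to the value on $\varphi_{\gamma,W}$. Tensoring with $\mathbb{Q}$, it suffices to show that $\ker(\bar\mu\otimes\mathbb{Q})$ has finite $\mathbb{Q}$-dimension. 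Then \eqref{eq:projectedrank} forces the image of $\operatorname{im}\operatorname{AJ}_Y^{\mathrm{top}}\otimes\mathbb{Q}$ to be infinite dimensional.

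\textbf{Step 3: the kernel (main obstacle).} An element of the kernel is the class of some $b\in V_Y$ with $\lambda_\gamma\otimes b\in P_X$. Since $P_X$ is a finitely generated group, of rank at most $2b_3(Y)$, the set $\mu^{-1}(P_X)$ is a finitely generated subgroup of $V_Y$ containing $P_Y$. Its image in $V_Y/P_Y$ is therefore finitely generated. After tensoring with $\mathbb{Q}$, its dimension is at most $\operatorname{rank}P_X$. The subtlety is that these period quotients are non-discrete abstract groups, and tensoring with $\mathbb{Q}$ must be handled carefully. One must show that an element whose image is torsion, meaning $N\lambda_\gamma\otimes b\in P_X$, still lies in $\mu^{-1}(P_X\otimes\mathbb{Q}\cap\ldots)$. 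To do this, replace $P_X$ by its $\mathbb{Q}$-span inside $V_E\otimes V_Y$, which is still of finite $\mathbb{Q}$-dimension, and argue with rational hulls throughout. The kernel of $\bar\mu\otimes\mathbb{Q}$ is then contained in the image of the finite-dimensional $\mathbb{Q}$-space $\mu^{-1}(\mathbb{Q}P_X)$. Finally, an infinite-dimensional $\mathbb{Q}$-space modulo a finite-dimensional subspace remains infinite dimensional, which yields the claimed infinite span.
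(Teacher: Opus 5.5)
Your proposal is correct and is essentially the paper's proof: the value on $\varphi_{\gamma,W}$ is the nonzero period $\int_\gamma dz=a+b\tau$ times $b_W$, modulo the product periods $P_Y+\tau P_Y$. Since $(V/P)\otimes\mathbb{Q}=V/P_{\mathbb{Q}}$ for these divisible quotients, multiplication by that period has kernel $\mu^{-1}(\mathbb{Q}P_X)/\mathbb{Q}P_Y$ of finite $\mathbb{Q}$-dimension, which is exactly the paper's $\ker T_c\cong M_\tau/cP_{\mathbb{Q}}$.

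The only slips are cosmetic. The chain $\gamma\times W$ is the $3$-dimensional trace, not a $4$-cycle. Also $\partial(\gamma\times U)=-\gamma\times W$, so the bounding chain is $-\gamma\times U$ and the value acquires a harmless sign.
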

\begin{proof}
Write $E=\mathbb{C}/(\mathbb{Z}+\tau\mathbb{Z})$ with $\operatorname{Im}\tau>0$ and $[\gamma]=aA+bB$, where the $dz$-periods of $A,B$ are $1,\tau$. Set $V=V_Y$, $P=P_Y$, and $P_{\mathbb{Q}}=\operatorname{span}_{\mathbb{Q}}P$. The integral K\"unneth decomposition, as in \cite[pp.~1253--1254]{HuAJ}, identifies the projected product-period subgroup with $P+\tau P\subset V$. The trace is $\gamma\times W$, bounded by $-\gamma\times U$ when $\partial U=W$. Hence
\begin{equation}\label{eq:ellipticvalue}
 \operatorname{AJ}^{\mathrm{tr}}_{1,1}([\varphi_{\gamma,W}])
 =-(a+b\tau)b_W+(P+\tau P).
\end{equation}
In particular the Lawson class is homologically trivial. Set $c=a+b\tau\neq0$ and $M_\tau=P_{\mathbb{Q}}+\tau P_{\mathbb{Q}}$. Multiplication by $c$ induces
\[
 T_c:V/P_{\mathbb{Q}}\longrightarrow V/M_\tau,
 \qquad \ker T_c\cong M_\tau/cP_{\mathbb{Q}},
 \qquad \dim_{\mathbb{Q}}\ker T_c\leq\dim_{\mathbb{Q}}P_{\mathbb{Q}}<\infty.
\]
Here $cP_{\mathbb{Q}}\subset M_\tau$, and multiplication by $c$ is a rational linear automorphism of $V$. Since $(V/P)\otimes\mathbb{Q}=V/P_{\mathbb{Q}}$, the projected infinite dimensional image remains infinite dimensional under \eqref{eq:ellipticvalue}.
\end{proof}
This proves the conclusion of \cite[Proposition 4.3]{HuAJ} under its stated projected hypothesis and verifies that hypothesis for the quintics of Theorem~\ref{thm:projectedquintic}. The elliptic curve is arbitrary. No assertion that every individual non-torsion class survives is needed.

\begin{proposition}\label{prop:zerocycleproduct}
Suppose $Y$ satisfies \eqref{eq:projectedrank}, and let $M$ be a smooth connected projective $m$-fold with $H^{m,0}(M)\neq0$. For $m=0$ take $M$ to be a point. Then
\[
 \dim_{\mathbb{Q}}\bigl(\operatorname{im}\operatorname{AJ}^{\mathrm{tr}}_{1,m}(M\times Y)\otimes\mathbb{Q}\bigr)=\infty.
\]
\end{proposition}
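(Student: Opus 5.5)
We mimic the proof of Proposition~\ref{prop:ellipticrepair}, replacing the loop on $E$ by an $m$-parameter family of zero-cycles on $M$. For $m=0$ the statement is exactly \eqref{eq:projectedrank}, since $\operatorname{AJ}^{\mathrm{tr}}_{1,0}$ agrees with $\operatorname{AJ}^{\mathrm{top}}_Y$ by the remark after \eqref{eq:topAJ}. So assume $m\geq1$. We fix an integral class $[\Sigma]\in H_m(M,\mathbb{Z})$ whose pairing with some fixed nonzero $\eta\in H^{m,0}(M)$ is nonzero. Such a class exists because $\eta\neq0$ and integral classes span $H_m(M,\mathbb{C})$. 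Dold--Thom, i.e.\ \eqref{eq:zerocalculation}-type identifications $\pi_mZ_0(M)^0=H_m(M,\mathbb{Z})$, realizes $[\Sigma]$ by a based map $\sigma:S^m\to Z_0(M)^0$. For $W\in Z_1(Y)_{\mathrm{hom}}$ we form the based cycle map $\varphi_{\sigma,W}=\sigma\times W:S^m\to Z_1(M\times Y)$. Exterior product is continuous (Proposition~\ref{prop:functoriality}), and the class is homologically trivial because $W$ is.

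Next we compute the projected invariant. The trace of $\varphi_{\sigma,W}$ is $\Sigma\times W$, which is bounded by $\pm\Sigma\times U$ when $\partial U=W$. The relevant filtration is $F^{m+3}H^{m+4}(M\times Y)$. By K\"unneth, its only component pairing nontrivially with chains of bidimension type $\Sigma\times U$ is $H^{m,0}(M)\otimes H^{3,0}(Y)$; other summands such as $H^{m+1,\ast}\otimes H^{2,\ast}$ vanish on $\Sigma\times U$ for type reasons. Hence evaluating on $\eta\otimes\beta$ gives
\[
 \operatorname{AJ}^{\mathrm{tr}}_{1,m}([\varphi_{\sigma,W}])(\eta\otimes\beta)=\pm c\,b_W(\beta),\qquad c=\int_\Sigma\eta\neq0.
\]
Composing with restriction to the subspace $\eta\otimes H^{3,0}(Y)$ yields a homomorphism
\[
 \operatorname{im}\operatorname{AJ}^{\mathrm{tr}}_{1,m}\longrightarrow V_Y/P'
\]
for $P'$ the image of the integral periods $H_{m+5}(M\times Y,\mathbb{Z})$ on this subspace. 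By integral K\"unneth, $P'=\sum_j c_jP_Y$, where the $c_j$ range over the finitely many periods of $\eta$ on a basis of $H_m(M,\mathbb{Z})_{\mathrm{free}}$; torsion and the other K\"unneth summands contribute nothing after restriction.

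We then conclude as in Proposition~\ref{prop:ellipticrepair}. Put $M'=\sum_jc_jP_{Y,\mathbb{Q}}$, which is finite dimensional over $\mathbb{Q}$ because $P_Y$ is finitely generated. Multiplication by $c$ induces $T_c:V_Y/P_{Y,\mathbb{Q}}\to V_Y/M'$. The image $cP_{Y,\mathbb{Q}}$ lies in $M'$ since $c$ is an integral combination of the $c_j$, and $\ker T_c\cong M'/cP_{Y,\mathbb{Q}}$ is finite dimensional. The images of $b_W$ in $V_Y/P_{Y,\mathbb{Q}}$ span an infinite dimensional space by \eqref{eq:projectedrank}. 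Their images under $T_c$, which are the restricted values of $\operatorname{AJ}^{\mathrm{tr}}_{1,m}$ tensored with $\mathbb{Q}$, therefore remain infinite dimensional. Since the restriction map is a homomorphism, the full projected image is infinite dimensional as well.

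The main obstacle is the K\"unneth bookkeeping in the second step. We must check that the bounding-chain functional on $F^{m+3}$ really reduces to $\eta\otimes H^{3,0}$ up to well-defined integral periods. Justifying this will require the filtered-strictness and Stokes discussion preceding \eqref{eq:topAJ}, and identifying $P'$ exactly rather than only up to saturation, which is why we reference \cite[pp.~1253--1254]{HuAJ} as in the elliptic case.
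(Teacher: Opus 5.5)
Your argument is the paper's own: a Dold--Thom sphere $S^m\to Z_0(M)^0$ with nonzero $H^{m,0}$-period, exterior product with $W$, and the bounding chain $\pm\Sigma\times U$ giving the value $\pm c\,b_W(\beta)$. The conclusion comes from a finite-dimensional kernel modulo rational periods, and your slicing at a fixed $\eta$ inside $V_Y$ is equivalent to the paper's injection $v\mapsto a_\xi\otimes v$ into $H^{m,0}(M)^*\otimes V_Y$.

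Your degrees are off, however. For $p=1$ the target is $F^{m+3}H^{m+3}(M\times Y)=H^{m,0}(M)\otimes H^{3,0}(Y)$ modulo periods of $H_{m+3}(M\times Y,\mathbb{Z})$, not $H^{m+4}$ and $H_{m+5}$. So there are no further K\"unneth summands to eliminate. Nor is an exact identification of $P'$ needed: it suffices that its rational span is finite dimensional and contains $cP_{Y,\mathbb{Q}}$.
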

\begin{proof}
For $m\geq1$, choose $\xi\in H_m(M,\mathbb{Z})$ whose period functional $a_\xi\in H^{m,0}(M)^*$ is nonzero. Dold--Thom \cite{DoldThom} represents it by a based map $u:S^m\to Z_0(M)^0$, where the superscript $0$ denotes degree zero cycles. Exterior product gives $\varphi_{u,W}(t)=u(t)\times W$. The graphing construction is compatible with this product: its trace is $c_u\times W$, bounded by $(-1)^mc_u\times U$ for $\partial U=W$. Consequently
\begin{equation}\label{eq:zeroproductvalue}
 \operatorname{AJ}^{\mathrm{tr}}_{1,m}([\varphi_{u,W}])(\eta\wedge\beta)
 =(-1)^m a_\xi(\eta)b_W(\beta)\quad\text{modulo periods}.
\end{equation}
Here $\eta\in H^{m,0}(M)$ and $\beta\in H^{3,0}(Y)$. The form has type $(m+3,0)$, in the precise range of \eqref{eq:regulator}. Let $R_{\mathbb{Q}}\subset H^{m,0}(M)^*\otimes_{\mathbb{C}}V_Y$ be the rational span of the projected integral periods of $H_{m+3}(M\times Y,\mathbb{Z})$. This is finite dimensional over $\mathbb{Q}$ and contains $a_\xi\otimes (P_Y)_{\mathbb{Q}}$. Since $j_\xi:v\mapsto a_\xi\otimes v$ is injective, its induced map modulo periods has finite dimensional kernel
\[
 j_\xi^{-1}(R_{\mathbb{Q}})/(P_Y)_{\mathbb{Q}}.
\]
Equation~\eqref{eq:zeroproductvalue} proves the assertion. The case $m=0$ is \eqref{eq:projectedrank}.
\end{proof}
The sphere here maps into $Z_0(M)$, not necessarily into $M$. In particular, $M=E^m$ is permitted even though $\pi_m(E^m)=0$ for $m\geq2$. No spherical-period assertion for the underlying variety is used.

\begin{proof}[Proof of Theorem~\ref{thm:infiniteexistence}]
Choose one $Y$ from Theorem~\ref{thm:projectedquintic}. For $p\geq1$ and $m\geq0$, take $M=E^m$ in Proposition~\ref{prop:zerocycleproduct} and multiply its cycle families by $[\mathbb{P}^{p-1}]$. If $h$ is the hyperplane class, normalized by $\int_{\mathbb{P}^{p-1}}h^{p-1}=1$, the test form $\eta\wedge\beta\wedge h^{p-1}$ has type
\begin{equation}\label{eq:finaltype}
 (m,0)+(3,0)+(p-1,p-1)=(p+m+2,p-1).
\end{equation}
Its degree is $2p+m+1$ and its first Hodge index equals the cutoff in \eqref{eq:regulator}. Integral K\"unneth shows that the restricted period subgroup is unchanged by the added projective factor. The infinite dimensional regulator image therefore persists. This gives infinite rational rank in $L_pH_{2p+m}(X_{p,m})_{\mathrm{hom}}$, and hence infinite generation of $L_pH_{2p+m}(X_{p,m})$. The dimension is $p+m+2$. For $m=0$, one may alternatively apply the projective bundle theorem directly to Clemens's Griffiths group.

For $m\geq2$, Theorem~\ref{thm:main} identifies the Lawson group with $\pi_m\operatorname{CR}_p(X_{p,m})$. The split Hurewicz injection of Corollary~\ref{cor:hodgeclosure} then gives infinite rational rank in $H_m(\operatorname{CR}_p(X_{p,m}),\mathbb{Z})$ as well.
\end{proof}

\begin{corollary}\label{cor:simultaneous}
Fix $p\geq1$, $a\geq0$, an elliptic curve $E$, and the quintic $Y$ of Theorem~\ref{thm:infiniteexistence}. On the single smooth projective variety
\[
 X_{p,a}=E^a\times Y\times\mathbb{P}^{p-1}
\]
one has
\[
 \dim_{\mathbb{Q}}\bigl(L_pH_{2p+m}(X_{p,a})_{\mathrm{hom}}\otimes\mathbb{Q}\bigr)=\infty
 \qquad(0\leq m\leq a).
\]
For $2\leq m\leq a$, both $\pi_m\operatorname{CR}_p(X_{p,a})$ and $H_m(\operatorname{CR}_p(X_{p,a}),\mathbb{Q})$ have infinite rational rank. In particular, if $a\geq2$, this closure is not finitely dominated and has no finite CW homotopy type.
\end{corollary}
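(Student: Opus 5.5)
The plan is to reduce the corollary to Proposition~\ref{prop:zerocycleproduct}, applied with a different auxiliary variety for each $m$, and then transfer the result to the fixed $X_{p,a}$ by exterior product with a point-type cycle on the remaining elliptic factors. For $0\leq m\leq a$, write $E^a=E^m\times E^{a-m}$. Take $M=E^m$ in Proposition~\ref{prop:zerocycleproduct}; for $m=0$, $M$ is a point, and $H^{m,0}(E^m)\neq0$ otherwise. This gives infinitely many rationally independent values of $\operatorname{AJ}^{\mathrm{tr}}_{1,m}$ on classes $[\varphi_{u,W}]$ in $L_1H_{2+m}(E^m\times Y)_{\mathrm{hom}}$.

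Next I push these classes to $X_{p,a}$ by exterior product with the fixed algebraic cycle $[\{0\}\times\mathbb{P}^{p-1}]$ on $E^{a-m}\times\mathbb{P}^{p-1}$. This is continuous by Proposition~\ref{prop:functoriality}, so it induces a map on Lawson homology that preserves homological triviality. To see that the projected invariant persists, I pair with test forms $\eta\wedge\beta\wedge\omega\wedge h^{p-1}$. Here $\omega$ is the volume form of type $(a-m,a-m)$ on $E^{a-m}$, normalized so that $\int_{\{0\}}$ does not apply; the test form is in fact taken on the dual side. The cleaner route is to evaluate the functional on the product using the bounding chain $(-1)^m c_u\times U\times\{0\}\times\mathbb{P}^{p-1}$. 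It then suffices to pair with forms $\eta\wedge\beta\wedge\theta\wedge h^{p-1}$, where $\theta$ is a degree-zero class on $E^{a-m}$, namely the constant $1$. This yields type $(p+m+2,p-1)$, exactly as in \eqref{eq:finaltype}, with the cutoff of \eqref{eq:regulator} unchanged.

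The main obstacle is the check on the period subgroup. By integral Künneth, the integral periods of $H_{2p+m+1}(X_{p,a},\mathbb{Z})$ restricted to this subspace of forms lie in the periods coming from $H_{m+3}(E^m\times Y)\otimes H_0(E^{a-m})\otimes H_{2p-2}(\mathbb{P}^{p-1})$. This is a finitely generated group, so its rational span $R'_{\mathbb{Q}}$ is finite dimensional. Consequently the induced map from the infinite-dimensional image modulo $R_{\mathbb{Q}}$ on $E^m\times Y$ to the quotient modulo $R'_{\mathbb{Q}}$ has finite-dimensional kernel, by the same argument as at the end of Proposition~\ref{prop:zerocycleproduct}. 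Infinite dimensionality is therefore preserved for every $0\leq m\leq a$.

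For $2\leq m\leq a$, the isomorphism \eqref{eq:higher} of Theorem~\ref{thm:main} identifies $L_pH_{2p+m}(X_{p,a})$ with $\pi_m\operatorname{CR}_p(X_{p,a})$. The split injectivity of the Hurewicz map in Corollary~\ref{cor:hodgeclosure} then gives infinite rational rank of $H_m(\operatorname{CR}_p(X_{p,a}),\mathbb{Q})$. Finally, a finitely dominated space is a retract up to homotopy of a finite CW complex, so its rational homology is finite dimensional in each degree. This is impossible when $a\geq2$, since the argument applies with $m=2$, and so the closure is neither finitely dominated nor of finite CW homotopy type.
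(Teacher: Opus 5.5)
Your proof is correct, but it shows independence by a different mechanism from the paper's. The paper computes no invariant on $X_{p,a}$. It uses the origin insertion $i:X_{p,m}\to X_{p,a}$ and the projection $\operatorname{pr}:X_{p,a}\to X_{p,m}$, with $\operatorname{pr}\circ i=\operatorname{id}$. By Proposition~\ref{prop:functoriality}, the pushforwards then make the Lawson groups, their homologically trivial subgroups, and the closures $\operatorname{CR}_p$ (with their homotopy and homology) on $X_{p,m}$ direct summands of those on $X_{p,a}$. Theorem~\ref{thm:infiniteexistence} for $X_{p,m}$ then gives every assertion at once.

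Your classes are exactly $i_*$ of the paper's classes, since exterior product with $[\{0\}\times\mathbb{P}^{p-1}]$ equals $i_*\circ(-\times[\mathbb{P}^{p-1}])$. You instead prove their independence by evaluating $\operatorname{AJ}^{\mathrm{tr}}_{p,m}$ on $X_{p,a}$ against $\eta\wedge\beta\wedge1\wedge h^{p-1}$ and checking the periods by K\"unneth. This is valid. In fact, because $H_*(E^{a-m}\times\mathbb{P}^{p-1},\mathbb{Z})$ is free, the restricted period group coincides with that of $E^m\times Y$, so the kernel you bound is zero. Your route also gives the slightly stronger fact that the projected regulator on $X_{p,a}$ itself has infinite-dimensional image.

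The cost is that you rely on product compatibility of traces and bounding chains, and you must reapply Theorem~\ref{thm:main} and Corollary~\ref{cor:hodgeclosure} on $X_{p,a}$. The retraction argument is purely formal and transfers the closure statements directly.

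Finally, delete the discarded sentence about a volume form of type $(a-m,a-m)$ on $E^{a-m}$. Such a form has the wrong degree unless $a=m$, and it cannot be integrated over the point factor of your bounding chain; only $\theta=1$ works.
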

\begin{proof}
For $m\leq a$, insertion of the origins of the last $a-m$ elliptic factors and projection give proper morphisms
\[
 X_{p,m}\xrightarrow{i}X_{p,a}\xrightarrow{\operatorname{pr}}X_{p,m},
 \qquad\operatorname{pr}\circ i=\operatorname{id}.
\]
Their pushforwards are retractions on cycle spaces, Lawson homology, and rational equivalence closures, by Proposition~\ref{prop:functoriality}. They preserve the homologically trivial subgroups. Thus the indicated groups for $X_{p,m}$ are direct summands of those for $X_{p,a}$. Theorem~\ref{thm:infiniteexistence} proves the assertions. A finitely dominated space, meaning a homotopy retract of a finite CW complex, has finite dimensional rational homology in every degree. The infinite rank homology just obtained therefore excludes finite domination, and in particular finite CW homotopy type.
\end{proof}

\begin{proposition}\label{prop:infiniteone}
For a connected projective $X$, possibly singular, if $L_pH_{2p+1}(X)\otimes\mathbb{Q}$ is infinite dimensional, then at least one of $\pi_1\operatorname{CR}_p(X)$ and $\Lambda_p(X)$ has infinite rational rank. Finite rank of either group forces infinite rank of the other. In particular this alternative occurs on $X_{p,a}$ for $a\geq1$.
\end{proposition}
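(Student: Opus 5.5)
The argument is a rank count in the canonical exact sequence \eqref{eq:exact} of Theorem~\ref{thm:main}(iii),
\[
 0\longrightarrow\pi_1\operatorname{CR}_p(X)\longrightarrow L_pH_{2p+1}(X)\xrightarrow{\ \kappa_p\ }\Lambda_p(X)\longrightarrow0,
\]
which holds for every connected reduced projective $X$, singular or not. First I will tensor this sequence with $\mathbb{Q}$. Since $\mathbb{Q}$ is flat over $\mathbb{Z}$, the result is a short exact sequence of rational vector spaces, so
\[
 \dim_{\mathbb{Q}}L_pH_{2p+1}(X)_{\mathbb{Q}}=\operatorname{rank}\pi_1\operatorname{CR}_p(X)+\operatorname{rank}\Lambda_p(X),
\]
read as an identity of cardinals. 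When the left side is infinite, at least one summand on the right must be infinite. If one summand is finite, the other must be infinite, since a sum of two finite ranks is finite. This settles the general alternative and the statement that finite rank of either group forces infinite rank of the other. Countability from Theorem~\ref{thm:main}(iii) means ``infinite'' here is countably infinite, but that plays no role in the argument.

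For the application, I will verify the hypothesis on $X_{p,a}$ for $a\geq1$. Corollary~\ref{cor:simultaneous} with $m=1\leq a$ gives $\dim_{\mathbb{Q}}\bigl(L_pH_{2p+1}(X_{p,a})_{\mathrm{hom}}\otimes\mathbb{Q}\bigr)=\infty$. The subgroup $L_pH_{2p+1}(X_{p,a})_{\mathrm{hom}}$ injects into $L_pH_{2p+1}(X_{p,a})$. Rationalization preserves this injection, again by flatness of $\mathbb{Q}$. Hence the full rational Lawson group is infinite dimensional, and the general alternative applies.

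I expect no real obstacle. The one point to state carefully is that the proposition asserts only the alternative and not which branch occurs. Deciding the branch would require the residual-lattice description \eqref{eq:residuallattice} together with knowledge of $\ker\overline{\Phi}_{p,2p+1}$, and I will not claim that. The smooth-case remark is optional: rational injectivity fails on $X_{p,a}$ by the infinite-rank homologically trivial part, so Proposition~\ref{prop:comparisoncriterion} does not collapse the alternative. The proof itself uses only \eqref{eq:exact}.
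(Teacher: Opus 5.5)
Your proposal is correct and is essentially the paper's proof: tensor \eqref{eq:exact} with $\mathbb{Q}$, observe that an infinite-dimensional middle term cannot sit between two finite-dimensional outer terms, and verify the hypothesis on $X_{p,a}$ using the $m=1$ case of Corollary~\ref{cor:simultaneous}. Your use of flatness of $\mathbb{Q}$ to pass from the homologically trivial subgroup to the full Lawson group makes explicit a step that the paper leaves implicit.
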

\begin{proof}
Tensor \eqref{eq:exact} with $\mathbb{Q}$ and apply Corollary~\ref{cor:simultaneous}. An infinite dimensional middle term cannot have two finite dimensional outer terms.
\end{proof}
The generalized Abel--Jacobi target in \eqref{eq:regulator} is not asserted to be Hausdorff and is not the classical cycle level Jacobian. The construction does not provide a factorization through $\Lambda_p(X)$. Thus infinite Lawson rank does not, by itself, realize an infinite quotient lattice. Also, the degree $k=2p$ concerns $\pi_0Z_p(X)$, not $\pi_0\operatorname{CR}_p(X)$; the latter is always zero.

\subsection{Cycle level Walker kernels}
\begin{corollary}\label{cor:infinitewalker}
For the varieties $X_{p,a}$ of Corollary~\ref{cor:simultaneous}, with $a\geq1$, the cycle level Walker kernel satisfies
\[
 \dim_{\mathbb{Q}}\bigl(\pi_mW_p(X_{p,a})\otimes\mathbb{Q}\bigr)
 =\dim_{\mathbb{Q}}H_m(W_p(X_{p,a}),\mathbb{Q})=\infty
 \qquad(1\leq m\leq a).
\]
\end{corollary}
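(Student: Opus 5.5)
The plan is to read off the homotopy groups of $W_p=W_p(X_{p,a})$ from Corollary~\ref{cor:walkerkernel}, and then to pass from homotopy to rational homology. The case $m=1$ is the new input; the higher degrees are essentially already proved.

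\emph{Degrees $m\geq2$.} By \eqref{eq:walkerkernelhomotopy}, $\pi_mW_p=L_pH_{2p+m}(X_{p,a})$. Corollary~\ref{cor:simultaneous} makes this group of infinite rational rank for $2\leq m\leq a$.

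\emph{Degree $m=1$.} By \eqref{eq:walkerkernelhomotopy}, $\pi_1W_p=\ker\overline{\Phi}_{p,2p+1}$. This kernel contains the homologically trivial part $L_pH_{2p+1}(X_{p,a})_{\mathrm{hom}}=\ker\Phi_{p,2p+1}$. The difference between the two kernels consists of classes whose cycle image is torsion. That difference does not matter, since it suffices to find an infinite rank subgroup. Corollary~\ref{cor:simultaneous} with $m=1\leq a$ shows that $L_pH_{2p+1}(X_{p,a})_{\mathrm{hom}}\otimes\mathbb{Q}$ is infinite dimensional. Hence $\pi_1W_p\otimes\mathbb{Q}$ is infinite dimensional. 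Note that this is exactly where the Walker kernel behaves better than $\operatorname{CR}_p$. For $\operatorname{CR}_p$ the exact sequence \eqref{eq:exact} only gives the dichotomy of Proposition~\ref{prop:infiniteone}. For $W_p$, by contrast, the residual lattice $K_p^{\mathrm{per}}$ has been absorbed, and the fundamental group is the full homological kernel.

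\emph{Passage to homology.} I would argue via the product decomposition \eqref{eq:kernelproducts}, $W_p\cong\operatorname{CR}_p\times\mathcal{T}_p$, or more directly through Moore's splitting. $W_p$ is a path connected topological abelian group; it is a closed subgroup of $G_p$, so the proof of Corollary~\ref{cor:hodgeclosure} applies verbatim. That is, its topology is final for the compact sets $W_p\cap K_d$, so the hypothesis of \cite[Lemma 3.5.9]{LF04} holds. This gives a split injective Hurewicz map $\pi_mW_p\to H_m(W_p,\mathbb{Z})$ for every $m\geq1$. Tensoring with $\mathbb{Q}$, $H_m(W_p,\mathbb{Q})$ contains $\pi_mW_p\otimes\mathbb{Q}$ as a summand, so it is infinite dimensional for $1\leq m\leq a$.

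The homology statement asserts the value $\infty$ for both quantities. I therefore only need lower bounds, and the split injection supplies them. For $m=1$ one could alternatively use that $\pi_1$ is abelian, so that $H_1=\pi_1$ by the Hurewicz theorem.

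\emph{Main obstacle.} The delicate point is the Hurewicz splitting for $W_p$ rather than for $\operatorname{CR}_p$. I will check that the compact-filtration argument used in Corollary~\ref{cor:hodgeclosure} only needs $W_p$ to be closed in $Z_p$; this holds because $W_p$ is the kernel of the continuous map $\beta_W$. I will also check that it needs no property specific to rational equivalence.
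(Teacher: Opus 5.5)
Your proposal is correct and follows essentially the same route as the paper. You read off $\pi_mW_p$ from \eqref{eq:walkerkernelhomotopy}, use the infinite rank subgroup $L_pH_{2p+1}(X_{p,a})_{\mathrm{hom}}\subset\ker\overline{\Phi}_{p,2p+1}$ in degree one and Corollary~\ref{cor:simultaneous} in degrees $2\leq m\leq a$, and then pass to rational homology through the Hurewicz left inverse from the proof of \cite[Lemma 3.5.9]{LF04}; that lemma applies because $W_p$ is closed and inherits the compact final-topology filtration, which is exactly the paper's argument (your alternatives via \eqref{eq:kernelproducts} or $H_1=\pi_1$ for $m=1$ are valid but unnecessary).
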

\begin{proof}
In degree one, \eqref{eq:walkerkernelhomotopy} identifies $\pi_1W_p$ with $\ker\overline{\Phi}_{p,2p+1}$, which contains the infinite rank subgroup $L_pH_{2p+1}(X_{p,a})_{\mathrm{hom}}$. For $m\geq2$, the same equation identifies $\pi_mW_p$ with the corresponding Lawson group, and Corollary~\ref{cor:simultaneous} applies. The group $W_p$ is closed in $G_p(X)$ and inherits a compact final-topology filtration. The proof of \cite[Lemma 3.5.9, pp.~102--103]{LF04} therefore gives a left inverse to its Hurewicz map, proving the homology assertion. This does not identify $W_p$ with $\operatorname{CR}_p$ in degree one without the independent hypotheses in Section~\ref{sec:invariants}.
\end{proof}

\begin{remark}\label{rem:remaining}
The integral quotient in Theorem~\ref{thm:walkerquotient} does not determine its residual lattice $K_p^{\mathrm{per}}(X)$ for a general smooth projective variety. No fixed smooth projective example with an infinite rank $\Lambda_p(X)$ is constructed here. Infinite Lawson rank in degree $2p+1$ gives only the alternative in Proposition~\ref{prop:infiniteone}.

The abstract group $D_p(X)=\operatorname{CR}_p(X)/R_p(X)$ and the homotopy groups of the unclosed subgroup $R_p(X)$ are different questions. Neither density nor the global product decomposition identifies $R_p(X)$ with its closure up to weak equivalence. The normal-function argument establishes analytic residual genericity, not algebraicity of the additional exceptional loci. These limitations do not contradict the structure theorem: they distinguish the localization of higher homotopy in the closure from a computation of the Lawson groups themselves. In particular, the local contractibility established here does not imply finite CW homotopy type.
\end{remark}

\end{document}